\newtheorem{thm}{Theorem}
\newtheorem{prop}{Proposition}
\newtheorem{note}{Remark}
\newtheorem{corr}{Corollary}
\begin{document}

\title[Comparison of topologies \ldots]{Comparison of topologies on
  $*$-algebras of locally measurable operators}

\author{V. I. Chilin}

\author{M.~A.~Muratov}



\date{}

\keywords{von Neumann algebra, locally measurable operator, order
  topology, local convergence in measure}

\subjclass[2000]{46L50, 47D25, 47D40, 06F25, 06F30}

\begin{abstract}
  We consider the locally measure topology $t(\mathcal{M})$ on the $*$-algebra $LS(\mathcal{M})$ of
  all locally measurable operators affiliated with a von Neumann
  algebra $\mathcal{M}$.  We prove that $t(\mathcal{M})$ coincides
  with the $(o)$-topology on $LS_h(\mathcal{M})=\{T\in
  LS(\mathcal{M}): T^*=T\}$ if and only if the algebra $\mathcal{M}$
  is $\sigma$-finite and a finite  algebra. We study
  relationships between the topology $t(\mathcal{M})$ and various
  topologies generated by faithful normal semifinite traces on
  $\mathcal{M}$.
\end{abstract}

\maketitle

\section*{Introduction}

The development of integration theory for a faithful normal
semifinite trace $\tau$ defined on a von Neumann algebra
$\mathcal{M}$ has led to a need to consider the $*$-algebra
$S(\mathcal{M},\tau)$ of all $\tau$-measurable operators affiliated
with $\mathcal{M}$, see, e.g.,~\cite{Nels.}. This algebra is a
solid $*$-subalgebra of the $*$-algebra $S(\mathcal{M})$ of all
measurable operators affiliated with $\mathcal{M}$. The $*$-algebra
$S(\mathcal{M})$ was introduced by I.~Segal~\cite{Seg.} to describe
a ``noncom\-mutative version'' of the \hbox{$*$-algebra} of
measurable complex-valued func\-tions. If $\mathcal{M}$ is a
commutative von Neumann algebra, then $\mathcal{M}$ can be
identified with the $*$-algebra $L_\infty(\Omega,\Sigma,\mu)$ of all
essentially bounded measurable complex-valued functions defined on a
measure space $(\Omega,\Sigma,\mu)$ with a measure $\mu$ having the
direct sum property. In this case, the $*$-algebra $S(\mathcal{M})$
is identified with the $*$-algebra $L_0(\Omega,\Sigma,\mu)$ of all
measurable complex-valued functions defined on
$(\Omega,\Sigma,\mu)$~\cite{Seg.}.

The $*$-algebras $S(\mathcal{M},\tau)$ and $S(\mathcal{M})$ are
substantive examples of $EW^*$-algebras $E$ of closed linear
operators, affiliated with the von Neumann algebra $\mathcal{M}$,
which act on the same Hilbert space $\mathcal{H}$ as $\mathcal{M}$
and have the bounded part $E_b=E\cap \mathcal{B}(\mathcal{H})$
coinciding with $\mathcal{M}$~\cite{Dixon}, where
$\mathcal{B}(\mathcal{H})$ is the $*$-algebra of all bounded linear
operators on $\mathcal{H}$. A natural desire of obtaining a maximal
$EW^*$-algebra $E$ with $E_b=\mathcal{M}$ has led to a construction
of the $*$-algebra $LS(\mathcal{M})$ of all locally measurable
operators affiliated with the von Neumann algebra $\mathcal{M}$,
see, for example,~\cite{Yead1}. It was shown in~\cite{Chil.Z.2} that
any $EW^*$-algebra $E$ satisfying $E_b=\mathcal{M}$ is a solid
$*$-subalgebra of $LS(\mathcal{M})$.

In the case where there exists a faithful normal finite trace $\tau$
on $\mathcal{M}$, all three $*$-algebras $LS(\mathcal{M})$,
$S(\mathcal{M})$, and $S(\mathcal{M},\tau)$
coincide~\cite[\S\,2.6]{Mur_m}, and a natural topology that endows
these $*$-algebras with the structure of a topological $*$-algebra
is the measure topology induced by the trace
$\tau$~\cite{Nels.}.  If $\tau$ is a semifinite but not a finite
trace, then one can consider the $\tau$-locally measure topology $t_{\tau \,l}$ and the weak
$\tau$-locally measure $t_{w\tau\,l}$ ~\cite{Bik2}. However, in the case where
$\mathcal{M}$ is not of finite type, the multiplication is not
jointly continuous in the two variables with respect to these
topologies. In this connection, it makes sense to use, for the
$*$-algebra $LS(\mathcal{M})$, the locally measure topology $t(\mathcal{M})$,
which was defined in~\cite{Yead1} for
any von Neumann algebras and which endows $LS(\mathcal{M})$ with the
structure of a complete topological
$*$-algebra~\cite[\S\,3.5]{Mur_m}.

The natural partial order on the selfadjoint part
$LS_h(\mathcal{M})=\{T\in LS(\mathcal{M}): T^*=T\}$ permits to
define, on $LS_h(\mathcal{M})$, an order convergence,
$(o)$-convergence, and the generated by it $(o)$-topology
$t_o(\mathcal{M})$. If $\mathcal{M}$ is a commutative von Neumann
algebra, $t(\mathcal{M})\leqslant t_o(\mathcal{M})$ and
$t(\mathcal{M})=t_o(\mathcal{M})$ on $LS_h(\mathcal{M})$ if and only
if $\mathcal{M}$ is of $\sigma$-finite algebra ~\cite[Ch.\,V, \S\,6]{Sar.}.
For noncommutative von Neumann algebras, such relations between the
topologies $t(\mathcal{M})$ and $t_o(\mathcal{M})$ do not hold in
general. For example, if $\mathcal{M}=\mathcal{B}(\mathcal{H})$,
then $LS(\mathcal{M})=\mathcal{M}$ and the topology $t(\mathcal{M})$
coincides with the uniform topology that is strictly stronger than
the $(o)$-topology on $\mathcal{B}_h(\mathcal{H})$ if
$\dim(\mathcal{H})=\infty$~\cite[\S\,3.5]{Mur_m}.

In this paper, we study relations between the topology
$t(\mathcal{M})$ and the topologies $t_{\tau\,l}$, $t_{w\tau\,l}$,
and $t_o(\mathcal{M})$. We find that the topologies $t(\mathcal{M})$
and $t_{\tau\,l}$ (resp. $t(\mathcal{M})$ and $t_{w\tau\,l}$)
coincide on $S(\mathcal{M},\tau)$ if and only if $\mathcal{M}$ is finite,
and $t(\mathcal{M})=t_o(\mathcal{M})$ on
$LS_h(\mathcal{M})$ holds if and only if $\mathcal{M}$ is a
$\sigma$-finite and finite. Moreover, it turns out that
the topology $t_{\tau l}$ (resp. $t_{w \tau\,l}$) coincides with the
$(o)$-topology on $S_h(\mathcal{M}, \tau)$ only for finite traces.
We give necessary and sufficient conditions for the topology
$t(\mathcal{M})$ to be locally convex (resp., normable). We show
that $(o)$-convergence of sequences in $LS_h(\mathcal{M})$ and
convergence in the topology $t(\mathcal{M})$ coincide if and only if
the algebra $\mathcal{M}$ is an atomic and finite algebra.

We use the von Neumann algebra terminology, notations and results
from~\cite{SZ.,Take1.}, and those that concern the theory of
measurable and locally measurable operators from~\cite{Yead1,
  Mur_m}.

\section{Preliminaries}

Let $\mathcal{H}$ be a Hilbert space over the field $\mathbf{C}$ of
complex numbers, $\mathcal{B}(\mathcal{H})$ be the $*$-algebra of all
bounded linear operators on $\mathcal{H}$, $I$ be the identity operator
on $\mathcal{H}$, $\mathcal{M}$ be a von Neumann subalgebra of
$\mathcal{B}(\mathcal{H})$, $\mathcal{P}(\mathcal{M})=\{P\in
\mathcal{M}: P^2=P=P^*\}$ be the lattice of all projections in
$\mathcal{M}$, and $\mathcal{P}_{fin}(\mathcal{M})$ be the sublattice
of its finite projections. The center of a von Neumann algebra
$\mathcal{M}$ will be denoted by $\mathcal{Z}(\mathcal{M})$.

A closed linear operator $T$ affiliated with a von Neumann algebra
$\mathcal{M}$ and having everywhere dense domain
$\mathfrak{D}(T)\subset \mathcal{H}$ is called \textit{measurable}
 if there exists a sequence
$\{P_n\}_{n=1}^{\infty}\subset \mathcal{P}(\mathcal{M})$ such that
$P_n\uparrow I$, $P_n(\mathcal{H})\subset \mathfrak{D}(T)$, and
$P_n^\bot=I-P_n \in \mathcal{P}_{fin}(\mathcal{M})$, $n=1,2,\ldots$

A set $S(\mathcal{M})$ of all measurable operators is a $*$-algebra with identity $I$ over the field
$\mathbf{C}$ ~\cite{Seg.}. It is clear
that $\mathcal{M}$ is a $*$-subalgebra of $S(\mathcal{M})$.

A closed linear operator $T$ affiliated with
$\mathcal{M}$ and having an everywhere dense domain
$\mathfrak{D}(T)\subset \mathcal{H}$ is called \textit{locally
  measurable} with respect to $\mathcal{M}$ if there is a sequence
$\{Z_n\}_{n=1}^{\infty}$ of central projections in $\mathcal{M}$
such that $Z_n\uparrow I$ and $T Z_n \in S(\mathcal{M})$ for all
$n=1,2,\ldots$

The set $LS(\mathcal{M})$ of all locally measurable operators  with
respect to $\mathcal{M}$ is a $*$-algebra with identity $I$ over the
field $\mathbf{C}$ with respect to the same algebraic operations as
in $S(\mathcal{M})$~\cite{Yead1}. Here, $S(\mathcal{M})$ is a
$*$-subalgebra of $LS(\mathcal{M})$. If $\mathcal{M}$ is  finite,
 or if $\mathcal{M}$ is a factor, the algebras $S(\mathcal{M})$
and $LS(\mathcal{M})$ coincide.

For every $T \in S(\mathcal{Z}(\mathcal{M}))$ there exists a
sequence $\{Z_n\}_{n=1}^{\infty}\subset
\mathcal{P}(\mathcal{Z}(\mathcal{M}))$ such that $Z_n \uparrow I$
and $T Z_n \in \mathcal{M}$ for all $n=1,2,\ldots$ This means that
$T\in LS(\mathcal{M})$. Hence, $S(\mathcal{Z}(\mathcal{M}))$ is a
$*$-subalgebra of $LS(\mathcal{M})$, and
$S(\mathcal{Z}(\mathcal{M}))$ coincides with the center of the
$*$-algebra $LS(\mathcal{M})$.

For every subset $E\subset LS(\mathcal{M})$, the sets of all
selfadjoint (resp., positive) operators in $E$ will be denoted by
$E_h$ (resp., $E_+$). The partial order in $LS_h(\mathcal{M})$
defined by its cone $LS_+(\mathcal{M})$ is denoted by $\leqslant$.
For a net $\{T_\alpha\}_{\alpha\in A}\subset LS_h(\mathcal{M})$, the
notation $T_\alpha\uparrow T$ (resp., $T_\alpha\downarrow T$), where
$T \in LS_h(\mathcal{M})$, means that $T_\alpha\leqslant T_\beta$
(resp., $T_\beta\leqslant T_\alpha$) for $\alpha\leqslant\beta$ and
$T=\sup\limits_{\alpha \in A}T_\alpha$ (resp.,
$T=\inf\limits_{\alpha\in A}T_\alpha$).

We say that a net $\{T_\alpha\}_{\alpha\in A}\subset
LS_h(\mathcal{M})$ \textit{$(o)$-converges} to an operator $T\in
LS_h(\mathcal{M})$, denoted by $T_\alpha
\stackrel{(o)}{\longrightarrow}T$, if there exist nets
$\{S_\alpha\}_{\alpha\in A}$ and $\{R_\alpha\}_{\alpha\in A}$ in
$LS_h(\mathcal{M})$ such that $S_\alpha \leqslant T_\alpha \leqslant
R_\alpha$ for all $\alpha\in A$ and $S_\alpha\uparrow T$,
$R_\alpha\downarrow T$.

The strongest topology on $LS_h(\mathcal{M})$ for which
$(o)$-convergence implies its convergence in the topology is called
order topology, or the $(o)$-topology, and is denoted by
$t_o(\mathcal{M})$. If $\mathcal{M}=L_\infty(\Omega,\Sigma,\mu)$,
$\mu(\Omega)<\infty$, the $(o)$-convergence of sequences in
$LS_h(\mathcal{M})$ coincides with almost everywhere convergence ,
and convergence in the $(o)$-topology, $t_o(\mathcal{M})$, with
measure convergence  ~\cite[Ch.~III,~\S\,9]{Vul.}.

Let $T$ be a closed operator with dense domain $\mathfrak{D}(T)$ in
$\mathcal{H}$, $T=U|T|$ the polar decomposition of the operator $T$,
where $|T|=(T^*T)^{\frac{1}{2}}$ and $U$ is the  partial isometry in
$\mathcal{B}(\mathcal{H})$ such that $U^*U$ is the right support of
$T$. It is known that $T\in LS(\mathcal{M})$ if and only if $|T|\in
LS(\mathcal{M})$ and $U\in \mathcal{M}$~\cite[\S\,2.3]{Mur_m}. If
$T$ is a self-adjoint operator affiliated with $\mathcal{M}$, then
the spectral family of projections $\{E_\lambda(T)\}_{\lambda\in
  \mathbf{R}}$ for $T$ belongs to
$\mathcal{M}$~\cite[\S\,2.1]{Mur_m}.

Let us now recall the definition of the locally measure
topology. Let first $\mathcal{M}$ be a commutative von
Neumann algebra. Then $\mathcal{M}$ is $*$-isomorphic to the
$*$-algebra $L_\infty(\Omega,\Sigma,\mu)$ of all essentially bounded
measurable complex-valued functions defined on a measure space
$(\Omega,\Sigma,\mu)$ with the measure $\mu$ satisfying the direct
sum property (we identify functions that are equal almost
everywhere).  The direct sum property of a measure $\mu$ means that
the Boolean algebra of all projections of the $*$-algebra
$L_\infty(\Omega,\Sigma,\mu)$ is order complete, and for any nonzero
$ P\in \mathcal{P}(\mathcal{M})$ there exists a nonzero projection
$Q\leqslant P$ such that $\mu(Q)<\infty$.

Consider the $*$-algebra
$LS(\mathcal{M})=S(\mathcal{M})=L_0(\Omega,\Sigma,\mu)$ of all
measurable almost everywhere finite complex-valued functions defined
on $(\Omega,\Sigma,\mu)$ (functions that are equal almost everywhere
are identified).  On $L_0(\Omega,\Sigma,\mu)$, define a locally measure topology
$t(\mathcal{M})$, that is, the linear Hausdorff topology, whose base of neighborhoods around zero is given by

\begin{multline*}
  W(B,\varepsilon,\delta)= \{f\in\ L_0(\Omega,\, \Sigma,\, \mu)
  \colon
  \ \hbox{there exists a set} \ E\in \Sigma\
  \mbox{such that} \\
   E\subseteq B, \ \mu(B\setminus
  E)\leqslant\delta, \ f\chi_E \in L_\infty(\Omega,\Sigma,\mu), \
  \|f\chi_E\|_{{L_\infty}(\Omega,\Sigma,\mu)}\leqslant\varepsilon\},
\end{multline*}
where $\varepsilon, \ \delta >0$, $B\in\Sigma$, $\mu(B)<\infty$, and
$\chi(\omega)=\begin{cases} 1, \ \ \ \omega\in E, \\ 0, \ \ \ \omega
  \ \not\in \ E. \end{cases}$

Convergence of a net $\{f_\alpha\}$ to $f$ in the topology
$t(\mathcal{M})$, denoted by $f_\alpha
\stackrel{t(\mathcal{M})}{\longrightarrow}f$, means that $f_\alpha
\chi_B \longrightarrow f\chi_B$ in measure $\mu$ for any $B\in
\Sigma$ with $\mu(B)<\infty$. It is clear that the topology
$t(\mathcal{M})$ does not change if the measure $\mu$ is replaced
with an equivalent measure.
Denote by $t_h(\mathcal{M})$ the topology on $LS_h(\mathcal{M})$
induced by the topology $t(\mathcal{M})$ on $LS(\mathcal{M})$.

\begin{prop}
\label{p_o=tl}
If $\mathcal{M}$ is a commutative von Neumann algebra, then
$$
t_h(\mathcal{M})\leqslant t_o(\mathcal{M}).
$$
\end{prop}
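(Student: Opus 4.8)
The plan is to exploit the defining maximality of $t_o(\mathcal{M})$. Since $t_o(\mathcal{M})$ is by definition the strongest topology on $LS_h(\mathcal{M})$ for which $(o)$-convergence implies convergence, it suffices to check that $t_h(\mathcal{M})$ is itself a topology with this property, i.e. that $T_\alpha\stackrel{(o)}{\longrightarrow}T$ implies $T_\alpha\stackrel{t(\mathcal{M})}{\longrightarrow}T$. Indeed, granting this implication, every $t_h(\mathcal{M})$-open set $U$ is open for $t_o(\mathcal{M})$: any net $(o)$-converging to a point of $U$ is then $t_h(\mathcal{M})$-convergent, hence eventually in $U$. Consequently $t_h(\mathcal{M})\leqslant t_o(\mathcal{M})$. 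So the whole proof is reduced to proving the stated implication for nets.

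Under the identification $LS_h(\mathcal{M})=L_0^{\mathbf{R}}(\Omega,\Sigma,\mu)$, suppose $T_\alpha\stackrel{(o)}{\longrightarrow}T$ and fix the sandwiching nets from the definition: $S_\alpha\leqslant T_\alpha\leqslant R_\alpha$ with $S_\alpha\uparrow T$ and $R_\alpha\downarrow T$. Recalling that $t(\mathcal{M})$-convergence means $f_\alpha\chi_B\to f\chi_B$ in measure for every $B\in\Sigma$ with $\mu(B)<\infty$, I would fix such a $B$ and multiply through by $\chi_B$. Multiplication by the projection $\chi_B$ is the band projection onto the band $\{f\colon f=f\chi_B\}$, and band projections are order continuous; hence $S_\alpha\chi_B\uparrow T\chi_B$ and $R_\alpha\chi_B\downarrow T\chi_B$ in $L_0(B,\mu|_B)$, while still $S_\alpha\chi_B\leqslant T_\alpha\chi_B\leqslant R_\alpha\chi_B$. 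Since both $T_\alpha\chi_B$ and $T\chi_B$ lie in the order interval $[S_\alpha\chi_B,R_\alpha\chi_B]$, we get $|T_\alpha\chi_B-T\chi_B|\leqslant (R_\alpha\chi_B-T\chi_B)+(T\chi_B-S_\alpha\chi_B)$, so it is enough to show that each monotone net $R_\alpha\chi_B\downarrow T\chi_B$ and $S_\alpha\chi_B\uparrow T\chi_B$ converges in measure on the finite measure set $B$.

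This isolates the key lemma, which I expect to be the main obstacle: on a measure space of finite measure (with the direct sum property), a decreasing net $h_\alpha\downarrow 0$ in $L_0$ converges to $0$ in measure. The difficulty is precisely that the net need not be countable, so the order infimum in $L_0$ need not be a pointwise infimum, and the a.e.\ description of $(o)$-convergence for sequences recorded in the excerpt does not apply directly. I would argue by contradiction. If $h_\alpha\not\to 0$ in measure, then, using that $\mu(\{h_\alpha>\varepsilon\})$ is decreasing in $\alpha$, there are $\varepsilon,\delta>0$ with $\mu(A_\alpha)\geqslant\delta$ for all $\alpha$, where $A_\alpha=\{h_\alpha>\varepsilon\}$ gives a decreasing net of projections $\chi_{A_\alpha}$. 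Invoking the order completeness of the projection lattice (direct sum property) and the normality, i.e.\ order continuity, of the finite measure $\mu$ on it, I pass to $P=\inf_\alpha\chi_{A_\alpha}$ with $\mu(P)=\inf_\alpha\mu(A_\alpha)\geqslant\delta>0$. Since $P\leqslant\chi_{A_\alpha}$ for every $\alpha$ and $h_\alpha>\varepsilon$ on $A_\alpha$, one has $h_\alpha\geqslant\varepsilon P$ for all $\alpha$, so $\varepsilon P$ is a nonzero lower bound of $\{h_\alpha\}$, contradicting $h_\alpha\downarrow 0$. Applying this lemma to $R_\alpha\chi_B-T\chi_B\downarrow 0$ and to $T\chi_B-S_\alpha\chi_B\downarrow 0$ completes the squeeze, giving $T_\alpha\chi_B\to T\chi_B$ in measure for every finite measure $B$, hence $T_\alpha\stackrel{t(\mathcal{M})}{\longrightarrow}T$, and therefore $t_h(\mathcal{M})\leqslant t_o(\mathcal{M})$.
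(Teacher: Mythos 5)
Your proposal is correct and follows essentially the same route as the paper: reduce to showing that $(o)$-convergence implies $t(\mathcal{M})$-convergence, multiply by $\chi_B$ for $\mu(B)<\infty$, and squeeze between the monotone sandwiching nets. The only difference is that the paper simply asserts that $g_\alpha\chi_B\downarrow 0$ forces convergence in measure on $B$, whereas you supply the (genuinely needed, since the net may be uncountable) justification via order completeness of the projection lattice and normality of $\mu$ — a welcome filling-in of detail rather than a different argument.
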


\begin{proof}
  It sufficient to prove that any net $\{f_\alpha\}_{\alpha\in
    A}\subset LS_h(\mathcal{M})$, which $(o)$-converges to zero,
  also converges to zero with respect to the topology
  $t_h(\mathcal{M})$. Choose a net $\{g_\alpha\}_{\alpha\in
    A}\subset LS_h(\mathcal{M})$ such that $g_\alpha\downarrow 0$
  and $-g_\alpha \leqslant f_\alpha\leqslant g_\alpha$ for all
  $\alpha \in A$.

  Let $B \in \Sigma$ and $\mu(B)<\infty$ (we identify $\mathcal{M}$
  with $L_\infty(\Omega,\Sigma,\mu)$). Then
$$
-g_\alpha\chi_B \leqslant f_\alpha\chi_B\leqslant g_\alpha\chi_B, \
\alpha\in A,
$$
and, since $g_\alpha\chi_B\downarrow 0$, we have $g_\alpha
\chi_B\rightarrow 0$ in measure $\mu$. Consequently,
$f_\alpha\chi_B\rightarrow 0$ in measure $\mu$ and, hence, $f_\alpha
\stackrel{t_h(\mathcal{M})}{\longrightarrow}0$.
\end{proof}

Let now $\mathcal{M}$ be an arbitrary von Neumann algebra. Identify
the center $\mathcal{Z}(\mathcal{M})$ with the $*$-algebra
$L_\infty(\Omega,\Sigma,\mu)$, and $LS(\mathcal{Z}(\mathcal{M}))$
with the $*$-algebra $L_0(\Omega,\Sigma,\mu)$. Denote by
$L_+(\Omega,\, \Sigma,\, m)$ the set of all measurable real-valued
functions defined on $(\Omega,\Sigma,\mu)$ and taking values in the
extended half-line $[0,\, \infty]$ (functions that are equal almost
everywhere are identifed). It was shown
in~\cite{Seg.} that there exists a mapping $\mathcal{D}\colon
\mathcal{P}(\mathcal{M})\to L_+(\Omega,\Sigma,\mu)$ that possesses
the following properties:
\begin{itemize}
\item[(i)] $\mathcal{D}(P)=0$ if and only if $P=0$;
\item[(ii)]  $\mathcal{D}(P)\in L_0(\Omega,\Sigma,\mu)\Longleftrightarrow P\in \mathcal{P}_{fin}(\mathcal{M})$;
\item[(iii)] $\mathcal{D}(P\vee Q)=\mathcal{D}(P)+\mathcal{D}(Q)$ if
  $PQ=0$;
\item[(iv)] $\mathcal{D}(U^*U)=\mathcal{D}(UU^*)$ for any partial
  isometry $U\in \mathcal{M}$;
\item[(v)] $\mathcal{D}(ZP)=Z\mathcal{D}(P)$ for any $Z\in
  \mathcal{P}(\mathcal{Z}(\mathcal{M}))$ and $P\in
  \mathcal{P}(\mathcal{M})$;
\item[(vi)] if $\{P_\alpha\}_{\alpha\in A}, P\in
  \mathcal{P}(\mathcal{M})$ and $P_\alpha\uparrow P$, then
  $\mathcal{D}(P)=\sup\limits_{\alpha\in A}\mathcal{D}(P_\alpha)$.
\end{itemize}

A mapping $\mathcal{D}\colon \mathcal{P}(\mathcal{M})\to
L_+(\Omega,\Sigma,\mu)$ that satisfies properties (i)---(vi) is
called a \textit{dimension function} on $\mathcal{P}(\mathcal{M})$.

For arbitrary numbers $\varepsilon , \delta >0$ and a set $B\in
\Sigma$, $\mu(B)<\infty$, set
\begin{multline*}
  V(B,\varepsilon, \delta ) = \{T\in LS(\mathcal{M})\colon \
  \mbox{there exist} \ P\in \mathcal{P}(\mathcal{M}),\
  Z\in \mathcal{P}(\mathcal{Z}(\mathcal{M})),\\
   \mbox{such that} \ TP\in \mathcal{M},
  \|TP\|_{\mathcal{M}}\leqslant\varepsilon,
  \ Z^\bot \in W(B,\varepsilon,\delta), \
    \mathcal{D}(ZP^\bot)\leqslant\varepsilon Z\},
\end{multline*}
where $\|\cdot\|_{\mathcal{M}}$ is the $C^*$-norm on $\mathcal{M}$.

It was shown in~\cite{Yead1} that the system of sets
 $$
 \{\{T+V(B,\,\varepsilon,\,\delta)\}\colon \ T \in LS(\mathcal{M}),\
 \varepsilon, \ \delta >0,\ B\in\Sigma,\ \mu(B)<\infty\} \leqno (1)
$$
defines a linear Hausdorff  topology $t(\mathcal{M})$ on
$LS(\mathcal{M})$ such that sets~(1) form a neighborhood base of the
operator $T\in LS(\mathcal{M})$. Here, $(LS(\mathcal{M}),
t(\mathcal{M}))$ is a complete topological $*$-algebra, and the
topology $t(\mathcal{M})$ does not depend on a choice of the
dimension function $\mathcal{D}$.

The topology $t(\mathcal{M})$ is called a \textit{locally
  measure topology }~\cite{Yead1}.

We will need the following  criterion for convergence of nets
with respect to this topology.

\begin{prop}[{\cite[\S\,3.5]{Mur_m}}]
  \label{plm-spk1}
  \begin{itemize}
  \item[(i)] A net $\{P_\alpha\}_{\alpha\in A}\subset
    \mathcal{P}(\mathcal{M})$ converges to zero with respect to the
    topology $t(\mathcal{M})$ if and only if there is a net
    $\{Z_\alpha\}_{\alpha\in A}\subset
    \mathcal{P}(\mathcal{Z}(\mathcal{M}))$ such that $Z_\alpha
    P_\alpha\in \mathcal{P}_{fin}(\mathcal{M})$ for all $\alpha\in
    A$, $Z^\bot_\alpha
    \stackrel{t(\mathcal{Z}(\mathcal{M}))}{\longrightarrow} 0$, and
    $\mathcal{D}(Z_\alpha
    P_\alpha)\stackrel{t(\mathcal{Z}(\mathcal{M}))}{\longrightarrow}
    0$, where $t(\mathcal{Z}(\mathcal{M}))$ is the locally
    measure topology  on $LS(\mathcal{Z}(\mathcal{M}))$.
  \item[(ii)] A net $\{T_\alpha\}_{\alpha\in A} \subset
    LS(\mathcal{M})$ converges to zero with respect to the topology
    $t(\mathcal{M})$ if and only if $E^\bot_\lambda(|T_\alpha|)
    \stackrel{t(\mathcal{M})}{\longrightarrow} 0$ for any
    $\lambda>0$, where $\{E^\bot_\lambda(|T_\alpha|)\}$ is a
    spectral projection family for the operator $|T_\alpha|$.
  \end{itemize}
\end{prop}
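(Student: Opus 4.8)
The entire statement rests on a single comparison lemma that I would establish first. \textit{Lemma.} If $T\in LS(\mathcal{M})$ and $P\in\mathcal{P}(\mathcal{M})$ satisfy $TP\in\mathcal{M}$, $\|TP\|_{\mathcal{M}}\le\varepsilon$, and $\lambda>\varepsilon$, then $E^\bot_\lambda(|T|)\precsim P^\bot$ (that is, $E^\bot_\lambda(|T|)$ is equivalent to a subprojection of $P^\bot$), and the same holds after compression by any $Z\in\mathcal{P}(\mathcal{Z}(\mathcal{M}))$. To prove it I would use the spectral inequality $|T|^2\ge\lambda^2E^\bot_\lambda(|T|)$; compressing by $P$ makes both sides bounded and gives $\lambda^2\,PE^\bot_\lambda(|T|)P\le P|T|^2P$, whence $\lambda^2\|PE^\bot_\lambda(|T|)\|^2\le\|P|T|^2P\|=\|TP\|^2\le\varepsilon^2$, so $\|PE^\bot_\lambda(|T|)\|\le\varepsilon/\lambda<1$. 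Then $E^\bot_\lambda(|T|)P^\bot E^\bot_\lambda(|T|)\ge(1-\varepsilon^2/\lambda^2)E^\bot_\lambda(|T|)$, so the partial isometry in the polar decomposition of $E^\bot_\lambda(|T|)P^\bot$ has left support $E^\bot_\lambda(|T|)$ and right support dominated by $P^\bot$; property (iv) of $\mathcal{D}$ together with monotonicity from (iii) then yields $\mathcal{D}(ZE^\bot_\lambda(|T|))\le\mathcal{D}(ZP^\bot)$ for central $Z$, using also (v). Specialized to a projection $Q$ (for which $E^\bot_\mu(Q)=Q$, $0<\mu<1$), the lemma shows that $Q\in V(B,\varepsilon,\delta)$ is \emph{equivalent} to the existence of a central $Z$ with $Z^\bot\in W(B,\varepsilon,\delta)$ and $\mathcal{D}(ZQ)\le\varepsilon Z$; I will use this reformulation repeatedly.

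Granting the lemma, part (ii) is almost immediate in both directions. For the forward implication, fix $\lambda>0$ and a neighborhood with $\varepsilon<\lambda$; eventually $T_\alpha\in V(B,\varepsilon,\delta)$ produces $P,Z$ with $\|T_\alpha P\|\le\varepsilon$, $Z^\bot\in W(B,\varepsilon,\delta)$, $\mathcal{D}(ZP^\bot)\le\varepsilon Z$, and the lemma gives $\mathcal{D}(ZE^\bot_\lambda(|T_\alpha|))\le\mathcal{D}(ZP^\bot)\le\varepsilon Z$; by the projection reformulation this says exactly $E^\bot_\lambda(|T_\alpha|)\in V(B,\varepsilon,\delta)$, so $E^\bot_\lambda(|T_\alpha|)\to 0$. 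For the converse, given $(B,\varepsilon,\delta)$ I set $\lambda=\varepsilon$ and $P=E_\lambda(|T_\alpha|)$, so that $T_\alpha P\in\mathcal{M}$ and $\|T_\alpha P\|=\||T_\alpha|P\|\le\lambda=\varepsilon$; the hypothesis $E^\bot_\varepsilon(|T_\alpha|)\to 0$, read through the projection reformulation, supplies a central $Z$ with $Z^\bot\in W(B,\varepsilon,\delta)$ and $\mathcal{D}(ZP^\bot)=\mathcal{D}(ZE^\bot_\varepsilon(|T_\alpha|))\le\varepsilon Z$, giving $T_\alpha\in V(B,\varepsilon,\delta)$.

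Part (i) is the base case $|P_\alpha|=P_\alpha$. The converse direction I would do directly: take $P=P_\alpha^\bot$ and $Z=Z_\alpha\wedge\chi_{\{\mathcal{D}(Z_\alpha P_\alpha)\le\varepsilon\}}$, and check from $Z_\alpha^\bot\to 0$ and $\mathcal{D}(Z_\alpha P_\alpha)\to 0$ in $t(\mathcal{Z}(\mathcal{M}))$ that $Z^\bot\in W(B,\varepsilon,\delta)$ and $\mathcal{D}(ZP_\alpha)\le\varepsilon Z$ hold eventually. The forward direction is the delicate point. For each $(B,\varepsilon,\delta)$ with $\varepsilon<1$ the lemma gives, eventually, a central $Z$ with $Z^\bot\in W(B,\varepsilon,\delta)$ and $\mathcal{D}(ZP_\alpha)\le\varepsilon Z$, but this $Z$ depends on the neighborhood, whereas one must exhibit a single net $\{Z_\alpha\}$. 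I would avoid diagonalization by taking the canonical net $Z_\alpha=\chi_{\{\mathcal{D}(P_\alpha)\le 1/2\}}$ (the largest central projection on which $\mathcal{D}(P_\alpha)\le 1/2$), so that $Z_\alpha P_\alpha$ is automatically finite with $\mathcal{D}(Z_\alpha P_\alpha)=Z_\alpha\mathcal{D}(P_\alpha)$ by (v). Since $\mathcal{D}(ZP_\alpha)\le\varepsilon Z$ forces $Z\le\chi_{\{\mathcal{D}(P_\alpha)\le\varepsilon\}}$, and since for projections the family $W(B,\varepsilon,\delta)$ is downward closed and independent of $\varepsilon\in(0,1)$, one reads off that $Z_\alpha^\bot=\chi_{\{\mathcal{D}(P_\alpha)>1/2\}}\to 0$ and, by a short truncation estimate, that $\mathcal{D}(Z_\alpha P_\alpha)\to 0$ in $t(\mathcal{Z}(\mathcal{M}))$.

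The main obstacle is the comparison lemma in the genuinely affiliated (unbounded) setting: one must justify the spectral inequality and the support/polar-decomposition bookkeeping for $|T|\in LS_+(\mathcal{M})$ rather than for a bounded operator, and track central supports so the equivalence survives compression by $Z$. After that, everything reduces to translating between the neighborhoods $V(B,\varepsilon,\delta)$ and $W(B,\varepsilon,\delta)$ via properties (ii)--(v) of $\mathcal{D}$; the only other point demanding care is the net-assembly in (i), where replacing the neighborhood-dependent central projections by the single net $\{\chi_{\{\mathcal{D}(P_\alpha)\le 1/2\}}\}$ leans on the special behavior of $W$ on projections.
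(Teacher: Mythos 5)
The paper does not actually prove this proposition --- it is quoted from \cite[\S\,3.5]{Mur_m} and used as a black box --- so there is no in-house argument to measure yours against; I can only judge the proposal on its own terms, and it holds up. Your comparison lemma is sound: from $TP\in\mathcal{M}$, $\|TP\|_{\mathcal{M}}\leqslant\varepsilon$ one gets $|T|P=U^*TP\in\mathcal{M}$, hence $P|T|^2P=(|T|P)^*(|T|P)\in\mathcal{M}$ with norm at most $\varepsilon^2$, and the compressed spectral inequality $\lambda^2PE^\bot_\lambda(|T|)P\leqslant P|T|^2P$ gives $\|E^\bot_\lambda(|T|)P\|_{\mathcal{M}}\leqslant\varepsilon/\lambda<1$; the estimate $E^\bot_\lambda(|T|)P^\bot E^\bot_\lambda(|T|)\geqslant(1-\varepsilon^2/\lambda^2)E^\bot_\lambda(|T|)$ then pins the left and right supports of $E^\bot_\lambda(|T|)P^\bot$ as claimed, and properties (iii)--(v) of $\mathcal{D}$ carry the comparison through any central compression. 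Your reformulation of $Q\in V(B,\varepsilon,\delta)$ for a projection $Q$ (take $P=Q^\bot$ for the easy inclusion) is correct, with the caveat that the forward half requires $\varepsilon<1$, which you do respect where it matters. The net-assembly in part (i) via the canonical choice $Z_\alpha=\chi_{\{\mathcal{D}(P_\alpha)\leqslant 1/2\}}$ is a clean way to avoid the neighborhood-dependence of the central projections produced by the lemma, and the truncation estimate you sketch does close it: $\mathcal{D}(ZP_\alpha)\leqslant\varepsilon Z$ forces $\{\mathcal{D}(P_\alpha)>\varepsilon\}\subseteq\mathrm{supp}\,Z^\bot$, whose trace on $B$ is eventually at most $\delta$, and this simultaneously controls $Z_\alpha^\bot$ and $Z_\alpha\mathcal{D}(P_\alpha)$. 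The two items you flag as needing justification --- the inequality $|T|^2\geqslant\lambda^2E^\bot_\lambda(|T|)$ and its compression by $P$ inside $LS(\mathcal{M})$ --- are routine for the $*$-algebra $LS(\mathcal{M})$ and do not conceal a gap. In short: a correct and complete-in-outline proof of a statement the paper itself only cites.
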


It follows from Proposition~\ref{plm-spk1} that the topology
$t(\mathcal{M})$ induces the topology $t(\mathcal{Z}(\mathcal{M}))$
on $LS(\mathcal{Z}(\mathcal{M}))$; hence,
$S(\mathcal{Z}(\mathcal{M}))$ is a closed $*$-subalgebra of
$(LS(\mathcal{M}), t(\mathcal{M}))$.

It is clear that
$$
X\cdot V(B,\varepsilon,\delta)\subset V(B,\varepsilon,\delta)
$$
for any $X\in \mathcal{M}$ with the norm
$\|X\|_{\mathcal{M}}\leqslant 1$. Since
$V^*(B,\varepsilon,\delta)\subset
V(B,2\varepsilon,\delta)$~\cite[\S\,3.5]{Mur_m}, we have
$$
V(B,\varepsilon,\delta)\cdot Y \subset V(B,4\varepsilon, \delta)
$$
for all $Y\in \mathcal{M}$ satisfying $\|Y\|_{\mathcal{M}}\leqslant
1$. Hence,
$$
X\cdot V(B,\varepsilon,\delta)\cdot Y \subset V(B,4\varepsilon,
\delta) \leqno (2)
$$
for any $\varepsilon,\delta>0, \ B\in \Sigma$, $\mu(B)<\infty$, $X,Y
\in \mathcal{M}$ with $\|X\|_{\mathcal{M}}\leqslant 1$,
$\|Y\|_{\mathcal{M}}\leqslant 1$.

Since the involution is continuous in the topology $t(\mathcal{M})$,
the set $LS_h(\mathcal{M})$ is closed in
$(LS(\mathcal{M}),t(\mathcal{M}))$. The cone $LS_+(\mathcal{M})$ of
positive elements is also closed in
$(LS(\mathcal{M}),t(\mathcal{M}))$~\cite{Yead1}. Hence, for every
increasing (or decreasing) net $\{T_\alpha \}_{\alpha\in A}\subset
LS_h(\mathcal{M})$ that converges to $T$ in the topology
$t(\mathcal{M})$, we have that $T\in LS_h(\mathcal{M})$ and
$T=\sup\limits_{\alpha\in A}T_\alpha$ (resp.
$T=\inf\limits_{\alpha\in A}T_\alpha$)~\cite[Ch.~V, \S\,4]{Shef.}.

\section{Comparison of the topologies  $t(\mathcal{M})$ and $t_o(\mathcal{M})$}

Let $\mathcal{M}$ be an arbitrary von Neumann algebra,
$t_o(\mathcal{M})$ be the $(o)$-topology on $LS_h(\mathcal{M})$. As
before, $t_h(\mathcal{M})$ denotes the topology on
$LS_h(\mathcal{M})$ induced by the topology $t(\mathcal{M})$ on
$LS(\mathcal{M})$.

\begin{thm}
\label{t_(o)=tl}
The following conditions are equivalent:
\begin{itemize}
\item[(i)] $t_h(\mathcal{M})\leqslant t_o(\mathcal{M})$;
\item[(ii)] $\mathcal{M}$ is finite.
\end{itemize}
\end{thm}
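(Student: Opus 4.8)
The plan is to prove the two implications (ii)$\Rightarrow$(i) and (i)$\Rightarrow$(ii) separately, with the forward direction using a reduction to the commutative case via the central decomposition and the converse being established by contraposition, constructing an explicit net when $\mathcal{M}$ is not finite.

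For the implication (ii)$\Rightarrow$(i), I would suppose $\mathcal{M}$ is finite. First I would recall that when $\mathcal{M}$ is finite, $LS(\mathcal{M})=S(\mathcal{M})$ (stated in the Preliminaries), and more importantly that every finite von Neumann algebra carries a center-valued trace. The strategy is to reduce the problem to Proposition~\ref{p_o=tl}, which already gives $t_h\leqslant t_o$ in the commutative case. It suffices to show that any net $\{T_\alpha\}\subset LS_h(\mathcal{M})$ with $T_\alpha\stackrel{(o)}{\longrightarrow}0$ converges to $0$ in $t(\mathcal{M})$. So choose $g_\alpha\downarrow 0$ in $LS_h(\mathcal{M})$ with $-g_\alpha\leqslant T_\alpha\leqslant g_\alpha$. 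Using Proposition~\ref{plm-spk1}(ii), convergence of $\{T_\alpha\}$ to zero in $t(\mathcal{M})$ is equivalent to $E_\lambda^\bot(|T_\alpha|)\stackrel{t(\mathcal{M})}{\longrightarrow}0$ for every $\lambda>0$. The key monotonicity observation is that $0\leqslant T_\alpha\leqslant g_\alpha$ (after passing to $|T_\alpha|\leqslant g_\alpha$, which follows from the order sandwich) forces a comparison of spectral projections, namely $E_\lambda^\bot(|T_\alpha|)\leqslant E_\lambda^\bot(g_\alpha)$ for $\lambda>0$. Thus it is enough to prove $E_\lambda^\bot(g_\alpha)\stackrel{t(\mathcal{M})}{\longrightarrow}0$ for the decreasing net $g_\alpha\downarrow 0$. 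Here I would invoke Proposition~\ref{plm-spk1}(i): since $\mathcal{M}$ is finite, $\mathcal{P}_{fin}(\mathcal{M})=\mathcal{P}(\mathcal{M})$, so the finiteness condition on $Z_\alpha P_\alpha$ is automatic, and the problem collapses to the center. Because $g_\alpha\downarrow 0$, the decreasing spectral projections $E_\lambda^\bot(g_\alpha)$ decrease, and applying the dimension function $\mathcal{D}$ (which is central-valued and order continuous by property (vi)) together with the commutative case yields the required convergence on the center.

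For the converse (i)$\Rightarrow$(ii), I would argue by contraposition: assume $\mathcal{M}$ is not finite and construct a net $(o)$-converging to $0$ that does \emph{not} converge in $t(\mathcal{M})$. Since $\mathcal{M}$ is not finite, there is a nonzero properly infinite central summand; by cutting with a central projection I may assume $\mathcal{M}$ itself is properly infinite, which produces an infinite family of mutually orthogonal, mutually equivalent projections $\{Q_n\}$ with $\sum Q_n=I$ (a halving/matrix-unit argument). The idea is to use these to build selfadjoint operators whose spectral projections fail the criterion in Proposition~\ref{plm-spk1}. Concretely, I would set $T_n=Q_n$ or a suitable scaled version, arranged into a decreasing or order-bounded net; the order structure makes $T_n\stackrel{(o)}{\longrightarrow}0$ (for instance because the tail sums $\sum_{k\geqslant n}Q_k\downarrow 0$), while each $Q_n$ is an infinite projection, so $E_\lambda^\bot(T_n)$ contains an infinite projection and cannot go to zero in $t(\mathcal{M})$ by Proposition~\ref{plm-spk1}(i) — the dimension $\mathcal{D}(Q_n)$ is not in $L_0$ by property (ii). This exhibits a net that is $(o)$-null but not $t(\mathcal{M})$-null, contradicting $t_h\leqslant t_o$.

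The main obstacle I anticipate is the forward direction, specifically making rigorous the passage from the order bound $-g_\alpha\leqslant T_\alpha\leqslant g_\alpha$ to control of the spectral projections and then to convergence on the center. The spectral comparison $E_\lambda^\bot(|T_\alpha|)\leqslant E_\lambda^\bot(g_\alpha)$ requires care because $|T_\alpha|\leqslant g_\alpha$ must first be deduced from the order sandwich (this uses that $|T_\alpha|\leqslant \sup(T_\alpha^+,T_\alpha^-)\leqslant g_\alpha$-type estimates, which are not completely trivial for unbounded affiliated operators), and the order continuity needed for the final step leans on property (vi) of $\mathcal{D}$ combined with the finiteness hypothesis ensuring everything stays measurable. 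In the converse, the delicate point is verifying that the constructed net genuinely $(o)$-converges to $0$ in the order of $LS_h(\mathcal{M})$ — I would need to exhibit explicit dominating nets $S_\alpha\uparrow 0$ and $R_\alpha\downarrow 0$, which is where the orthogonality and the lattice completeness of $\mathcal{P}(\mathcal{M})$ are used.
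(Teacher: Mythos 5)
Your forward direction (ii)$\Rightarrow$(i) rests on two inequalities that fail for noncommuting operators, so the key step does not go through. First, the sandwich $-g_\alpha\leqslant T_\alpha\leqslant g_\alpha$ does \emph{not} imply $|T_\alpha|\leqslant g_\alpha$: already for $2\times 2$ matrices, $T=\mathrm{diag}(1,-1)$ and $g=\bigl(\begin{smallmatrix}2&\sqrt2\\ \sqrt2&2\end{smallmatrix}\bigr)$ satisfy $-g\leqslant T\leqslant g$ while $|T|=I\not\leqslant g$ (indeed $\det(g-I)=-1$). Second, even for $0\leqslant A\leqslant B$ the pointwise comparison $E_\lambda^\bot(A)\leqslant E_\lambda^\bot(B)$ of spectral projections is false; only a subequivalence, hence an inequality of dimension functions, survives. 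The paper's proof is constructed precisely to avoid both traps: it first passes to bounded operators via $X=(I+S_{\alpha_0})^{-1/2}$, applies the center-valued trace $\Phi$ to the sandwich (positivity and linearity give $-\Phi(Y_\alpha)\leqslant\Phi(X_\alpha)\leqslant\Phi(Y_\alpha)$), uses the Chebyshev-type bound $\mathcal{D}(E_\lambda^\bot(Y_\alpha))\leqslant\lambda^{-1}\Phi(Y_\alpha)$ only for the \emph{positive decreasing dominating net} $Y_\alpha$, and then transfers convergence to $X_\alpha$ through the auxiliary net $Z_\alpha=X_\alpha+Y_\alpha$ with $0\leqslant Z_\alpha\leqslant 2Y_\alpha$ --- never forming $|T_\alpha|$ and never comparing spectral projections in the operator order. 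Your sketch would need to be rebuilt along these lines.

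Your converse (i)$\Rightarrow$(ii) is close in spirit to the paper's but also has a gap: an infinite projection \emph{can} converge to zero in $t(\mathcal{M})$. For example, in $\mathcal{B}(\mathcal{H})\otimes L_\infty[0,1]$ with $\dim\mathcal{H}=\infty$, the infinite projections $P_n=I\otimes\chi_{[0,1/n]}$ converge to zero, because Proposition~\ref{plm-spk1}(i) permits the central cuts $Z_n=I\otimes\chi_{(1/n,1]}$, which make $Z_nP_n=0$. So ``each $Q_n$ is infinite, hence $Q_n\not\to 0$'' is not a valid inference without an additional central-support argument (it can be repaired: after halving, each $Q_n$ is properly infinite with full central support in the properly infinite summand, which rules out the central cuts). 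The paper sidesteps this entirely: from $Q_n=\sup_{j\geqslant n}P_j\downarrow 0$ and (i) it deduces $P_n=P_nQ_n\stackrel{t_h(\mathcal{M})}{\longrightarrow}0$ using relation~(2), then transports back through the partial isometries, $P_1=U_n^*P_nU_n\stackrel{t_h(\mathcal{M})}{\longrightarrow}0$, forcing $P_1=0$ --- no finiteness analysis of the individual projections is needed.
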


\begin{proof}
  $(i)\Rightarrow (ii)$. Suppose that $\mathcal{M}$ is not finite. Then there is a sequence of pairwise
  orthogonal and pairwise equivalent projections
  $\{P_n\}_{n=1}^\infty$ in $\mathcal{P}(\mathcal{M})$. Choose a
  partial isometry $U_n$ in $\mathcal{M}$ such that $U_n^*U_n=P_1, \
  \ U_nU_n^*=P_n, \ \ n=1,2,\ldots$ Set $Q_n=\sup\limits_{j\geqslant
    n}P_j$. Then $Q_n\in \mathcal{P}(\mathcal{M})$ and
  $Q_n\downarrow 0$. By condition $(i)$ we have $Q_n
  \stackrel{t_h(\mathcal{M})}{\longrightarrow} 0$. Since
  $P_n=P_nQ_n$, it follows from~$(2)$ that $P_n
  \stackrel{t_h(\mathcal{M})}{\longrightarrow} 0$. Again using $(2)$
  we get that $P_1=U_n^*P_nU_n
  \stackrel{t_h(\mathcal{M})}{\longrightarrow} 0$ and, hence,
  $P_1=0$, which is not true. Consequently, $\mathcal{M}$ is
  finite.

  $(ii)\Rightarrow (i)$. Let $\mathcal{M}$ be a finite von
  Neumann algebra, $\Phi: \mathcal{M}\mapsto
  \mathcal{Z}(\mathcal{M})$ a center-valued trace on
  $\mathcal{M}$~\cite[Ch.~V, \S\,2]{Take1.}. The restriction
  $\mathcal{D}$ of the trace $\Phi$ on $\mathcal{P}(\mathcal{M})$ is
  a dimension function on $\mathcal{P}(\mathcal{M})$. Let
  $\{T_\alpha\}_{\alpha\in A}\subset LS_h(\mathcal{M})$ and
  $T_\alpha \stackrel{(o)}{\longrightarrow}0$. Then there exists a
  net $\{S_\alpha\}_{\alpha\in A}$ in $LS_h(\mathcal{M})$ such that
  $S_\alpha\downarrow 0$ and $-S_\alpha \leqslant T_\alpha\leqslant
  S_\alpha$ for all $\alpha\in A$. Fix $\alpha_0\in A$ and set
  $X_\alpha=XT_\alpha X, \ Y_\alpha=X S_\alpha X$ for
  $\alpha\geqslant\alpha_0$, where $
  X=(I+S_{\alpha_0})^{-\frac{1}{2}} $. It is clear that $-I\leqslant
  -Y_\alpha\leqslant X_\alpha\leqslant Y_\alpha \leqslant I$ for
  $\alpha\geqslant\alpha_0$ and $Y_\alpha\downarrow 0$.
  Consequently, $-I\leqslant -\Phi(Y_\alpha)\leqslant
  \Phi(X_\alpha)\leqslant \Phi(Y_\alpha) \leqslant I$ and
  $\Phi(Y_\alpha)\downarrow 0$.

  Let $E_\lambda^\bot(Y_\alpha)=\{Y_\alpha > \lambda \}$ be a
  spectral projection for $Y_\alpha$ corresponding to the interval
  $(\lambda,+\infty), \ \lambda>0$. Since
$$
\mathcal{D}(E_\lambda^\bot(Y_\alpha))\leqslant
\frac{1}{\lambda}\Phi(Y_\alpha),
$$
it follows that $\mathcal{D}(E_\lambda^\bot(Y_\alpha))
\stackrel{(o)}{\longrightarrow}0$ in $\mathcal{Z}(\mathcal{M})$. By
Proposition~\ref{p_o=tl}, we have that
$$
\mathcal{D}(E_\lambda^\bot(Y_\alpha))
\stackrel{t(\mathcal{Z}(\mathcal{M}))}{\longrightarrow}0
$$
for all $\lambda>0$. Hence, Proposition~\ref{plm-spk1} gives that
$Y_\alpha \stackrel{t(\mathcal{M})}{\longrightarrow}0$.

Set $Z_\alpha=X_\alpha+Y_\alpha$. Repeating the previous reasoning
and using the inequality $0\leqslant Z_\alpha\leqslant 2Y_\alpha$ we
get that $Z_\alpha \stackrel{t(\mathcal{M})}{\longrightarrow}0$.
Consequently, $X_\alpha=Z_\alpha-Y_\alpha
\stackrel{t(\mathcal{M})}{\longrightarrow}0$ and, hence,
$T_\alpha=X^{-1}X_\alpha X^{-1}
\stackrel{t(\mathcal{M})}{\longrightarrow}0$. Thus,
$t_h(\mathcal{M})\leqslant t_o(\mathcal{M})$.
\end{proof}

\begin{note}
\label{r_(o)=tl}
In the proof of the implication $(i)\Rightarrow (ii)$ of
Theorem~\ref{t_(o)=tl}, it was shown that convergence to zero, in
the topology $t(\mathcal{M})$, of any sequence of projections in
$\mathcal{P}(\mathcal{M})$, which decreases to zero, implies that
$\mathcal{M}$ is finite.
\end{note}

Let us now find conditions that would imply that the topologies
$t_h(\mathcal{M})$ and $t_o(\mathcal{M})$ coincide on
$LS_h(\mathcal{M})$. Recall that a von Neumann algebra $\mathcal{M}$
is called $\sigma$-finite if any family of nonzero mutually
orthogonal projections in $\mathcal{P}(\mathcal{M})$ is at most
countable. It is known that the topology $t(\mathcal{M})$ on
$LS(\mathcal{M})$ is metrizable if and only if the center
$\mathcal{Z}(\mathcal{M})$ is $\sigma$-finite~\cite{Yead1}.

\begin{prop}
\label{p_tl>(o)}
If $\mathcal{Z}(\mathcal{M})$ is $\sigma$-finite, then
$t_o(\mathcal{M})\leqslant t_h(\mathcal{M}).$
\end{prop}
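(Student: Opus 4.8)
The plan is to establish $t_o(\mathcal{M})\leqslant t_h(\mathcal{M})$ by showing that every $t_o(\mathcal{M})$-closed subset of $LS_h(\mathcal{M})$ is $t_h(\mathcal{M})$-closed. Two consequences of the hypothesis drive the argument. First, since $\mathcal{Z}(\mathcal{M})$ is $\sigma$-finite, the topology $t(\mathcal{M})$, and hence the induced topology $t_h(\mathcal{M})$, is metrizable; I fix a translation-invariant metric, so that $t_h(\mathcal{M})$-closedness is equivalent to sequential $t_h(\mathcal{M})$-closedness and I may argue throughout with sequences. Second, by the very definition of the order topology, $(o)$-convergence implies $t_o(\mathcal{M})$-convergence, so any $t_o(\mathcal{M})$-closed set $F$ is automatically closed under $(o)$-limits: if $T_\alpha\in F$ and $T_\alpha\stackrel{(o)}{\longrightarrow}T$, then $T\in F$.

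The heart of the matter is a subsequence-splitting lemma: \emph{if $T_n\stackrel{t_h(\mathcal{M})}{\longrightarrow}T$, then some subsequence $(o)$-converges to $T$.} Granting this, the statement follows immediately. Indeed, let $F$ be $t_o(\mathcal{M})$-closed and let $T_n\in F$ with $T_n\stackrel{t_h(\mathcal{M})}{\longrightarrow}T$; passing to a subsequence with $T_{n_k}\stackrel{(o)}{\longrightarrow}T$ and using that $F$ is closed under $(o)$-limits, we obtain $T\in F$. Thus $F$ is sequentially $t_h(\mathcal{M})$-closed, hence closed, which is what we want.

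To prove the lemma I would set $S_n=T_n-T\in LS_h(\mathcal{M})$, so that $S_n\stackrel{t(\mathcal{M})}{\longrightarrow}0$. Since $|S_n|$ and $S_n$ have the same spectral tail projections $E^\bot_\lambda(\cdot)$, Proposition~\ref{plm-spk1}(ii) shows that $|S_n|\stackrel{t(\mathcal{M})}{\longrightarrow}0$ as well. Using metrizability I choose indices $n_1<n_2<\cdots$ so that the positive series $\sum_k |S_{n_k}|$ is absolutely convergent for the invariant metric; by completeness of $(LS(\mathcal{M}),t(\mathcal{M}))$ it then converges to some $R_1\in LS_+(\mathcal{M})$. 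Putting $R_k=\sum_{j\geqslant k}|S_{n_j}|$, the tails form a decreasing sequence with $R_k\stackrel{t(\mathcal{M})}{\longrightarrow}0$, and since a decreasing $t(\mathcal{M})$-convergent net has its limit as infimum, $R_k\downarrow 0$. Finally $-R_k\leqslant S_{n_k}\leqslant R_k$, because $R_k-|S_{n_k}|\geqslant 0$ while $|S_{n_k}|\geqslant \pm S_{n_k}$; hence $S_{n_k}\stackrel{(o)}{\longrightarrow}0$, that is, $T_{n_k}\stackrel{(o)}{\longrightarrow}T$.

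The one delicate point—and the place where $\sigma$-finiteness of the center is indispensable—is the passage to a summable subsequence: it relies on metrizability of $t(\mathcal{M})$ to furnish an invariant metric, on completeness to sum the series, and on Proposition~\ref{plm-spk1}(ii) to guarantee that replacing $S_n$ by $|S_n|$ preserves $t(\mathcal{M})$-convergence. I expect this to be the main obstacle precisely because $LS_h(\mathcal{M})$ is not a lattice for noncommutative $\mathcal{M}$, so the naive commutative device of dominating by $\sup_{j\geqslant k}|S_{n_j}|$ is unavailable; the summable-tail construction $R_k=\sum_{j\geqslant k}|S_{n_j}|$ is what replaces it and, once the dominating operators $R_k\downarrow 0$ are in hand, the order estimate and the definition of $(o)$-convergence conclude the proof with no further analysis.
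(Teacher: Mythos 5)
Your argument is correct and follows essentially the same route as the paper: reduce to sequences via metrizability of $t(\mathcal{M})$ (which follows from $\sigma$-finiteness of the center), extract a rapidly convergent subsequence, sum the absolute values so that the decreasing tails $R_k$ converge to $0$ in $t(\mathcal{M})$ and hence satisfy $R_k\downarrow 0$ by closedness of the cone, and conclude $(o)$-convergence of the subsequence from $-R_k\leqslant S_{n_k}\leqslant R_k$. The only cosmetic differences are that the paper deduces $|T_n|\stackrel{t(\mathcal{M})}{\longrightarrow}0$ from relation (2) and the polar decomposition rather than from Proposition~\ref{plm-spk1}(ii), and that you spell out, via closed sets, the final step from ``every convergent sequence has an $(o)$-convergent subsequence'' to the comparison of topologies, which the paper leaves implicit.
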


\begin{proof}
  Choose a neighborhood basis $\{V_k\}_{k=1}^\infty$ of zero in
  $(LS(\mathcal{M}), t(\mathcal{M}))$ such that
$$
V_{k+1}+V_{k+1}\subset V_k
$$
for all $k$.

Let $\{T_n\}_{n=1}^\infty\subset LS_h(\mathcal{M})$ and $T_n
\stackrel{t(\mathcal{M})}{\longrightarrow}0$. Using relation~$(2)$
and the polar decomposition $T_n=U_n|T_n|$ we see that $|T_n|
\stackrel{t(\mathcal{M})}{\longrightarrow}0$. Choose a subsequence
$|T_{n_k}|\in V_k$ and set $S_k=\sum\limits_{i=1}^k |T_{n_i}|$. It
is clear that $S_m-S_{k+1} \in V_k$ for $m>k$. Hence, there exists
an operator $S\in LS_h(\mathcal{M})$ such that $S_k
\stackrel{t(\mathcal{M})}{\longrightarrow}S$. The sequence
$R_k=S-\sum\limits_{i=1}^{k}|T_{n_i}|$ decreases and $R_k
\stackrel{t(\mathcal{M})}{\longrightarrow}0$. Since the cone
$LS_+(\mathcal{M})$ of positive elements is closed in
$(LS(\mathcal{M}),t(\mathcal{M}))$, we have $R_k\downarrow 0$ and
$$
-R_{k-1} \leqslant -|T_{n_k}| \leqslant T_{n_k} \leqslant |T_{n_k}|
\leqslant R_{k-1}.
 $$
 Consequently, $T_{n_k} \stackrel{(o)}{\longrightarrow}0$. Thus, for
 any sequence $\{T_n\}_{n=1}^\infty \subset LS_h(\mathcal{M})$,
 which converges to $T\in LS_h(\mathcal{M})$ in the topology
 $t(\mathcal{M})$, there exists a subsequence $T_{n_k}
 \stackrel{(o)}{\longrightarrow}T$. This means that
 $t_o(\mathcal{M})\leqslant t_h(\mathcal{M})$.
\end{proof}

We now describe a class of von Neumann algebras $\mathcal{M}$ for
which the topologies $t_o(\mathcal{M})$ and $t_h(\mathcal{M})$ coincide.

\begin{thm}
\label{t_t0=tl}
The following conditions are equivalent:
\begin{itemize}
\item [(i)] $\mathcal{M}$ is finite and $\sigma$-finite;
\item [(ii)] $t_o(\mathcal{M})=t_h(\mathcal{M})$.
\end{itemize}
\end{thm}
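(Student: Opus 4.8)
The plan is to establish the two implications separately, obtaining the equality $t_o(\mathcal{M})=t_h(\mathcal{M})$ as the conjunction of the two inequalities already isolated in Theorem~\ref{t_(o)=tl} and Proposition~\ref{p_tl>(o)}. For $(i)\Rightarrow(ii)$ I would argue that the two inequalities hold for separate reasons. Since $\mathcal{M}$ is finite, Theorem~\ref{t_(o)=tl} gives $t_h(\mathcal{M})\leqslant t_o(\mathcal{M})$. For the reverse inequality I would observe that $\sigma$-finiteness of $\mathcal{M}$ forces $\sigma$-finiteness of $\mathcal{Z}(\mathcal{M})$: any family of pairwise orthogonal nonzero projections in $\mathcal{P}(\mathcal{Z}(\mathcal{M}))$ is, in particular, such a family in $\mathcal{P}(\mathcal{M})$, hence at most countable. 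Proposition~\ref{p_tl>(o)} then yields $t_o(\mathcal{M})\leqslant t_h(\mathcal{M})$, and combining the two gives $(ii)$.

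For $(ii)\Rightarrow(i)$ the equality gives in particular $t_h(\mathcal{M})\leqslant t_o(\mathcal{M})$, so Theorem~\ref{t_(o)=tl} shows at once that $\mathcal{M}$ is finite. To obtain $\sigma$-finiteness I would argue by contradiction, assuming $\mathcal{M}$ is not $\sigma$-finite. Using the faithful normal center-valued trace $\Phi$ of the finite algebra $\mathcal{M}$, I would first reduce to the center: were $\mathcal{Z}(\mathcal{M})$ $\sigma$-finite it would carry a faithful normal state $\varphi_0$, and $\varphi_0\circ\Phi$ would be a faithful normal tracial state on $\mathcal{M}$, contradicting non-$\sigma$-finiteness of $\mathcal{M}$. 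Hence $\mathcal{Z}(\mathcal{M})$ is not $\sigma$-finite, and the commutative case \cite[Ch.\,V,\,\S\,6]{Sar.} (whose easy half is Proposition~\ref{p_o=tl}) gives the strict inequality $t(\mathcal{Z}(\mathcal{M}))<t_o(\mathcal{Z}(\mathcal{M}))$. Thus there is a net $\{f_\alpha\}\subset LS_h(\mathcal{Z}(\mathcal{M}))$ with $f_\alpha\stackrel{t(\mathcal{Z}(\mathcal{M}))}{\longrightarrow}0$ but $f_\alpha\not\stackrel{t_o(\mathcal{Z}(\mathcal{M}))}{\longrightarrow}0$.

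It remains to transport this net into $LS_h(\mathcal{M})$ and derive a contradiction. Since $t(\mathcal{M})$ induces $t(\mathcal{Z}(\mathcal{M}))$ on the center, the net satisfies $f_\alpha\stackrel{t_h(\mathcal{M})}{\longrightarrow}0$, and hence, by the standing hypothesis $t_h(\mathcal{M})=t_o(\mathcal{M})$, also $f_\alpha\stackrel{t_o(\mathcal{M})}{\longrightarrow}0$. The step that closes the argument is a lemma asserting that the center-valued trace, extended to a positive map $\Phi\colon LS_h(\mathcal{M})\to LS_h(\mathcal{Z}(\mathcal{M}))$, is order continuous, i.e.\ $g_\alpha\downarrow 0$ implies $\Phi(g_\alpha)\downarrow 0$. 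Granting this, $\Phi$ carries $(o)$-null nets to $(o)$-null nets and is therefore continuous from $(LS_h(\mathcal{M}),t_o(\mathcal{M}))$ to $(LS_h(\mathcal{Z}(\mathcal{M})),t_o(\mathcal{Z}(\mathcal{M})))$; since $\Phi$ fixes central elements, $f_\alpha=\Phi(f_\alpha)\stackrel{t_o(\mathcal{Z}(\mathcal{M}))}{\longrightarrow}0$, contradicting the choice of $\{f_\alpha\}$ and forcing $\mathcal{M}$ to be $\sigma$-finite.

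I expect the order continuity of $\Phi$ on the unbounded part $LS_h(\mathcal{M})$ to be the main obstacle, since there $\Phi$ a priori takes values in the extended positive cone $L_+(\Omega,\Sigma,\mu)$, and one must verify that $g_\alpha\downarrow 0$ yields $\Phi(g_\alpha)\downarrow 0$ genuinely in $LS_h(\mathcal{Z}(\mathcal{M}))$ rather than merely in $L_+(\Omega,\Sigma,\mu)$. I would dispose of this by reducing, through relation~$(2)$ and spectral truncation of the dominating net, to order-bounded nets on which $\Phi$ stays finite and normality of the center-valued trace applies directly; the passage from ``$\Phi$ preserves $(o)$-convergence'' to ``$\Phi$ is $t_o$-continuous'' is then the routine observation that the $(o)$-closed sets are exactly the $t_o$-closed sets.
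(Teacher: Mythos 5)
The first two thirds of your argument are sound and close to the paper's: $(i)\Rightarrow(ii)$ is exactly the conjunction of Theorem~\ref{t_(o)=tl} and Proposition~\ref{p_tl>(o)}, and in $(ii)\Rightarrow(i)$ the reduction ``$\mathcal{M}$ finite and not $\sigma$-finite $\Rightarrow$ $\mathcal{Z}(\mathcal{M})$ not $\sigma$-finite'' via $\varphi_0\circ\Phi$ is a correct contrapositive of the fact the paper cites from~\cite{Seg.}. The gap is in the final step, where you transport the failure of $t(\mathcal{Z}(\mathcal{M}))=t_o(\mathcal{Z}(\mathcal{M}))$ up to $LS_h(\mathcal{M})$ by means of an order-continuous extension $\Phi\colon LS_h(\mathcal{M})\to LS_h(\mathcal{Z}(\mathcal{M}))$. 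No such extension exists: already for a $II_1$ factor with trace $\tau$ there are $T\in S_+(\mathcal{M})$ with $\tau(T)=\infty$, so $\Phi$ is forced to take values in the extended cone; worse, taking $g_n=\frac{1}{n}T$ with $\tau(T)=\infty$ gives $g_n\downarrow 0$ while $\Phi(g_n)\equiv\infty$, so even extended-valued normality fails without a finiteness hypothesis on the net. Your proposed repair by truncation does not restore what the argument actually needs, for two reasons: continuity of $\Phi$ from $(LS_h(\mathcal{M}),t_o(\mathcal{M}))$ to $(LS_h(\mathcal{Z}(\mathcal{M})),t_o(\mathcal{Z}(\mathcal{M})))$ at $0$ is a statement about preimages $\Phi^{-1}(U)$ of whole neighborhoods, which contain unbounded operators of infinite trace, so $\Phi$ must be globally defined for the statement to make sense; and the truncation $T\mapsto XTX$ with $X=(I+S_{\alpha_0})^{-1/2}$ does not commute with $\Phi$, since $X$ is not central, so you cannot recover $\Phi(g_\alpha)$ from $\Phi(Xg_\alpha X)$. (The trick works in Theorem~\ref{t_(o)=tl} only because there one needs convergence in $t(\mathcal{M})$, which can be undone by relation~$(2)$; here you need a statement about the order topology itself.)

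What is actually needed at that point is that the topology $t_o(\mathcal{M})$ induces on the relevant central subalgebra a topology at least as strong as its own order topology, and the paper obtains this without any trace: it shows that the subalgebra $E_h\cong\mathbf{R}^\Delta$ is a \emph{regular sublattice} of $LS_h(\mathcal{M})$ (via a maximal abelian $*$-subalgebra $\mathcal{A}=S(\mathcal{N})$ containing $\mathcal{Z}(\mathcal{M})$), so that suprema and infima of subsets of $E_h$ computed in $LS_h(\mathcal{M})$ stay in $E_h$; hence a net in $E_h$ that $(o)$-converges in $LS_h(\mathcal{M})$ already $(o)$-converges in $E_h$, the $(o)$-closed subsets of $E_h$ are traces of $(o)$-closed subsets of $LS_h(\mathcal{M})$, and $t_o(\mathcal{M})$ induces $t_o(E_h)$. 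Combined with the observation that $t(\mathcal{M})$ induces the Tychonoff topology on $\mathbf{R}^\Delta$, the hypothesis $t_h(\mathcal{M})=t_o(\mathcal{M})$ forces the Tychonoff and order topologies on $\mathbf{R}^\Delta$ to coincide, whence $\Delta$ is countable by~\cite[Ch.~V, \S\,6]{Sar.}. If you replace your order-continuity lemma for $\Phi$ by this regular-sublattice argument, the rest of your proof goes through.
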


\begin{proof}
  The implication $(i)\Rightarrow (ii)$ follows from
  Theorem~\ref{t_(o)=tl} and Proposition~\ref{p_tl>(o)}.

  $(ii)\Rightarrow (i)$. If $t_o(\mathcal{M})=t_h(\mathcal{M})$,
  then the von Neumann algebra $\mathcal{M}$ is finite by
  Theorem~\ref{t_(o)=tl}. Let us show that the center
  $\mathcal{Z}(\mathcal{M})$ is $\sigma$-finite.

  Let $\{Z_j\}_{j\in \Delta}$ be a family of nonzero pairwise
  orthogonal projections in $\mathcal{P}(\mathcal{Z}(\mathcal{M}))$
  satisfying $\sup\limits _{j\in \Delta}Z_j=I$ and $\mu(Z_j)<\infty$
  (as before, we identify the commutative von Neumann algebra
  $\mathcal{Z}(\mathcal{M})$ with $L_\infty(\Omega,\Sigma,\mu)$ and
  $LS(\mathcal{Z}(\mathcal{M}))$ with $L_0(\Omega,\Sigma,\mu)$).
  Denote by $E$ a $*$-subalgebra in $L_0(\Omega,\Sigma,\mu)$ of all
  functions $f\in L_0(\Omega,\Sigma,\mu)$ satisfying
  $fZ_j=\lambda_jZ_j$ for some $\lambda_j\in \mathbf{C}, \ j\in
  \Delta$. It is clear that $E$ is $*$-isomorphic to the $*$-algebra
  $\mathbf{C}^\Delta=\{\{\lambda_j\}_{j\in \Delta}: \lambda_j \in
  \mathbf{C}\}$, and $E_h$ is isomorphic to the algebra
  $\mathbf{R}^\Delta=\{\{r_j\}_{j\in \Delta}: r_j \in \mathbf{R}\}$.
  Denote by $t$ the Tychonoff topology of coordinate convergence in
  $\mathbf{R}^\Delta$, and identify $E_h$ with $\mathbf{R}^\Delta$.
  If $f_\alpha \in E_h$, then $f_\alpha
  \stackrel{t(\mathcal{M})}{\longrightarrow}0$ if and only if
  $f_\alpha Z_j \stackrel{\mu}{\longrightarrow}0$ for all
  $j\in\Delta$. This means that the topology $t(\mathcal{M})$
  induces the Tychonoff topology $t$ on $E_h$.

  Let us show that any subset $G\subset E_h$, upper bounded in
  $LS_h(\mathcal{M})$, is upper bounded in $E_h$, and the least
  upper bounds for $G$ in $E_h$ and in
  $S_h(\mathcal{M})=LS_h(\mathcal{M})$ are the same.

  For any operator $T\in S_+(\mathcal{M})$ there exists a maximal
  commutative \hbox{$*$-sub}algebra $\mathcal{A}$ of
  $S(\mathcal{M})$ containing $\mathcal{Z}(\mathcal{M})$ and $T$.
  Since $\mathcal{M}$ is a finite von Neumann algebra,
  $\mathcal{N}=\mathcal{A}\cap \mathcal{M}$ is also a finite von
  Neumann algebra, and $\mathcal{A}=S(\mathcal{N})$. We also have
  that $\mathcal{Z}(\mathcal{M})\subset \mathcal{N}$. It is clear
  that $S_h(\mathcal{Z}(\mathcal{M}))$ is a regular sublattice of
  $S_h(\mathcal{N})$, that is, the least upper bounds and the least
  lower bounds of bounded subsets of $S_h(\mathcal{Z}(\mathcal{M}))$
  calculated in $S_h(\mathcal{N})$ and in
  $S_h(\mathcal{Z}(\mathcal{M}))$ coincide.

  Let $G\subset E_h$ and $S\leqslant T$ for all $S\in G$. Then there
  exists a least upper bound $\sup G$ in $S_h(\mathcal{N})$, which,
  since $S_h(\mathcal{Z}(\mathcal{M}))$ is regular, belongs to
  $S_h(\mathcal{Z}(\mathcal{M}))$. Since $E_h$ is a regular
  sublattice in $S_h(\mathcal{Z}(\mathcal{M}))$, $\sup G\in E_h$.
  Consequently, any net $\{S_\alpha\}\subset E_h$ that
  $(o)$-converges to $S$ in $S_h(\mathcal{M})$ will be
  $(o)$-convergent to $S$ in $E_h$. This means that the
  $(o)$-topology $t_o(\mathcal{M})$ in $S_h(\mathcal{M})$ induces
  the $(o)$-topology $t_o(E_h)$ in $E_h$. Since
  $t_o(\mathcal{M})=t_h(\mathcal{M})$, the Tychonoff topology $t$
  coincides with the $(o)$-topology in $\mathbf{R}^\Delta$.
  Consequently, the set $\Delta$ is at most countable~\cite[Ch.~V,
  \S\,6]{Sar.}, that is, $\mathcal{Z}(\mathcal{M})$ is a
  $\sigma$-finite von Neumann algebra. Since the von Neumann algebra
  $\mathcal{M}$ is finite, $\mathcal{M}$ is also a
  $\sigma$-finite algebra~\cite{Seg.}.
\end{proof}

\medskip

Proposition~\ref{p_tl>(o)} and Theorems~\ref{t_(o)=tl} and
~\ref{t_t0=tl} give the following.

\begin{corr}
\label{c_(o)<tl}
\begin{itemize}
\item [$(i)$] If $\mathcal{M}$ is a $\sigma$-finite von Neumann
  algebra but is not finite, then $t_o(\mathcal{M})<
  t_h(\mathcal{M})$.
\item [$(ii)$] If $\mathcal{M}$ is not a $\sigma$-finite von Neumann
  algebra but is finite, then
  $t_h(\mathcal{M})<t_o(\mathcal{M})$.
\end{itemize}
\end{corr}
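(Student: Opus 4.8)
The plan is to derive both assertions purely formally from the three results already established, since each reduces to combining one non-strict comparison of the topologies with the failure of the reverse comparison. The only preliminary observation needed is that if $\mathcal{M}$ is $\sigma$-finite, then its center $\mathcal{Z}(\mathcal{M})$ is $\sigma$-finite as well: every family of pairwise orthogonal nonzero projections in $\mathcal{P}(\mathcal{Z}(\mathcal{M}))$ is in particular such a family in $\mathcal{P}(\mathcal{M})$, hence at most countable.

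For part $(i)$, I would assume $\mathcal{M}$ is $\sigma$-finite but not finite. By the observation above $\mathcal{Z}(\mathcal{M})$ is $\sigma$-finite, so Proposition~\ref{p_tl>(o)} gives $t_o(\mathcal{M})\leqslant t_h(\mathcal{M})$. On the other hand, since $\mathcal{M}$ is not finite, Theorem~\ref{t_(o)=tl} shows that the reverse inequality $t_h(\mathcal{M})\leqslant t_o(\mathcal{M})$ fails. The two facts together force the comparison to be strict, i.e. $t_o(\mathcal{M})<t_h(\mathcal{M})$.

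For part $(ii)$, I would assume $\mathcal{M}$ is finite but not $\sigma$-finite. Finiteness together with Theorem~\ref{t_(o)=tl} yields $t_h(\mathcal{M})\leqslant t_o(\mathcal{M})$. If this inequality were an equality, then Theorem~\ref{t_t0=tl} would force $\mathcal{M}$ to be simultaneously finite and $\sigma$-finite, contradicting the hypothesis; hence the inequality is strict, $t_h(\mathcal{M})<t_o(\mathcal{M})$.

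The argument is essentially bookkeeping, and I do not expect a serious obstacle: the substantive content lives entirely in Theorems~\ref{t_(o)=tl} and~\ref{t_t0=tl} and in Proposition~\ref{p_tl>(o)}. The only point demanding any care is the passage from ``an inequality holds in one direction while equality fails'' to genuine strictness; for this one must keep track of which direction of the comparison is available, so that ruling out the opposite inequality (via Theorem~\ref{t_(o)=tl} in part $(i)$) or ruling out equality (via Theorem~\ref{t_t0=tl} in part $(ii)$) indeed upgrades the comparison to a strict one.
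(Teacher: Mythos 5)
Your argument is correct and is exactly the derivation the paper intends: the paper itself proves this corollary by simply citing Proposition~\ref{p_tl>(o)} and Theorems~\ref{t_(o)=tl} and~\ref{t_t0=tl}, and your bookkeeping (including the needed observation that $\sigma$-finiteness of $\mathcal{M}$ passes to $\mathcal{Z}(\mathcal{M})$ so that Proposition~\ref{p_tl>(o)} applies) fills in precisely the intended steps.
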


Using Corollary~\ref{c_(o)<tl} one can easily construct an example
of a von Neumann algebra $\mathcal{M}$ for which the topologies
$t_o(\mathcal{M})$ and $t_h(\mathcal{M})$ are incomparable.

Let $\mathcal{M}_1$ be a $\sigma$-finite von Neumann algebra which
is not finite, $\mathcal{M}_2$ be a not $\sigma$-finite von
Neumann algebra which is finite, and $\mathcal{M}=\mathcal{M}_1
\times\mathcal{M}_2$. Then $ LS(\mathcal{M})=LS(\mathcal{M}_1)\times
LS(\mathcal{M}_2)$~\cite[\S\,2.5]{Mur_m}, and the topology
$t(\mathcal{M})$ coincides with the product of the topologies
$t(\mathcal{M}_1)$ and $t(\mathcal{M}_2)$. Moreover, a net
$\{(T^{(1)}_\alpha, T^{(2)}_\alpha)\}_{\alpha\in A}$ in
$LS_h(\mathcal{M}_1)\times LS_h(\mathcal{M}_2)$ is
$(o)$-convergent to an element $(T^{(1)}, T^{(2)})\in
LS_h(\mathcal{M}_1)\times LS_h(\mathcal{M}_2)$ if and only if the
net $\{T^{(k)}_\alpha\}_{\alpha\in A}$ is $(o)$-convergent to
$T^{(k)}$, $k=1,2$. Identifying $LS_h(\mathcal{M}_1)$ with the
linear subspace $LS_h(\mathcal{M}_1)\times \{0\}$ and
$LS_h(\mathcal{M}_2)$ with $\{0\} \times LS_h(\mathcal{M}_2)$ we get
that the $(o)$-topology $t_o(\mathcal{M})$ in $LS_h(\mathcal{M})$
induces $(o)$-topologies in $LS_h(\mathcal{M}_1)$ and
$LS_h(\mathcal{M}_2)$, correspondingly.  It remains to apply
Corollary~\ref{c_(o)<tl}, by which the topologies $t_o(\mathcal{M})$
and $t_h(\mathcal{M})$ are incomparable.

\section{Locally measure topology  on semifinite von Neumann algebras}

Let $\mathcal{M}$ be a semifinite von Neumann algebra acting on a
Hilbert space $\mathcal{H}$, $\tau$ be a faithful normal semifinite
trace on $\mathcal{M}$. An operator $T\in S(\mathcal{M})$ with
domain $\mathfrak{D}(T)$ is called \textit{$\tau$-measurable} if for
any $\varepsilon>0$ there exists a projection $P\in
\mathcal{P}(\mathcal{M})$ such that $P(\mathcal{H})\subset
\mathfrak{D}(T)$ and $\tau(P^\bot)<\varepsilon$.

A set $S(\mathcal{M},\tau)$ of all $\tau$-measurable operators is
a $*$-subalgebra of $S(\mathcal{M})$, and $\mathcal{M}\subset
S(\mathcal{M},\tau)$. If the trace $\tau$ is finite, then
$S(\mathcal{M},\tau)=S(\mathcal{M})$.

Let $t_\tau$ be \textit{a measure topology}~\cite{Nels.} on $S(\mathcal{M},\tau)$
whose base of neighborhoods around zero is given by
\begin{multline*}
  U(\varepsilon,\delta)=\{T\in S(\mathcal{M},\tau): \ \
  \hbox{there exists a projection} \ \
  P\in \mathcal{P}(\mathcal{M}),\\
  \hbox{such that}\ \tau(P^\bot)\leqslant \delta,
  \ TP \in \mathcal{M}, \ \ \|TP\|_\mathcal{M}\leqslant\varepsilon\}, \ \ \varepsilon>0, \ \delta>0.
\end{multline*}

The pair $(S(\mathcal{M},\tau),t_\tau)$ is a complete metrizable
topological $*$-algebra. Here, the topology $t_\tau$ majorizes the
topology $t(\mathcal{M})$ on $S(\mathcal{M},\tau)$ and, if $\tau$ is
a finite trace, the topologies $t_\tau$ and $t(\mathcal{M})$
coincide~\cite[\S\S\,3.4, 3.5]{Mur_m}. Denote by
$t(\mathcal{M},\tau)$ the topology on $S(\mathcal{M},\tau)$ induced
by the topology $t(\mathcal{M})$. It is not
true in general that, if the topologies $t_\tau$ and
$t(\mathcal{M},\tau)$ are the same, then the von Neumann algebra
$\mathcal{M}$ is finite.
Indeed, if $\mathcal{M}=\mathcal{B}(\mathcal{H}), \
\dim(\mathcal{H})=\infty$, $\tau=tr$ is the canonical trace on
$\mathcal{B}(\mathcal{H})$, then
$LS(\mathcal{M})=S(\mathcal{M})=S(\mathcal{M},\tau)=\mathcal{M}$,
and the two topologies $t_\tau$ and $t(\mathcal{M})$ coincide with
the uniform topology on $\mathcal{B}(\mathcal{H})$.

At the same time, we have the following.
\begin{prop}
  \label{p_t-kon}
  If $\mathcal{M}$ is a finite von Neumann algebra with a faithful
  normal semifinite trace $\tau$ and $t_\tau = t(\mathcal{M},
  \tau)$, then $\tau(I)<\infty$.
\end{prop}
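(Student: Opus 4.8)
The plan is to argue by contradiction: assuming $\tau(I)=\infty$, I will exhibit a net of central projections that converges to $0$ in $t(\mathcal{M},\tau)$ but stays uniformly away from $0$ in $t_\tau$, contradicting the assumed equality $t_\tau=t(\mathcal{M},\tau)$. The whole construction takes place in the center, where $t(\mathcal{M})$ restricts to the locally measure topology $t(\mathcal{Z}(\mathcal{M}))$ and where the behaviour of $\tau$ is transparent. The useful direction of the hypothesis is that $t_\tau$ (which always majorizes $t(\mathcal{M},\tau)$) is now \emph{equal} to it, so every net converging in $t(\mathcal{M},\tau)$ must also converge in $t_\tau$.

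First I would pass to the restriction $\hat\tau=\tau|_{\mathcal{Z}(\mathcal{M})}$. Since $\mathcal{M}$ is finite it carries a center-valued trace $\Phi\colon\mathcal{M}\to\mathcal{Z}(\mathcal{M})$ and $\tau=\hat\tau\circ\Phi$; from this together with faithfulness of $\Phi$ one checks that $\hat\tau$ is a faithful normal semifinite trace on the abelian algebra $\mathcal{Z}(\mathcal{M})$. Semifiniteness of $\hat\tau$ supplies an increasing net $\{C_\beta\}$ of central projections with $\tau(C_\beta)<\infty$ and $C_\beta\uparrow I$. Put $A_\beta=C_\beta^\bot$, so that $A_\beta\downarrow 0$, while $\tau(A_\beta)=\tau(I)-\tau(C_\beta)=\infty$ for every $\beta$, because $\tau(I)=\infty$ and $\tau(C_\beta)<\infty$. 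Applying Proposition~\ref{p_o=tl} to the commutative algebra $\mathcal{Z}(\mathcal{M})$, the relation $A_\beta\downarrow 0$ gives $A_\beta\stackrel{(o)}{\longrightarrow}0$ and hence $A_\beta\to 0$ in $t(\mathcal{Z}(\mathcal{M}))$; since $t(\mathcal{M})$ induces $t(\mathcal{Z}(\mathcal{M}))$ on the center, $A_\beta\to 0$ in $t(\mathcal{M})$, i.e.\ in $t(\mathcal{M},\tau)$ (all $A_\beta$ lie in $\mathcal{M}\subset S(\mathcal{M},\tau)$).

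The remaining point, and the crux of the argument, is to see that such projections cannot converge to $0$ in the measure topology $t_\tau$. For this I would prove the auxiliary fact that a projection $P$ with $P\in U(\varepsilon,\delta)$ and $0<\varepsilon<1$ necessarily satisfies $\tau(P)\leqslant\delta$. Indeed, if $Q\in\mathcal{P}(\mathcal{M})$ is the projection furnished by membership in $U(\varepsilon,\delta)$, then $\|PQ\|_{\mathcal{M}}\leqslant\varepsilon<1$ forces $P\wedge Q=0$, whence by the Kaplansky formula $P=P-(P\wedge Q)\sim (P\vee Q)-Q\leqslant Q^\bot$, and the trace property yields $\tau(P)=\tau((P\vee Q)-Q)\leqslant\tau(Q^\bot)\leqslant\delta$. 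Taking $\varepsilon=\frac{1}{2}$, this shows $A_\beta\notin U(\frac{1}{2},\delta)$ for every $\beta$ and every $\delta>0$, since $\tau(A_\beta)=\infty$; hence $A_\beta\not\to 0$ in $t_\tau$. This contradicts $t_\tau=t(\mathcal{M},\tau)$, so $\tau(I)<\infty$. I expect the main obstacles to be the clean verification that $\hat\tau$ is semifinite (so that the net $C_\beta\uparrow I$ of finite-trace central projections really exists) and the projection lemma above; the topological bookkeeping, namely that a decreasing central net tends to $0$ in $t(\mathcal{M})$, is routine given Proposition~\ref{p_o=tl} and the fact that $t(\mathcal{M})$ restricts to $t(\mathcal{Z}(\mathcal{M}))$ on the center.
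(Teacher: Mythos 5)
Your proof is correct, and it rests on the same core counterexample as the paper's: a decreasing family of projections of infinite trace, which converges to zero in $t(\mathcal{M},\tau)$ but cannot converge in $t_\tau$. The execution, however, differs in two worthwhile ways. First, the paper simply asserts the existence of a sequence $P_n\downarrow 0$ with $\tau(P_n)=\infty$ and then invokes Theorem~\ref{t_(o)=tl} (which needs finiteness of $\mathcal{M}$) to get $P_n\stackrel{t(\mathcal{M})}{\longrightarrow}0$; you instead build the family explicitly inside the center as the complements $A_\beta=C_\beta^\bot$ of finite-trace central projections, using the center-valued trace $\Phi$ to verify that $\tau|_{\mathcal{Z}(\mathcal{M})}$ is semifinite (this is where finiteness of $\mathcal{M}$ enters for you, and it is genuinely needed --- for $\mathcal{B}(\mathcal{H})$ the restriction of $tr$ to the center is not semifinite). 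Having central projections lets you get the $t(\mathcal{M})$-convergence from the easier commutative Proposition~\ref{p_o=tl} together with the fact that $t(\mathcal{M})$ induces $t(\mathcal{Z}(\mathcal{M}))$ on the center, rather than from Theorem~\ref{t_(o)=tl}. Second, the paper leaves unproved the claim that an infinite-trace projection cannot converge to zero in $t_\tau$; your lemma that $P\in U(\varepsilon,\delta)$ with $\varepsilon<1$ forces $P\wedge Q=0$ and hence, by Kaplansky's formula, $P\sim (P\vee Q)-Q\leqslant Q^\bot$ and $\tau(P)\leqslant\delta$, supplies exactly the missing justification. The paper's version is shorter because it leans on results already established and on folklore facts; yours is more self-contained and makes the two implicit existence/non-convergence claims fully explicit. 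Both are valid.
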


\begin{proof}
  If $\tau(I)=\infty$, then there exists a sequence of projections
  $\{P_n\}_{n=1}^\infty \subset \mathcal{P}(\mathcal{M})$ such that
  $P_n\downarrow 0$ and $\tau(P_n)=\infty$. By
  Theorem~\ref{t_(o)=tl}, $P_n
  \stackrel{t(\mathcal{M})}{\longrightarrow}0$, however,
  $\{P_n\}_{n=1}^\infty$ does not converge to zero in the topology
  $t_\tau$.
\end{proof}

Denote by $t_{h\tau}$ the topology on $S_h(\mathcal{M}, \tau)$
induced by the topology $t_\tau$, and by $t_{o\tau}(\mathcal{M})$
the $(o)$-topology on $S_h(\mathcal{M},\tau)$. The topology
$t_{o\tau}(\mathcal{M})$, in general, does not coincide with the
topology induced by the $(o)$-topology $t_{o}(\mathcal{M})$ on
$S_h(\mathcal{M},\tau)$. For example, for
$$
\mathcal{M}=l_\infty (\mathbf{C})=\{\{\alpha_n\}_{n=1}^\infty
\subset \mathbf{C}: \sup\limits_{n\geqslant 1}|\alpha_n|<\infty\}
$$
and
$$
\tau(\{\alpha_n\})=\sum\limits_{n=1}^\infty \alpha_n, \ \ \
\alpha_n\geqslant 0,
$$
we have that $LS(\mathcal{M})=\mathbf{C}^{\mathbf{N}}$ and
$S(\mathcal{M},\tau)=l_\infty(\mathbf{C})$. Here, the $(o)$-topology
$t_o(\mathcal{M})$ on $LS_h(\mathcal{M})=\mathbf{R}^{\Delta}$ is the
topology of the coordinatewise convergence, in particular,
$$
T_n=n\, Z_n\stackrel{(o)}{\longrightarrow}0
$$
in $\mathbf{R}^\Delta$, where $Z_n=\{0,\ldots,0, 1, 0,\ldots\}$, the
number $1$ is at the $n$-th place. However, the sequence
$\{T_n\}_{n=1}^\infty$ does not converge in the
\hbox{$(o)$-topo}logy $t_{o \tau}(l_\infty(\mathbf{C}))$, since any
its subsequence is not bounded in
$l_\infty(\mathbf{R})=(l_\infty(\mathbf{C}))_h$~\cite[Ch.~VI,
$\S\,3$]{Vul.}.

\begin{note}
  \label{r_por_1}
  Since $S_+(\mathcal{M},\tau)=S(\mathcal{M},\tau)\cap
  LS_+(\mathcal{M})$ is closed in $(S(\mathcal{M},\tau),t_\tau)$ and
  $T_n \stackrel{t_\tau}{\longrightarrow}T$ if and only if $|T_n-T|
  \stackrel{t_\tau}{\longrightarrow}0$~\cite[\S\,3.4]{Mur_m}, using
  metrizability of the topology $t_\tau$ and repeating the end of
  the proof of Proposition~\ref{p_tl>(o)} we get that
  $t_{o\tau}(\mathcal{M})\leqslant t_{h\tau}$.
\end{note}

\begin{note}
\label{r_por_2}
Using the inclusions $U^*(\varepsilon,\delta)\subset
U(\varepsilon,2\delta)$ and $TU(\varepsilon,\delta)\subset
U(\varepsilon \|T\|_\mathcal{M},\delta)$, where $T\in \mathcal{M}$
we can see as in the proof of the implication $(i)\Rightarrow(ii)$
in Theorem~\ref{t_(o)=tl} that the equality
$t_{o\tau}(\mathcal{M})=t_{h\tau}$ implies that the von Neumann
algebra $\mathcal{M}$ is finite.
\end{note}

\begin{prop}\label{p_(o)=ttau}
  Let $\mathcal{M}$ be a semifinite von Neumann algebra, $\tau$ be a
  faithful normal semifinite trace on $\mathcal{M}$. The following
  conditions are equivalent.
  \begin{itemize}
  \item [$(i)$] Any net that is $(o)$-convergent in
    $S_h(\mathcal{M},\tau)$ also converges in the topology
    $t_{h\tau}$;
  \item[$(ii)$] $t_{o\tau}(\mathcal{M})=t_{h\tau}$;
  \item[$(iii)$] $\tau(I)<\infty$.
  \end{itemize}
\end{prop}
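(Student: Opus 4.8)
The plan is to establish the cycle $(iii)\Rightarrow(ii)\Leftrightarrow(i)\Rightarrow(iii)$. The equivalence $(i)\Leftrightarrow(ii)$ will be a formal consequence of the definition of the order topology together with Remark~\ref{r_por_1}; the implication $(iii)\Rightarrow(ii)$ will be reduced to Theorem~\ref{t_t0=tl}; and the only implication carrying real content, $(i)\Rightarrow(iii)$, will be proved in contrapositive by exhibiting an $(o)$-convergent sequence of projections that fails to converge in $t_{h\tau}$ whenever $\tau(I)=\infty$.

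For $(i)\Leftrightarrow(ii)$ I would argue as follows. By definition $t_{o\tau}(\mathcal{M})$ is the strongest topology on $S_h(\mathcal{M},\tau)$ for which $(o)$-convergence entails topological convergence. If $(ii)$ holds, then $(o)$-convergence entails $t_{h\tau}$-convergence, which is precisely $(i)$. Conversely, if $(i)$ holds, then $t_{h\tau}$ is one of the topologies whose convergence is implied by $(o)$-convergence, so $t_{h\tau}\leqslant t_{o\tau}(\mathcal{M})$; combined with the reverse inequality $t_{o\tau}(\mathcal{M})\leqslant t_{h\tau}$ furnished by Remark~\ref{r_por_1}, this gives $(ii)$.

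For $(iii)\Rightarrow(ii)$ I would note that $\tau(I)<\infty$ makes $\tau$ a faithful normal finite trace. Then $\mathcal{M}$ is finite (for an isometry $V$ one has $\tau(I-VV^*)=\tau(V^*V)-\tau(VV^*)=0$, hence $VV^*=I$ by faithfulness) and $\sigma$-finite (the normalized trace is a faithful normal state, so any family of nonzero orthogonal projections has summable trace-values and is therefore at most countable). Since $\tau$ is finite we have $S(\mathcal{M},\tau)=S(\mathcal{M})=LS(\mathcal{M})$ and $t_\tau=t(\mathcal{M})$, so $t_{h\tau}$ and $t_{o\tau}(\mathcal{M})$ coincide with $t_h(\mathcal{M})$ and $t_o(\mathcal{M})$ respectively, and $(ii)$ follows at once from Theorem~\ref{t_t0=tl}.

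It remains to prove $\neg(iii)\Rightarrow\neg(i)$. Assuming $\tau(I)=\infty$, I would use semifiniteness to choose inductively pairwise orthogonal projections $Q_k\in\mathcal{P}(\mathcal{M})$ with $1\leqslant\tau(Q_k)<\infty$ (possible because each partial complement $I-\sum_{k<n}Q_k$ still has infinite trace, so admits subprojections of finite trace $\geqslant 1$), and set $P_n=\sup_{k\geqslant n}Q_k$. Orthogonality gives $P_n\downarrow 0$, so $P_n\stackrel{(o)}{\longrightarrow}0$ in $S_h(\mathcal{M},\tau)$, while $\tau(P_n)=\sum_{k\geqslant n}\tau(Q_k)=\infty$. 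A short trace estimate then shows that $\{P_n\}$ cannot converge to $0$ in $t_{h\tau}$: if a projection $P$ lies in $U(\varepsilon,\delta)$ with $\varepsilon<1$, witnessed by $Q$ with $\tau(Q^\bot)\leqslant\delta$ and $\|PQ\|\leqslant\varepsilon$, then $PQP=(PQ)(PQ)^*$ has norm $\leqslant\varepsilon^2$, so $PQ^\bot P=P-PQP\geqslant(1-\varepsilon^2)P$, whence $(1-\varepsilon^2)\tau(P)\leqslant\tau(PQ^\bot P)=\tau(Q^\bot PQ^\bot)\leqslant\tau(Q^\bot)\leqslant\delta$ and therefore $\tau(P)<\infty$. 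Consequently no $P_n$ belongs to $U(1/2,1)$, so $\{P_n\}$ does not converge to $0$ in $t_{h\tau}$, contradicting $(i)$.

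The main obstacle is this last implication: the task is to manufacture a net that is genuinely $(o)$-convergent yet escapes $t_{h\tau}$, and the crux is the trace inequality showing that projections lying in a small basic neighbourhood $U(\varepsilon,\delta)$ are automatically of finite trace---it is exactly this fact that lets infinite-trace projections separate the two topologies.
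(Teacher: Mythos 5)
Your proof is correct and follows essentially the same route as the paper's: the equivalence $(i)\Leftrightarrow(ii)$ via Remark~\ref{r_por_1}, the reduction of the finite-trace case to the theorems comparing $t_h$ with the $(o)$-topology, and a decreasing sequence of infinite-trace projections for the remaining implication, which is exactly the construction underlying Proposition~\ref{p_t-kon}. The only deviation is that you prove $\neg(iii)\Rightarrow\neg(i)$ directly and supply the trace estimate showing that projections in $U(\varepsilon,\delta)$ with $\varepsilon<1$ have finite trace (a detail the paper leaves implicit), whereas the paper routes $(ii)\Rightarrow(iii)$ through Remark~\ref{r_por_2} to first establish that $\mathcal{M}$ is finite --- a step your argument shows is not needed here.
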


\begin{proof}
  The implication $(i)\Rightarrow (ii)$ follows from
  Remark~\ref{r_por_1} and definition of the topology
  $t_{o\tau}(\mathcal{M})$.

  $(ii)\Rightarrow (iii)$. By Remark~\ref{r_por_2}, the von Neumann
  algebra $\mathcal{M}$ is finite. Repeating the proof of
  Proposition~\ref{p_t-kon} we see that $\tau(I)<\infty$.

  $(iii)\Rightarrow (i)$. If $\tau(I)<\infty$, then $\mathcal{M}$ is
  a finite von Neumann algebra, and
  $S(\mathcal{M},\tau)=LS(\mathcal{M})$. Hence, $(i)$ follows from
  Theorem~\ref{t_(o)=tl}.
\end{proof}

Together with the topology $t_\tau$ on $S(\mathcal{M},\tau)$, one
can also consider two more Hausdorff vector topologies associated
with the trace $\tau$~\cite{Bik2}. This are \textit{the $\tau$-locally measure topology $t_{\tau l}$}
 and \textit{the weak $\tau$-locally measure topology $t_{w \tau l}$}.
 The sets
$$
U_\tau(\varepsilon,\delta, P)=\{T\in S(\mathcal{M},\tau): \ \
\hbox{there exists a projection} \ \ Q\in \mathcal{P}(\mathcal{M})
$$
$$
\ \ \hbox{such that}\quad Q\leqslant P, \, \tau(P-Q)\leqslant
\delta, \ TQ \in \mathcal{M}, \ \
\|TQ\|_\mathcal{M}\leqslant\varepsilon\}
$$
(resp.,
$$
U_{w \tau}(\varepsilon,\delta,P)=\{T\in S(\mathcal{M},\tau): \ \
\hbox{there exists a projection} \ \ Q\in \mathcal{P}(\mathcal{M})
$$
$$
 \hbox{such that}\quad
Q\leqslant P, \, \tau(P-Q)\leqslant \delta, \ Q\,TQ \in \mathcal{M}, \ \ \|Q\,TQ\|_\mathcal{M}\leqslant\varepsilon\}),
$$
where $\varepsilon>0, \ \delta>0, \ P\in \mathcal{P}(\mathcal{M}), \
\tau(P)<\infty$, form a neighborhood base around in the topology
$t_{\tau l}$ (resp., in the topology $t_{w \tau l}$).

It is clear that $t_{w \tau l}\leqslant t_{\tau l} \leqslant
t_\tau$, and if $\tau(I)<\infty$, all three topologies $t_{w \tau
  l}$, $t_{\tau l}$, and $t_\tau$ coincide.

\medskip

Let us remark that if $\mathcal{M}=\mathcal{B}(\mathcal{H})$ and
$\tau=tr$, the topology $t_{\tau l}$ coincides with the strong
operator topology, and the topology $t_{w \tau l}$ with the weak
operator topology, that is, if $\dim(\mathcal{H})=\infty$, we have
$t_{w \tau l} < t_{\tau l} < t_\tau$ in this case.

The following criterion for convergence of nets in the topologies $t_{w \tau l}$ and $t_{\tau l}$
 can be obtained directly from the definition.

\begin{prop}[\cite{Bik2}]\label{p_red_top}
  If $\{T_\alpha\}_{\alpha \in A}, T \subset S(\mathcal{M},\tau)$,
  then $T_\alpha \stackrel{t_{\tau l}} {\longrightarrow}T$ (resp.,
  $T_\alpha \stackrel{t_{w \tau l}}{\longrightarrow}T$) if and only
  if $T_\alpha P\stackrel{t_{\tau}}{\longrightarrow}TP$ (resp.,
  $PT_\alpha P\stackrel{t_{\tau}} {\longrightarrow}PTP$) for all
  $P\in \mathcal{P}(\mathcal{M})$ satisfying $\tau(P)<\infty$.
\end{prop}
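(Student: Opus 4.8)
The plan is to reduce the whole statement to convergence to zero. Since $t_{\tau l}$, $t_{w\tau l}$ and $t_\tau$ are vector topologies and the maps $S\mapsto SP$, $S\mapsto PSP$ are linear, $T_\alpha\stackrel{t_{\tau l}}{\longrightarrow}T$ is equivalent to $T_\alpha-T\stackrel{t_{\tau l}}{\longrightarrow}0$, and likewise $T_\alpha P\stackrel{t_\tau}{\longrightarrow}TP$ (resp. $PT_\alpha P\stackrel{t_\tau}{\longrightarrow}PTP$) is equivalent to $(T_\alpha-T)P\stackrel{t_\tau}{\longrightarrow}0$ (resp. $P(T_\alpha-T)P\stackrel{t_\tau}{\longrightarrow}0$). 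So I would assume $T=0$ throughout, after which everything is a matter of passing between the neighbourhoods $U_\tau(\varepsilon,\delta,P)$, $U_{w\tau}(\varepsilon,\delta,P)$ and $U(\varepsilon,\delta)$ by manipulating cutting projections. The two recurring tools will be the parallelogram law $P-P\wedge R\sim P\vee R-R\leqslant R^\bot$, which gives $\tau(P-P\wedge R)\leqslant\tau(R^\bot)$ since $\tau$ is invariant under Murray--von Neumann equivalence, and the subadditivity $U(\varepsilon_1,\delta_1)+U(\varepsilon_2,\delta_2)\subset U(\varepsilon_1+\varepsilon_2,\delta_1+\delta_2)$, obtained by replacing two witnessing projections $R_1,R_2$ with $R_1\wedge R_2$.

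For the $t_{\tau l}$ case both directions are routine. If $T_\alpha\stackrel{t_{\tau l}}{\longrightarrow}0$, fix $P$ with $\tau(P)<\infty$ and $\varepsilon,\delta>0$; eventually there is $Q_\alpha\leqslant P$ with $\tau(P-Q_\alpha)\leqslant\delta$ and $\|T_\alpha Q_\alpha\|_{\mathcal{M}}\leqslant\varepsilon$, and cutting $T_\alpha P$ by $R=Q_\alpha+P^\bot$ gives $\tau(R^\bot)=\tau(P-Q_\alpha)\leqslant\delta$ and $(T_\alpha P)R=T_\alpha Q_\alpha$, so $T_\alpha P\in U(\varepsilon,\delta)$. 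Conversely, if $T_\alpha P\stackrel{t_\tau}{\longrightarrow}0$, then eventually $T_\alpha P\in U(\varepsilon,\delta)$ with some $R_\alpha$; setting $Q_\alpha=P\wedge R_\alpha$ the parallelogram law yields $\tau(P-Q_\alpha)\leqslant\tau(R_\alpha^\bot)\leqslant\delta$, and $T_\alpha Q_\alpha=[(T_\alpha P)R_\alpha]Q_\alpha\in\mathcal{M}$ with norm $\leqslant\varepsilon$, so $T_\alpha\in U_\tau(\varepsilon,\delta,P)$.

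For $t_{w\tau l}$ the converse direction is again direct: from $PT_\alpha P\in U(\varepsilon,\delta)$ with witness $R_\alpha$, put $Q_\alpha=P\wedge R_\alpha$, so that $Q_\alpha=PQ_\alpha=Q_\alpha P=R_\alpha Q_\alpha$ and hence $Q_\alpha T_\alpha Q_\alpha=Q_\alpha[(PT_\alpha P)R_\alpha]Q_\alpha\in\mathcal{M}$ has norm $\leqslant\varepsilon$, while $\tau(P-Q_\alpha)\leqslant\delta$; thus $T_\alpha\in U_{w\tau}(\varepsilon,\delta,P)$.

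The forward direction of the $t_{w\tau l}$ case is where I expect the real work, and the main obstacle. Writing $E_\alpha=P-Q_\alpha$ (so $\tau(E_\alpha)\leqslant\delta$ and $\|Q_\alpha T_\alpha Q_\alpha\|_{\mathcal{M}}\leqslant\varepsilon$), I would decompose
\[
PT_\alpha P=Q_\alpha T_\alpha Q_\alpha+Q_\alpha T_\alpha E_\alpha+E_\alpha T_\alpha P .
\]
The first summand lies in $U(\varepsilon,\delta)$. The second has right support under $E_\alpha$, so cutting by $E_\alpha^\bot$ annihilates it and places it in $U(\eta,\delta)$ for every $\eta>0$. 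The third has left support under $E_\alpha$ and cannot be cut from the right directly; here I pass to the adjoint $PT_\alpha^*E_\alpha$, whose right support is under $E_\alpha$ so that it lies in $U(\eta,\delta)$, and then invoke the adjoint estimate $U^*(\varepsilon,\delta)\subset U(\varepsilon,2\delta)$ of Remark~\ref{r_por_2} to conclude $E_\alpha T_\alpha P\in U(\eta,2\delta)$. Adding the three pieces by subadditivity gives $PT_\alpha P\in U(\varepsilon+2\eta,4\delta)$, whence $PT_\alpha P\stackrel{t_\tau}{\longrightarrow}0$ after choosing $\varepsilon,\delta,\eta$ small. The genuine difficulty is exactly the asymmetry built into $U(\varepsilon,\delta)$, namely that it only cuts on the right: controlling the off-diagonal term of small \emph{left} support forces the detour through the adjoint and the use of the adjoint-invariance of $t_\tau$.
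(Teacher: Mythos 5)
Your proof is correct, but there is nothing in the paper to compare it against: Proposition~\ref{p_red_top} is imported from \cite{Bik2}, and the authors only remark that it ``can be obtained directly from the definition'' without writing out an argument. Your verification is precisely the direct argument they are alluding to, and every step checks: the reduction to $T=0$; the witness $R=Q_\alpha+P^{\bot}$ in the forward $t_{\tau l}$ direction; the passage to $Q_\alpha=P\wedge R_\alpha$ via the parallelogram law $P-P\wedge R_\alpha\sim P\vee R_\alpha-R_\alpha\leqslant R_\alpha^{\bot}$ in both converse directions; and, in the only place where a real idea is needed, the corner decomposition $PT_\alpha P=Q_\alpha T_\alpha Q_\alpha+Q_\alpha T_\alpha E_\alpha+E_\alpha T_\alpha P$ combined with the adjoint inclusion $U^*(\eta,\delta)\subset U(\eta,2\delta)$, which the paper does record in Remark~\ref{r_por_2}, to control the piece with small left support. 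Two minor points only: the sum of your three pieces actually lands in $U(\varepsilon+2\eta,3\delta)$ rather than $4\delta$ (harmless either way), and it is worth saying explicitly that $Q_\alpha T_\alpha E_\alpha$ and $E_\alpha T_\alpha P$ lie in $S(\mathcal{M},\tau)$ because $S(\mathcal{M},\tau)$ is a $*$-algebra containing $\mathcal{M}$, so all the cutting and adjoint operations you perform stay inside the space where the neighborhoods $U(\varepsilon,\delta)$ are defined.
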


\medskip Let us also list the following useful properties of the
topologies $t_{\tau l}$ and $t_{w\tau l}$.

\medskip

\begin{prop}[\cite{Bik2}]\label{p_sv_top}
  Let $T_\alpha, \, S_\alpha \in S(\mathcal{M},\tau)$. Then
  \begin{itemize}
  \item [$(i)$] $T_\alpha \stackrel{t_{\tau l}}{\longrightarrow}$
    $\Longleftrightarrow$ $|T_\alpha| \stackrel{t_{\tau
        l}}{\longrightarrow}0$ $\Longleftrightarrow$ $|T_\alpha|^2
    \stackrel{t_{w\tau l}}{\longrightarrow}0$;
  \item[$(ii)$] if $T_\alpha \stackrel{t_{\tau
        l}}{\longrightarrow}T$, $S_\alpha \stackrel{t_{\tau
        l}}{\longrightarrow}S$, and the net $\{S_\alpha\}$ is
    $t_\tau$-bounded, then $T_\alpha S_\alpha\stackrel{t_{\tau
        l}}{\longrightarrow}T S$;
  \item[$(iii)$] if $0\leqslant S_\alpha\leqslant T_\alpha$ and
    $T_\alpha \stackrel{t_{w\tau l}}{\longrightarrow}0$, then
    $S_\alpha \stackrel{t_{w\tau l}}{\longrightarrow}0$.
  \end{itemize}
\end{prop}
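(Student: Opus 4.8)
The plan is to reduce each of the three assertions to the already-understood measure topology $t_\tau$ by means of the convergence criterion of Proposition~\ref{p_red_top}, and then to use that $(S(\mathcal{M},\tau),t_\tau)$ is a complete topological $*$-algebra---so that multiplication is jointly $t_\tau$-continuous and the involution is $t_\tau$-continuous---together with the recalled equivalence $T_\alpha\stackrel{t_\tau}{\longrightarrow}T\iff|T_\alpha-T|\stackrel{t_\tau}{\longrightarrow}0$. I write $R_\alpha=T_\alpha-T$ and $W_\alpha=S_\alpha-S$, and I use repeatedly that for $A\in S(\mathcal{M},\tau)$ the left and right supports are Murray--von Neumann equivalent, so that a finite-trace right support forces a finite-trace left support, $A=l(A)\,A$ with $\tau(l(A))=\tau(r(A))$; in particular, for a finite projection $P$ the operator $SP$ has finite-trace left support.

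For $(i)$ I argue on the reduced level. By Proposition~\ref{p_red_top}, $T_\alpha\stackrel{t_{\tau l}}{\longrightarrow}0$ is equivalent to $T_\alpha P\stackrel{t_\tau}{\longrightarrow}0$ for all finite $P$, and likewise for $|T_\alpha|$. Using the polar decomposition $T_\alpha=U_\alpha|T_\alpha|$ with $U_\alpha\in\mathcal{M}$ and the relations $T_\alpha P=U_\alpha(|T_\alpha|P)$, $|T_\alpha|P=U_\alpha^{*}(T_\alpha P)$, the first equivalence follows at once from the fact that left multiplication by a contraction of $\mathcal{M}$ carries each basic neighbourhood $U(\varepsilon,\delta)$ into itself. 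For the second equivalence I fix a finite $P$ and use the identity $P|T_\alpha|^{2}P=(|T_\alpha|P)^{*}(|T_\alpha|P)$: joint continuity of multiplication and continuity of the involution give $|T_\alpha|P\stackrel{t_\tau}{\longrightarrow}0\Rightarrow P|T_\alpha|^{2}P\stackrel{t_\tau}{\longrightarrow}0$, while the converse is the general fact $A\stackrel{t_\tau}{\longrightarrow}0\iff|A|\stackrel{t_\tau}{\longrightarrow}0\iff|A|^{2}\stackrel{t_\tau}{\longrightarrow}0$ applied to $A=|T_\alpha|P$; the last step rests on the observation that for positive $B$ the spectral projections of $B$ and $B^{2}$ coincide, so $\tau(E_\lambda^{\bot}(B))=\tau(E_{\lambda^{2}}^{\bot}(B^{2}))$. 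A final application of Proposition~\ref{p_red_top} reads $P|T_\alpha|^{2}P\stackrel{t_\tau}{\longrightarrow}0$ (all finite $P$) as $|T_\alpha|^{2}\stackrel{t_{w\tau l}}{\longrightarrow}0$.

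Statement $(iii)$ reduces, again by Proposition~\ref{p_red_top}, to the monotonicity of the measure topology: from $0\leqslant S_\alpha\leqslant T_\alpha$ one gets $0\leqslant PS_\alpha P\leqslant PT_\alpha P$ for every finite $P$, so it is enough to know that $0\leqslant A_\alpha\leqslant B_\alpha$ and $B_\alpha\stackrel{t_\tau}{\longrightarrow}0$ imply $A_\alpha\stackrel{t_\tau}{\longrightarrow}0$. This in turn follows from the standard inequality $\tau(E_\lambda^{\bot}(A))\leqslant\tau(E_\lambda^{\bot}(B))$, valid for $0\leqslant A\leqslant B$, which shows that any $U(\varepsilon,\delta)$ containing $B_\alpha$ also contains $A_\alpha$.

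The real work is in $(ii)$, which I expect to be the main obstacle. By Proposition~\ref{p_red_top} it suffices to prove $T_\alpha S_\alpha P\stackrel{t_\tau}{\longrightarrow}TSP$ for each finite $P$, and I split $T_\alpha S_\alpha-TS=R_\alpha S+T_\alpha W_\alpha$. The first summand is routine: $SP$ has finite-trace left support $F=l(SP)$, so $R_\alpha SP=(R_\alpha F)(SP)$, and $R_\alpha F\stackrel{t_\tau}{\longrightarrow}0$ because $F$ is a \emph{fixed} finite projection, whence $R_\alpha SP\stackrel{t_\tau}{\longrightarrow}0$ by $t_\tau$-continuity of right multiplication by the fixed operator $SP$. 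Everything hard is in the net-times-net term $T_\alpha W_\alpha$, where the $t_{\tau l}$-null difference $W_\alpha P\stackrel{t_\tau}{\longrightarrow}0$ is multiplied by a factor that is only $t_{\tau l}$-convergent: this is exactly the point at which multiplication fails to be jointly continuous, and it is here that the $t_\tau$-boundedness hypothesis is indispensable. The mechanism I would use is the standard ``bounded times $t_\tau$-null is $t_\tau$-null'' principle: one truncates so that the factor multiplying the vanishing difference becomes \emph{uniformly} norm-bounded (for a prescribed $\delta$ there are $\lambda>0$ and projections $G_\alpha$ with $\tau(G_\alpha^{\bot})\leqslant\delta$ beyond which the norm is at most $\lambda$), the discarded piece being absorbed because an operator whose right support has trace $\leqslant\delta$ lies in every $U(\varepsilon,\delta)$. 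The genuine difficulty---and the step I expect to require the most care---is that the finite-trace supports attached to these truncations vary with $\alpha$, so the control of the $t_{\tau l}$-convergent factor at each \emph{individual} finite projection must be made uniform over this $\alpha$-indexed family; it is precisely the $t_\tau$-boundedness that supplies the uniform norm bound closing this gap, and verifying that uniformity is where I would concentrate the effort.
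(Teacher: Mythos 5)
The paper itself offers no proof of this proposition --- it is imported from~\cite{Bik2} --- so there is nothing internal to compare with; I can only judge your reconstruction on its merits. Your arguments for $(i)$ and $(iii)$ are correct and essentially complete: the reduction through Proposition~\ref{p_red_top}, the identities $T_\alpha P=U_\alpha(|T_\alpha|P)$, $|T_\alpha|P=U_\alpha^{*}(T_\alpha P)$ and $P|T_\alpha|^{2}P=(|T_\alpha|P)^{*}(|T_\alpha|P)$, and the monotonicity $\tau(E_\lambda^{\bot}(A))\leqslant\tau(E_\lambda^{\bot}(B))$ for $0\leqslant A\leqslant B$ all do exactly what you need.

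The gap is in $(ii)$, and it is not just unfinished bookkeeping. In your decomposition $T_\alpha S_\alpha-TS=R_\alpha S+T_\alpha W_\alpha$ the ``factor multiplying the vanishing difference'' is $T_\alpha$, whereas the $t_\tau$-boundedness hypothesis is imposed on $\{S_\alpha\}$; the truncation you describe (projections $G_\alpha$ with $\tau(G_\alpha^{\bot})\leqslant\delta$ and a uniform norm bound $\lambda$ on the truncated part) \emph{is} $t_\tau$-boundedness of $\{T_\alpha\}$, which you do not have. Switching to the other decomposition $R_\alpha S_\alpha+TW_\alpha$ does not rescue you: there the hard term is $R_\alpha S_\alpha P=(R_\alpha F_\alpha)(S_\alpha P)$ with $F_\alpha=l(S_\alpha P)$ varying with $\alpha$, and the uniformity over this $\alpha$-indexed family of finite projections --- the step you say you would concentrate on --- is genuinely false. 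Indeed, with the boundedness placed on the right-hand factor the assertion itself fails for nets. Take $\mathcal{M}=\mathcal{B}(\mathcal{H})$, $\tau=tr$, so that (as the paper notes) $t_{\tau l}$ is the strong operator topology and $t_\tau$ the norm topology. Index a net by pairs $(F,n)$, $F\subset\mathcal{H}$ finite, $n\in\mathbf{N}$, ordered by inclusion and size; pick unit vectors $e_F\perp\mathrm{span}(F)$ and a fixed unit vector $\xi_0$, and set $T_{(F,n)}\eta=n\langle\eta,e_F\rangle e_F$, $S_{(F,n)}\eta=\frac{1}{n}\langle\eta,\xi_0\rangle e_F$. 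Then $T_{(F,n)}\stackrel{t_{\tau l}}{\longrightarrow}0$ (for fixed $\eta$ one eventually has $\eta\in F$, hence $T_{(F,n)}\eta=0$), $S_{(F,n)}\stackrel{t_{\tau}}{\longrightarrow}0$ with $\|S_{(F,n)}\|_{\mathcal{M}}\leqslant 1$, yet $T_{(F,n)}S_{(F,n)}\eta=\langle\eta,\xi_0\rangle e_F$ sends $\xi_0$ to the unit vector $e_F$ and so does not converge to $0$ strongly. The hypothesis must therefore sit on the \emph{left} factor $\{T_\alpha\}$ (exactly as in the classical fact that multiplication is jointly SOT-continuous on bounded sets times $\mathcal{B}(\mathcal{H})$); with that reading your truncation scheme does close, applied to $T_\alpha$ in the term $T_\alpha W_\alpha P=(T_\alpha G_\alpha)(G_\alpha W_\alpha P)+(T_\alpha G_\alpha^{\bot})W_\alpha P$, the second summand being absorbed because its left support has trace at most $\delta$. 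In the one place the paper invokes $(ii)$ (Proposition~\ref{p_t0>wtl}) the two factors coincide, so the sidedness is invisible there --- but your proof, as written, cannot be completed for the statement as displayed.
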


\medskip

To compare the topologies $t_{w \tau l}$ and $t_{\tau l}$ with the
topology $t(\mathcal{M})$, we will need the following property of
the topology $t(\mathcal{M})$.

\begin{prop}\label{p_tl_ind}
  The topology $t(\mathcal{M})$ induced the topology
  $t(P\mathcal{M}P)$ on $LS(P\mathcal{M}P)$, where $0\neq P \in
  \mathcal{P}(\mathcal{M})$.
\end{prop}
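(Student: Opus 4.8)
The plan is to reduce the comparison of the two locally measure topologies on $LS(P\mathcal{M}P)$ to the net-convergence criteria of Proposition~\ref{plm-spk1}, after installing a compatible dimension function on $P\mathcal{M}P$. Since both $t(\mathcal{M})|_{LS(P\mathcal{M}P)}$ and the intrinsic topology $t(P\mathcal{M}P)$ are linear Hausdorff topologies on the same space, it suffices to show that a net converges to $0$ in one precisely when it converges to $0$ in the other. To set up, write $z=c(P)$ for the central carrier of $P$ and recall the standard isomorphism $\mathcal{Z}(P\mathcal{M}P)\cong\mathcal{Z}(\mathcal{M})z$ implemented by $Z\mapsto ZP$; identifying $\mathcal{Z}(\mathcal{M})$ with $L_\infty(\Omega,\Sigma,\mu)$ and letting $\Omega_0\in\Sigma$ be a support of $z$, this gives $LS(\mathcal{Z}(P\mathcal{M}P))\cong L_0(\Omega_0,\Sigma,\mu)$, realized inside $L_0(\Omega,\Sigma,\mu)$ as the functions vanishing off $\Omega_0$. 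I would first record (citing the corresponding lemma in~\cite{Mur_m}) that a locally measurable operator for $P\mathcal{M}P$, extended by $0$ on $(I-P)\mathcal{H}$, is locally measurable for $\mathcal{M}$, so that $LS(P\mathcal{M}P)\subset LS(\mathcal{M})$ and the restricted topology is well defined.

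Next I would construct the dimension function by restriction. Fix a dimension function $\mathcal{D}$ on $\mathcal{P}(\mathcal{M})$ and define $\mathcal{D}_P$ on $\mathcal{P}(P\mathcal{M}P)$ by $Q\mapsto\mathcal{D}(Q)$, with values viewed in $L_+(\Omega_0,\Sigma,\mu)$. Properties (i), (iii), (iv), (vi) are inherited immediately, since the zero projection, orthogonality, partial isometries, and suprema of increasing nets of projections are intrinsic notions independent of the ambient algebra; property (ii) is inherited because finiteness of a projection is absolute, so $Q\leqslant P$ is finite in $P\mathcal{M}P$ iff it is finite in $\mathcal{M}$. Only property (v) uses the central identification: for a central projection $W=ZP$ of $P\mathcal{M}P$ (with $Z\in\mathcal{P}(\mathcal{Z}(\mathcal{M}))$, $Z\leqslant z$) and $Q\leqslant P$ one has $WQ=ZQ$, whence $\mathcal{D}(ZQ)=Z\mathcal{D}(Q)$ by property (v) of $\mathcal{D}$, which is exactly $W\mathcal{D}_P(Q)$ after identification. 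As $t(P\mathcal{M}P)$ is independent of the choice of dimension function~\cite{Yead1}, I may compute it with $\mathcal{D}_P$.

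With this in hand I would reduce to projections. For $\{T_\alpha\}\subset LS(P\mathcal{M}P)$ the modulus $|T_\alpha|$ and its spectral family are intrinsic, and $E^\bot_\lambda(|T_\alpha|)\leqslant P$ for every $\lambda>0$; hence by Proposition~\ref{plm-spk1}(ii), $T_\alpha\to0$ in $t(\mathcal{M})$ (resp.\ in $t(P\mathcal{M}P)$) iff $E^\bot_\lambda(|T_\alpha|)\to0$ in the respective topology for all $\lambda>0$. Thus it suffices to prove, for an arbitrary net of projections $\{Q_\alpha\}$ with $Q_\alpha\leqslant P$, that $Q_\alpha\to0$ in $t(\mathcal{M})$ iff $Q_\alpha\to0$ in $t(P\mathcal{M}P)$, and I would apply Proposition~\ref{plm-spk1}(i) on both sides. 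The one ingredient to be verified separately is that $t(\mathcal{Z}(P\mathcal{M}P))$ coincides with the restriction of $t(\mathcal{Z}(\mathcal{M}))$ to the corner $z$; this is the commutative case and is transparent from the neighbourhood base $W(B,\varepsilon,\delta)$, since for functions supported on $\Omega_0$ local convergence in measure over $\Omega$ and over $\Omega_0$ agree.

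Finally I would match the two convergence conditions. Given central projections $\{Z_\alpha\}\subset\mathcal{P}(\mathcal{Z}(\mathcal{M}))$ witnessing $Q_\alpha\to0$ in $t(\mathcal{M})$, the projections $W_\alpha=Z_\alpha P$ witness convergence in $t(P\mathcal{M}P)$: indeed $W_\alpha Q_\alpha=Z_\alpha Q_\alpha$ is finite, the $(P\mathcal{M}P)$-complement $P-W_\alpha=(z-Z_\alpha)P$ corresponds to $z-Z_\alpha=Z_\alpha^\bot z$, which tends to $0$ in the corner topology because $Z_\alpha^\bot\to0$ in $t(\mathcal{Z}(\mathcal{M}))$, and $\mathcal{D}_P(W_\alpha Q_\alpha)=\mathcal{D}(Z_\alpha Q_\alpha)\to0$. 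Conversely, given central projections $\{W_\alpha\}$ of $P\mathcal{M}P$, write $W_\alpha=Z_\alpha P$ with $Z_\alpha\leqslant z$ and set $\hat Z_\alpha=Z_\alpha+z^\bot$; then $\hat Z_\alpha Q_\alpha=Z_\alpha Q_\alpha=W_\alpha Q_\alpha$ is finite, $\hat Z_\alpha^{\,\bot}=z-Z_\alpha\to0$ in $t(\mathcal{Z}(\mathcal{M}))$ by the preceding identification, and $\mathcal{D}(\hat Z_\alpha Q_\alpha)=\mathcal{D}_P(W_\alpha Q_\alpha)\to0$, so $\{\hat Z_\alpha\}$ witnesses $Q_\alpha\to0$ in $t(\mathcal{M})$. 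I expect the main obstacle to be precisely this last bookkeeping together with the identification of the central topologies: verifying the dimension-function axioms and the spectral reduction is routine, whereas correctly translating finiteness and the two $t(\mathcal{Z})$-convergence conditions across the isomorphism $\mathcal{Z}(P\mathcal{M}P)\cong\mathcal{Z}(\mathcal{M})z$ carries the real content.
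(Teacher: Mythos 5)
Your proposal is correct and follows essentially the same route as the paper: reduce to nets of projections below $P$ via Proposition~\ref{plm-spk1}(ii) and the relation $E^\bot_\lambda(|T_\alpha|)=P-E^P_\lambda(|T_\alpha|)$, transport the dimension function to $\mathcal{P}(P\mathcal{M}P)$ through the isomorphism $\mathcal{Z}(P\mathcal{M}P)=P\mathcal{Z}(\mathcal{M})\cong Z(P)\mathcal{Z}(\mathcal{M})$ (your $\mathcal{D}_P(Q)=\mathcal{D}(Q)$ agrees with the paper's $Z(P)\mathcal{D}(Q)$ since $\mathcal{D}(Q)$ is supported under $Z(P)$ for $Q\leqslant P$), and translate the witnessing central projections in both directions. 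Your explicit treatment of the converse direction via $\hat Z_\alpha=Z_\alpha+Z(P)^\bot$ spells out a step the paper only asserts "can be proved in the same way."
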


\begin{proof}
  Let $\{Q_\alpha\}_{\alpha\in A}\subset \mathcal{P}(P\mathcal{M}P)$
  and $Q_\alpha \stackrel{t(\mathcal{M})}{\longrightarrow} 0$. By
  Proposition~\ref{plm-spk1}$(i)$ there exists a net
  $\{Z_\alpha\}_{\alpha\in A}\subset
  \mathcal{P}(\mathcal{Z}(\mathcal{M}))$ such that $Z_\alpha
  Q_\alpha \in \mathcal{P}_{fin}(\mathcal{M})$ for any $\alpha\in
  A$,
  $Z_\alpha^\bot\stackrel{t(\mathcal{Z}(\mathcal{M}))}{\longrightarrow}0$,
  and $\mathcal{D}(Z_\alpha
  Q_\alpha)\stackrel{t(\mathcal{Z}(\mathcal{M}))}{\longrightarrow}0$.
  The projection $R_\alpha=PZ_\alpha$ is in the center
  $\mathcal{Z}(P\mathcal{M}P)$ of the von Neumann algebra
  $P\mathcal{M}P$, and $R_\alpha Q_\alpha=Z_\alpha Q_\alpha$ is a
  finite projection in $P\mathcal{M}P$. Denote by $Z(P)$ the central
  support of the projection $P$. The mapping $\psi:
  P\mathcal{Z}(\mathcal{M})\rightarrow Z(P)\mathcal{Z}(\mathcal{M})$
  given by
  $$
  \psi(PZ)=Z(P)Z, \ \ Z\in \mathcal{Z}(\mathcal{M}),
$$
is a $*$-isomorphism from $P\mathcal{Z}(\mathcal{M})$ onto
$Z(P)\mathcal{Z}(\mathcal{M})$. Since
$\mathcal{Z}(P\mathcal{M}P)=P\mathcal{Z}(\mathcal{M})$~\cite[Sect.~3.1.5]{SZ.},
the $*$-algebras $\mathcal{Z}(P\mathcal{M}P)$ and
$Z(P)\mathcal{Z}(\mathcal{M})$ are \hbox{$*$-iso}morphic.

It is clear that
$$
\mathcal{D}_P(Q):\,=Z(P)\mathcal{D}(Q), \ \ \ Q\in
\mathcal{P}(P\mathcal{M}P),
$$
is a dimension function on $\mathcal{P}(P\mathcal{M}P)$, where
$\mathcal{D}$ is the initial dimension function on
$\mathcal{P}(\mathcal{M})$. We have that
$$
\mathcal{D}_P(R_\alpha Q_\alpha)=\mathcal{D}_P(Z_\alpha
Q_\alpha)=Z(P)\mathcal{D}(Z_\alpha
Q_\alpha)\stackrel{t(Z(P)\mathcal{Z}(\mathcal{M}))}{\longrightarrow}0.
$$
Moreover,
$$
P-R_\alpha=P(I-Z_\alpha)\stackrel{\psi}{\longmapsto}
Z(P)Z_\alpha^\bot\stackrel{t(Z(P)\mathcal{Z}(\mathcal{M}))}{\longrightarrow}0.
$$
Hence, by Proposition~\ref{plm-spk1} (i) we get that $Q_\alpha
\stackrel{t(P\mathcal{M}P)}{\longrightarrow}0$.

In the same way we can prove that $Q_\alpha
\stackrel{t(P\mathcal{M}P)}{\longrightarrow}0$, $\{Q_\alpha\}\subset
\mathcal{P}(P\mathcal{M}P)$, implies that $Q_\alpha
\stackrel{t(\mathcal{M})}{\longrightarrow}0$.

Let now $\{T_\alpha\}\subset LS(P\mathcal{M}P)$ and $T_\alpha
\stackrel{t(\mathcal{M})}{\longrightarrow}0$. By
Proposition~\ref{plm-spk1}(ii), we have that
$E^\bot_\lambda(|T_\alpha|)
\stackrel{t(\mathcal{M})}{\longrightarrow}0$ for any $\lambda>0$,
where $\{E_\lambda(|T_\alpha|)\}$ is a family of spectral
projections for $|T_\alpha|$. Denote by
$\{E^P_\lambda(|T_\alpha|)\}$ the family of spectral projections for
$|T_\alpha|$ in $LS(P\mathcal{M}P)$, $\lambda>0$. It is clear that
$E_\lambda(|T_\alpha|)=P^\bot+E^P_\lambda(|T_\alpha|)$ and
$E^\bot_\lambda(|T_\alpha|)=P-E^P_\lambda(|T_\alpha|)$ for all
$\lambda>0$.  It follows from above that $P-E^P_\lambda(|T_\alpha|)
\stackrel{t(P\mathcal{M}P)}{\longrightarrow}0$ for all $\lambda>0$.
Hence, by Proposition~\ref{plm-spk1}(ii), it follows that $T_\alpha
\stackrel{t(P\mathcal{M}P)}{\longrightarrow}0$.

One can similarly prove that the convergence $T_\alpha
\stackrel{t(P\mathcal{M}P)}{\longrightarrow}0$ implies the
convergence $T_\alpha \stackrel{t(\mathcal{M})}{\longrightarrow}0$.
\end{proof}

\begin{thm}\label{t_tl>ttl}
  $t_{\tau l}\leqslant t(\mathcal{M},\tau)$.
\end{thm}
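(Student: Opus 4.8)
The plan is to prove the inequality through convergence of nets. Both $t_{\tau l}$ and $t(\mathcal{M},\tau)$ are translation-invariant (linear Hausdorff) topologies, so $t_{\tau l}\leqslant t(\mathcal{M},\tau)$ is equivalent to the statement that every net $\{T_\alpha\}\subset S(\mathcal{M},\tau)$ with $T_\alpha\stackrel{t(\mathcal{M})}{\longrightarrow}0$ also satisfies $T_\alpha\stackrel{t_{\tau l}}{\longrightarrow}0$. By the criterion of Proposition~\ref{p_red_top}, this in turn reduces to showing that $T_\alpha P\stackrel{t_\tau}{\longrightarrow}0$ for every fixed $P\in\mathcal{P}(\mathcal{M})$ with $\tau(P)<\infty$.

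So fix such a $P$ and pass to the corner $P\mathcal{M}P$. Since $\tau_P:=\tau|_{P\mathcal{M}P}$ is a faithful normal finite trace, $P\mathcal{M}P$ is finite, $S(P\mathcal{M}P,\tau_P)=LS(P\mathcal{M}P)$, and on this algebra the measure topology $t_{\tau_P}$ coincides with the locally measure topology $t(P\mathcal{M}P)$. To land inside the corner I would work with $|T_\alpha P|=(PT_\alpha^*T_\alpha P)^{1/2}$, which is positive with support under $P$ and hence lies in $LS(P\mathcal{M}P)$. From the hypothesis and relation~(2), multiplication by the bounded operator $P$ gives $T_\alpha P\stackrel{t(\mathcal{M})}{\longrightarrow}0$; since Proposition~\ref{plm-spk1}(ii) characterizes $t(\mathcal{M})$-convergence to zero through the spectral projections of the modulus, it makes $S\stackrel{t(\mathcal{M})}{\longrightarrow}0$ equivalent to $|S|\stackrel{t(\mathcal{M})}{\longrightarrow}0$, and applying this to $S=T_\alpha P$ yields $|T_\alpha P|\stackrel{t(\mathcal{M})}{\longrightarrow}0$. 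Proposition~\ref{p_tl_ind} then gives $|T_\alpha P|\stackrel{t(P\mathcal{M}P)}{\longrightarrow}0$, and the finite-trace identity $t_{\tau_P}=t(P\mathcal{M}P)$ upgrades this to $|T_\alpha P|\stackrel{t_{\tau_P}}{\longrightarrow}0$.

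It remains to lift convergence from the corner back to $\mathcal{M}$. Given a witness $Q'\in\mathcal{P}(P\mathcal{M}P)$, $Q'\leqslant P$, realizing membership of $|T_\alpha P|$ in a $t_{\tau_P}$-neighborhood of zero, the projection $Q=Q'+P^\bot$ satisfies $\tau(Q^\bot)=\tau_P(P-Q')$ and $|T_\alpha P|\,Q=|T_\alpha P|\,Q'$ (because $|T_\alpha P|$ is supported under $P$); hence the same $\varepsilon,\delta$ data place $|T_\alpha P|$ in the corresponding $t_\tau$-neighborhood $U(\varepsilon,\delta)$, so $|T_\alpha P|\stackrel{t_\tau}{\longrightarrow}0$ in $S(\mathcal{M},\tau)$. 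Finally, $T_\alpha P\stackrel{t_\tau}{\longrightarrow}0$ if and only if $|T_\alpha P|\stackrel{t_\tau}{\longrightarrow}0$ (Remark~\ref{r_por_1}), and Proposition~\ref{p_red_top} then delivers $T_\alpha\stackrel{t_{\tau l}}{\longrightarrow}0$, that is, $t_{\tau l}\leqslant t(\mathcal{M},\tau)$.

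The delicate point is the two-sided comparison carried out on the finite corner $P\mathcal{M}P$: one must check that $t_{\tau_P}$ simultaneously agrees with the restriction of $t(\mathcal{M})$ — via Proposition~\ref{p_tl_ind} together with the finite-trace coincidence — and with the restriction of $t_\tau$ — via the extension $Q=Q'+P^\bot$ of corner projections. The replacement of $T_\alpha P$ by its modulus $|T_\alpha P|$ is exactly what forces the relevant operator into $LS(P\mathcal{M}P)$, so that both corner identifications become available; without it $T_\alpha P$ need not be affiliated with $P\mathcal{M}P$ at all.
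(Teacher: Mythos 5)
Your argument is correct and follows essentially the same route as the paper: reduce to a corner $P\mathcal{M}P$ with $\tau(P)<\infty$, use Proposition~\ref{p_tl_ind} together with the coincidence of $t(P\mathcal{M}P)$ with the finite-trace measure topology there, and conclude via Proposition~\ref{p_red_top}. The only difference is one of bookkeeping: the paper pushes $P|T_\alpha|^2P$ into the corner and closes the loop through $t_{w\tau l}$ and Proposition~\ref{p_sv_top}(i), whereas you push $|T_\alpha P|=(P|T_\alpha|^2P)^{1/2}$ into the corner and close it with the strong form of Proposition~\ref{p_red_top} plus the explicit projection extension $Q=Q'+P^\bot$ --- both closings are valid.
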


\begin{proof}
  If $\{T_\alpha\}\subset S(\mathcal{M},\tau)$ and $T_\alpha
  \stackrel{t(\mathcal{M})}{\longrightarrow}0$, then $|T_\alpha|^2
  \stackrel{t(\mathcal{M})}{\longrightarrow}0$. Let $P \in
  \mathcal{P}(\mathcal{M})$ and $\tau(P)<\infty$. By
  Proposition~\ref{p_tl_ind}, we have that $P|T_\alpha|^2 P
  \stackrel{t(P\mathcal{M}P)}{\longrightarrow}0$. Since
  $\tau(P)<\infty$, it follows that
  $LS(P\mathcal{M}P)=S(P\mathcal{M}P,\tau)$ and the topology
  $t(P\mathcal{M}P)$ coincides with the measure topology $t_{\tau}$, that is, $P|T_\alpha|^2 P
  \stackrel{t_{\tau}}{\longrightarrow}0$. By
  Proposition~\ref{p_red_top}, we get that $|T_\alpha|^2
  \stackrel{t_{w\tau l}}{\longrightarrow}0$. Hence, it follows from
  Proposition~\ref{p_sv_top}(i) that $T_\alpha \stackrel{t_{\tau
      l}}{\longrightarrow}0$.
\end{proof}

\begin{note}\label{r_por_3}
  It follows from Theorem~\ref{t_tl>ttl} that the inequalities
  $$
  t_{w\tau l} \leqslant t_{\tau l}\leqslant t(\mathcal{M},\tau)
  \leqslant t_{\tau}
$$
always hold. If $\mathcal{M}=\mathcal{B}(\mathcal{H})\times
L_\infty[0,\infty)$, $\tau((T,f))=tr\, T+\int\limits_0^\infty f
\,d\,\mu$, where $T\in \mathcal{B}_+(\mathcal{H})$, $0\leqslant f\in
L_\infty[0,\infty)$, $\mu$ is the linear Lebesgue measure on
$[0,\infty)$, $\dim \mathcal{H}=\infty$, then
$S(\mathcal{M},\tau)=\mathcal{B}(\mathcal{H})\times
S(L_\infty[0,\infty),\mu)$ and, in this case, the following strict inequalities hold:
$$
t_{w\tau l} < t_{\tau l}< t(\mathcal{M},\tau) < t_{\tau}.
$$

\end{note}

\medskip

To find necessary and sufficient conditions for the topology
$t(\mathcal{M},\tau)$ to coincide with the topologies $t_{\tau l}$
and $t_{w\tau l}$, we will need the following.

\begin{prop}\label{p_t_ekv}
  Let $\mathcal{M}$ be a semifinite von Neumann algebra, $\tau$ be a
  faithful normal semifinite trace on $\mathcal{M}$,
  $\{T_\alpha\}_{\alpha\in A}\subset \mathcal{M}$,
  $\sup\limits_{\alpha\in A}\|T_\alpha\|_{\mathcal{M}}\leqslant 1$.
  \begin{itemize}
  \item [1)] If $\tau(I)<\infty$, then $T_\alpha \stackrel{t_{\tau
      }}{\longrightarrow}0$ if and only if
    $\tau(|T_\alpha|)\longrightarrow 0$.
  \item [2)] If the algebra $\mathcal{M}$ if finite and
    $\Phi: \mathcal{M}\mapsto \mathcal{Z}(\mathcal{M})$ is a
    center-valued trace on $\mathcal{M}$, then the following
    conditions are equivalent:
    \begin{itemize}
    \item [$(i)$] $T_\alpha \stackrel{t_{\tau
          l}}{\longrightarrow}0$;
    \item [$(ii)$] $\Phi(|T_\alpha|)
      \stackrel{t(\mathcal{Z}(\mathcal{M}))}{\longrightarrow}0$;
    \item [$(iii)$] $T_\alpha
      \stackrel{t(\mathcal{M})}{\longrightarrow}0$.
    \end{itemize}
  \end{itemize}
\end{prop}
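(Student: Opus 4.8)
The plan is to prove the two parts by separate mechanisms. Part~1) is a one-variable estimate combining a spectral truncation with the Chebyshev inequality for $\tau$, while Part~2) will be proved as the cycle $(i)\Rightarrow(ii)\Rightarrow(iii)\Rightarrow(i)$, the passage back to $(i)$ being supplied by Part~1) localized to central corners of finite $\tau$-measure.

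For Part~1), since $\|T_\alpha\|_{\mathcal M}\leqslant 1$ we have $0\leqslant |T_\alpha|\leqslant I$ and $\tau(|T_\alpha|)\leqslant\tau(I)<\infty$. Writing $E=E^\bot_\lambda(|T_\alpha|)$ for the spectral projection of $|T_\alpha|$ corresponding to $(\lambda,\infty)$, I would split $\tau(|T_\alpha|)=\tau(|T_\alpha|(I-E))+\tau(|T_\alpha|E)\leqslant\lambda\,\tau(I)+\tau(E)$, using $|T_\alpha|(I-E)\leqslant\lambda I$ and $|T_\alpha|E\leqslant E$; hence if $\tau(E^\bot_\lambda(|T_\alpha|))\to0$ for every $\lambda>0$, then choosing $\lambda$ small forces $\tau(|T_\alpha|)\to0$. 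The reverse implication is the Chebyshev inequality $\lambda\,\tau(E^\bot_\lambda(|T_\alpha|))\leqslant\tau(|T_\alpha|)$. Both directions therefore reduce to the standard description of measure convergence, namely that $T_\alpha\stackrel{t_\tau}{\longrightarrow}0$ is equivalent to $\tau(E^\bot_\lambda(|T_\alpha|))\to0$ for all $\lambda>0$; the only nonroutine inclusion is that $t_\tau$-convergence yields $\tau(E^\bot_\lambda(|T_\alpha|))\leqslant\tau(P^\bot)$ whenever $\|T_\alpha P\|_{\mathcal M}\leqslant\varepsilon<\lambda$, which I would get from the subordination $E^\bot_\lambda(|T_\alpha|)\preceq P^\bot$ obtained from $E^\bot_\lambda(|T_\alpha|)\wedge P=0$ by a Cauchy--Schwarz argument on the common range.

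For Part~2), the implication $(iii)\Rightarrow(i)$ is immediate from Theorem~\ref{t_tl>ttl}, since $\{T_\alpha\}\subset\mathcal M\subset S(\mathcal M,\tau)$ and $t_{\tau l}\leqslant t(\mathcal M,\tau)$. For $(ii)\Rightarrow(iii)$ I would invoke the center-valued Chebyshev inequality $\lambda\,\mathcal D(E^\bot_\lambda(|T_\alpha|))\leqslant\Phi(|T_\alpha|)$ with $\mathcal D=\Phi|_{\mathcal P(\mathcal M)}$; condition $(ii)$ and an order squeeze in the commutative algebra $LS(\mathcal Z(\mathcal M))$ then give $\mathcal D(E^\bot_\lambda(|T_\alpha|))\stackrel{t(\mathcal Z(\mathcal M))}{\longrightarrow}0$ for each $\lambda>0$. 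As $\mathcal M$ is finite every projection is finite, so taking $Z_\alpha=I$ in Proposition~\ref{plm-spk1}(i) turns this into $E^\bot_\lambda(|T_\alpha|)\stackrel{t(\mathcal M)}{\longrightarrow}0$ for all $\lambda>0$, whence $T_\alpha\stackrel{t(\mathcal M)}{\longrightarrow}0$ by Proposition~\ref{plm-spk1}(ii).

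The crux is $(i)\Rightarrow(ii)$, which I would prove by localization to the center. Identify $\mathcal Z(\mathcal M)$ with $L_\infty(\Omega,\Sigma,\mu)$ taking $\mu=\tau|_{\mathcal Z(\mathcal M)}$; since this restriction is semifinite, the finite-measure sets are exactly the central projections $Z_0$ with $\tau(Z_0)<\infty$, and it suffices to show $\Phi(|T_\alpha|)\chi_{Z_0}\to0$ in measure for each such $Z_0$. Fixing $Z_0$, Proposition~\ref{p_red_top} (with $P=Z_0$) gives $T_\alpha Z_0=Z_0T_\alpha\stackrel{t_\tau}{\longrightarrow}0$; reading this inside the finite von Neumann algebra $Z_0\mathcal M$ with the finite trace $\tau(Z_0\,\cdot\,)$ and applying Part~1) to the contraction $Z_0T_\alpha$ (with $|Z_0T_\alpha|=Z_0|T_\alpha|$) yields $\tau(Z_0|T_\alpha|)\to0$. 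By the defining property of the center-valued trace, $\tau(Z_0|T_\alpha|)=\int_{Z_0}\Phi(|T_\alpha|)\,d\mu$, and since $0\leqslant\Phi(|T_\alpha|)\leqslant I$ a last Chebyshev estimate $\mu(\{\Phi(|T_\alpha|)>\varepsilon\}\cap Z_0)\leqslant\varepsilon^{-1}\int_{Z_0}\Phi(|T_\alpha|)\,d\mu\to0$ gives the required local measure convergence, i.e. $(ii)$. I expect the main obstacle to be the bookkeeping of this localization: checking that $t_\tau$-convergence in $S(\mathcal M,\tau)$ restricts to measure-topology convergence in the corner $Z_0\mathcal M$ (which follows because $Z_0$ is central, so $Z_0P$ is a projection for every $P$), and justifying the factorization $\tau(Z_0\,\cdot\,)=\int_{Z_0}\Phi(\,\cdot\,)\,d\mu$ of $\tau$ through the center-valued trace.
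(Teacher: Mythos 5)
Your proposal is correct. Part 1) and the implications $(ii)\Rightarrow(iii)$, $(iii)\Rightarrow(i)$ of Part 2) match the paper's argument (for Part 1) the paper works directly from the neighbourhoods $U(\varepsilon,\delta)$, choosing $P_\alpha$ with $\tau(P_\alpha^\bot)\leqslant\varepsilon$, $\||T_\alpha|P_\alpha\|_{\mathcal M}\leqslant\varepsilon$ and estimating $\tau(|T_\alpha|)\leqslant\varepsilon\tau(I)+\varepsilon$, rather than through the spectral reformulation of $t_\tau$-convergence, but this is the same estimate in different packaging). The genuine divergence is in the crux $(i)\Rightarrow(ii)$. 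The paper does \emph{not} localize to the center: it forms the net $Q_\beta\uparrow I$ of suprema of finite families of finite-trace projections, uses $\Phi(Q_{\beta_0}^\bot)\downarrow 0$ to make the tail $\Phi(Q_{\beta_0}^\bot|T_\alpha|Q_{\beta_0}^\bot)\leqslant\Phi(Q_{\beta_0}^\bot)$ uniformly small, handles the corner term $\Phi(Q_{\beta_0}|T_\alpha|Q_{\beta_0})$ by integrating the finite traces $\nu_E=\int_E\Phi(\cdot)\,d\mu$ over finite-measure central pieces and invoking Part 1), and glues the two via $\Phi(|T_\alpha|)=\Phi(Q|T_\alpha|Q)+\Phi(Q^\bot|T_\alpha|Q^\bot)$, a consequence of $\Phi(XY)=\Phi(YX)$. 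You instead localize directly to central projections $Z_0$ with $\tau(Z_0)<\infty$ and apply Part 1) in the corner $Z_0\mathcal M$; this is shorter and avoids the $V+V\subset U$ splitting, but it rests on two facts you should state explicitly: that $\tau|_{\mathcal Z(\mathcal M)}$ is semifinite (so that such $Z_0$ suffice to test $t(\mathcal Z(\mathcal M))$-convergence), and the factorization $\tau=(\tau|_{\mathcal Z(\mathcal M)})\circ\Phi$ of a normal trace on a finite algebra through the center-valued trace. Both are classical — the first follows from the second together with semifiniteness of $\tau$ by passing to a spectral projection of $\Phi(P)$ for a finite-trace $P$ under a given central projection — and the paper itself uses the same factorization implicitly when it asserts that $\nu_E$ is a trace; your choice $\mu=\tau|_{\mathcal Z(\mathcal M)}$ is covered by the paper's remark that $t(\mathcal Z(\mathcal M))$ is unchanged when $\mu$ is replaced by an equivalent measure. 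So the proposal is sound; what each approach buys is a trade-off between the paper's self-contained but more elaborate two-term decomposition and your cleaner reduction at the price of importing the classical structure theory of traces on finite algebras.
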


\begin{proof}
  1). If $\tau(|T_\alpha|)\rightarrow 0$, then it follows at once
  from the inequality $\tau(\{|T_\alpha|>\lambda\})\leqslant
  \frac{1}{\lambda}\tau(|T_\alpha|)$, $\lambda> 0$, that $T_\alpha
  \stackrel{t_{\tau }}{\longrightarrow}0$ (here we do not need the
  condition that $\sup\limits_{\alpha\in
    A}\|T_\alpha\|_{\mathcal{M}}\leqslant 1$). Conversely, let
  $T_\alpha \stackrel{t_{\tau }}{\longrightarrow}0$. Then for every
  $\varepsilon>0$ there exist $\alpha(\varepsilon)\in A$ and
  $P_\alpha \in \mathcal{P}(\mathcal{M})$, $\alpha\geqslant
  \alpha(\varepsilon)$, such that
$$
\tau(P^\bot_\alpha)\leqslant \varepsilon, \ \ T_\alpha P_\alpha\in
\mathcal{M}, \ \ \|T_\alpha P_\alpha\|_{\mathcal{M}}\leqslant
\varepsilon.
$$
Consequently,
$\||T_\alpha|P_\alpha\|_{\mathcal{M}}\leqslant\varepsilon$ and
$\tau(|T_\alpha|P_\alpha)\leqslant\varepsilon\tau(I)$. Whence,
$$
\tau(|T_\alpha|) \leqslant
\varepsilon\tau(I)+\tau(|T_\alpha|P_\alpha^\bot) \leqslant
\varepsilon\tau(I)+\varepsilon\sup\limits_{\alpha\in
  A}\|T_\alpha\|_{\mathcal{M}}
$$
for all $\alpha\geqslant \alpha(\varepsilon)$, that is,
$\tau(|T_\alpha|)\rightarrow 0$.

2). $(i)\Rightarrow(ii)$. If $T_\alpha \stackrel{t_{\tau
    l}}{\longrightarrow}0$, then $|T_\alpha| \stackrel{t_{\tau l
  }}{\longrightarrow}0$ (Proposition~\ref{p_sv_top}) and thus
$|T_\alpha| \stackrel{t_{w\tau l }}{\longrightarrow}0$. Let
$\mathcal{P}_\tau(\mathcal{M})=\{P\in \mathcal{P}(\mathcal{M}):
\tau(P)<\infty\}$. For every finite subset $\beta=\{P_1, P_2,\ldots,
P_n\}\subset \mathcal{P}_\tau(\mathcal{M})$, let
$Q_\beta=\sup\limits_{1\leqslant i\leqslant n}P_i$. Denote by
$B=\{\beta\}$ the directed set of all finite subsets of
$\mathcal{P}_\tau(\mathcal{M})$, ordered by inclusion. It is clear
that $Q_\beta\uparrow I$ and $Q_\beta\in
\mathcal{P}_\tau(\mathcal{M})$ for all $\beta\in B$.

Let $V, \ U$ be neighborhoods in
$(S(\mathcal{Z}(\mathcal{M})),t(\mathcal{Z}(\mathcal{M})))$ of zero
such that $V+V\subset U$ and $XV\subset V$ for any $X\in
\mathcal{Z}(\mathcal{M})$ with $\|X\|_{\mathcal{M}}\leqslant 1$.
Since $\Phi(Q^\bot_\beta)\downarrow 0$, there exists $\beta_0\in B$
such that $\Phi(Q^\bot_{\beta_0})\in V$. Since
$$
0\leqslant \Phi(Q^\bot_{\beta_0}|T_\alpha|Q^\bot_{\beta_0})\leqslant
\sup\limits_{\alpha\in A}\|T_\alpha\|_{\mathcal{M}}
\Phi(Q^\bot_{\beta_0})\leqslant \Phi(Q^\bot_{\beta_0}),
$$
we have that $\Phi(Q^\bot_{\beta_0}|T_\alpha|Q^\bot_{\beta_0})\in V$
for all $\alpha\in A$. Identify the center
$\mathcal{Z}(\mathcal{M})$ with $L_\infty(\Omega,\Sigma,\mu)$ and,
for $E\in\Sigma$, $\mu(E)<\infty$, consider a faithful normal finite
trace $\nu_E$ on $\chi_E\mathcal{M}$ defined by
$$
\nu_E(X)=\int\limits_E \Phi(X)d\mu.
 $$

 Since $|T_\alpha| \stackrel{t_{w \tau l}}{\longrightarrow}0$, we
 have that $X_\alpha=Q_{\beta_0}|T_\alpha|Q_{\beta_0}\stackrel{t_{\tau
    }}{\longrightarrow}0$. Consequently, $\chi_E X_\alpha \stackrel{
   t_{\tau}}{\longrightarrow}0$ and, hence, $\chi_E X_\alpha \stackrel{
   t_{\nu_E}}{\longrightarrow}0$. Using item~1) we see that
$$
\int\limits_E \Phi(X_\alpha)d\mu=\nu(\chi_EX_\alpha)\longrightarrow
0.
 $$
 Consequently,  $\Phi( X_\alpha) \stackrel{
   t(\mathcal{Z}(\mathcal{M}))}{\longrightarrow}0$. Hence, there
 exists $\alpha(V)\in A$ such that $\Phi(X_\alpha)\in V$ for all
 $\alpha\geqslant\alpha(V)$. Using that $\Phi(XY)=\Phi(YX)$ for $X,Y
 \in \mathcal{M}$~\cite[Sect.~7.11]{SZ.} we get that
$$
\Phi(|T_\alpha|)=\Phi(Q_{\beta_0}|T_\alpha|Q_{\beta_0})+\Phi(Q^\bot_{\beta_0}|T_\alpha|Q^\bot_{\beta_0})\in
V+V\subset U
$$
for $\alpha \geqslant \alpha(V)$, which implies the convergence
$\Phi( |T_\alpha|) \stackrel{
  t(\mathcal{Z}(\mathcal{M}))}{\longrightarrow}0$.

$(ii)\Rightarrow(iii)$. If $\Phi( |T_\alpha|) \stackrel{
  t(\mathcal{Z}(\mathcal{M}))}{\longrightarrow}0$, then it follows
from
$\Phi(E^\bot_\lambda(|T_\alpha|))\leqslant\frac{1}{\lambda}\Phi(|T_\alpha|)$
that
$\Phi(E^\bot_\lambda(|T_\alpha|))\stackrel{t(\mathcal{Z}(\mathcal{M}))}{\longrightarrow}0$
for all $\lambda>0$.

Setting $Z_\alpha=I$ and using Proposition~\ref{plm-spk1}(i) we get
that $E^\bot_\lambda( |T_\alpha|) \stackrel{
  t(\mathcal{M})}{\longrightarrow}0$ for all $\lambda>0$ and, hence,
$T_\alpha \stackrel{ t(\mathcal{M})}{\longrightarrow}0$, see
Proposition~\ref{plm-spk1}(ii).

The implication $(iii)\Rightarrow (i)$ follows from Theorem
~\ref{t_tl>ttl}.
\end{proof}

\begin{thm}\label{t_wtl=tl}
  Let $\mathcal{M}$ be a semifinite von Neumann algebra, $\tau$ be a
  faithful normal semifinite trace on $\mathcal{M}$. The following
  condition are equivalent:
  \begin{itemize}
  \item [$(i)$] $t_{w\tau l}=t(\mathcal{M},\tau)$;
  \item [$(ii)$] $t_{\tau l}=t(\mathcal{M},\tau)$;
  \item [$(iii)$] $\mathcal{M}$ is finite.
  \end{itemize}
\end{thm}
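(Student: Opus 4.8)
The plan is to prove the cyclic chain of implications $(iii)\Rightarrow(ii)\Rightarrow(i)\Rightarrow(iii)$, which lets me leverage the already-established ordering $t_{w\tau l}\leqslant t_{\tau l}\leqslant t(\mathcal{M},\tau)$ from Remark~\ref{r_por_3}. The implication $(ii)\Rightarrow(i)$ is the easiest: since both $t_{w\tau l}$ and $t_{\tau l}$ are sandwiched between each other and $t(\mathcal{M},\tau)$, if $t_{\tau l}=t(\mathcal{M},\tau)$ then the inequality $t_{w\tau l}\leqslant t_{\tau l}=t(\mathcal{M},\tau)\leqslant t_\tau$ combined with $t(\mathcal{M},\tau)\leqslant t_{w\tau l}$—which I would have to extract—forces equality. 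More cleanly, I expect $(i)\Rightarrow(iii)$ and $(iii)\Rightarrow(ii)$ to be the substantive implications, with $(ii)\Rightarrow(i)$ following from $t_{w\tau l}\leqslant t_{\tau l}$ once both are shown equal to $t(\mathcal{M},\tau)$ under condition $(iii)$.

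For the implication $(iii)\Rightarrow(ii)$, assume $\mathcal{M}$ is finite. I must show $t(\mathcal{M},\tau)\leqslant t_{\tau l}$; the reverse inequality is free from Theorem~\ref{t_tl>ttl}. Here I would invoke Proposition~\ref{p_t_ekv}, part 2), which is the key tool engineered for exactly this purpose: for a norm-bounded net in a finite algebra, $t_{\tau l}$-convergence, convergence of the center-valued trace $\Phi(|T_\alpha|)$ in $t(\mathcal{Z}(\mathcal{M}))$, and $t(\mathcal{M})$-convergence all coincide. The obstacle is that Proposition~\ref{p_t_ekv}(2) only applies to norm-bounded nets, whereas a general net in $S(\mathcal{M},\tau)$ need not be bounded. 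I would handle this by the standard reduction through spectral projections: given $T_\alpha\stackrel{t_{\tau l}}{\longrightarrow}0$, use Proposition~\ref{p_sv_top}(i) to pass to $|T_\alpha|$, then truncate via spectral projections $E_\lambda(|T_\alpha|)$, applying the equivalence to the bounded pieces $|T_\alpha|E_\lambda(|T_\alpha|)$ while controlling the tails $E_\lambda^\bot(|T_\alpha|)$ through the criterion in Proposition~\ref{plm-spk1}(ii).

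For the implication $(i)\Rightarrow(iii)$, I argue by contraposition: if $\mathcal{M}$ is not finite, I construct a net converging to zero in $t_{w\tau l}$ but not in $t(\mathcal{M},\tau)$. Since $\mathcal{M}$ is not finite, there is a sequence of pairwise orthogonal, pairwise equivalent projections $\{P_n\}$ with partial isometries $U_n$, $U_n^*U_n=P_1$, $U_nU_n^*=P_n$, exactly as in the proof of Theorem~\ref{t_(o)=tl}. Setting $Q_n=\sup_{j\geqslant n}P_j\downarrow 0$, these converge to zero in $t(\mathcal{M})$ by Remark~\ref{r_(o)=tl}. The subtlety is distinguishing the weak topology $t_{w\tau l}$ from $t(\mathcal{M})$: I expect the relevant example net to be one where multiplication by the $U_n$ produces divergence. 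The natural candidate is a net built from the $U_n$ so that the "corners" $PT_\alpha P$ vanish for every fixed finite-trace $P$ (giving $t_{w\tau l}$-convergence via Proposition~\ref{p_red_top}), yet $t(\mathcal{M})$-convergence fails because translating back through the partial isometries, as in the final steps of the proof of Theorem~\ref{t_(o)=tl}, would force $P_1=0$.

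The main obstacle I anticipate is the construction in $(i)\Rightarrow(iii)$: finding a single net that genuinely separates $t_{w\tau l}$ from $t(\mathcal{M},\tau)$ rather than merely reproducing the separation of $t(\mathcal{M})$ from pointwise behavior. The weak topology only sees compressions $PT_\alpha P$ against fixed finite-trace projections, so a net that escapes "to infinity" along the orthogonal sequence $\{P_n\}$—while each fixed compression eventually annihilates it—should be invisible to $t_{w\tau l}$ yet detected by the full locally-measure topology $t(\mathcal{M})$. The delicate point is verifying both halves simultaneously, and I would model the verification on the contradiction argument already used for $P_1=U_n^*P_nU_n\stackrel{t_h(\mathcal{M})}{\longrightarrow}0$ in Theorem~\ref{t_(o)=tl}, adapting it to the trace topologies via the neighborhood bases $U_\tau(\varepsilon,\delta,P)$ and $U_{w\tau}(\varepsilon,\delta,P)$.
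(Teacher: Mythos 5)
Your treatment of the substantive direction $(iii)\Rightarrow(i),(ii)$ is essentially the paper's: assuming $\mathcal{M}$ finite, pass to $|T_\alpha|$, split it as $|T_\alpha|E_1(|T_\alpha|)+|T_\alpha|E_1^\bot(|T_\alpha|)$, apply Proposition~\ref{p_t_ekv}, item~2, to the norm-bounded pieces and Proposition~\ref{plm-spk1}(ii) to the tails; that part of the plan is sound. The problems are in the converse direction. First, a factual slip: you assert that the projections $Q_n=\sup_{j\geqslant n}P_j\downarrow 0$ ``converge to zero in $t(\mathcal{M})$ by Remark~\ref{r_(o)=tl}.'' That remark says the opposite: if every such decreasing sequence converged in $t(\mathcal{M})$, the algebra would already be finite. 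In a non-finite algebra these $Q_n$ do \emph{not} converge in $t(\mathcal{M})$ (the $P_1=U_n^*P_nU_n$ argument shows this), and that is exactly what makes them usable. You half-recover later in the paragraph, but you leave the actual counterexample net unconstructed (``the delicate point''), when in fact no new net is needed: $Q_n\downarrow 0$ gives $Q_n\stackrel{(o)}{\longrightarrow}0$ in $S_h(\mathcal{M},\tau)$, hence $Q_n\to 0$ in both $t_{\tau l}$ and $t_{w\tau l}$ by Proposition~\ref{p_t0>wtl}, while $Q_n\not\to 0$ in $t(\mathcal{M},\tau)$.

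Second, and more serious, your cycle does not close. You announce $(iii)\Rightarrow(ii)\Rightarrow(i)\Rightarrow(iii)$, but the only justification offered for $(ii)\Rightarrow(i)$ is that it follows ``once both are shown equal to $t(\mathcal{M},\tau)$ under condition $(iii)$'' --- which presupposes $(iii)$, not available inside that implication. Moreover your contrapositive construction, as described, produces a net converging in $t_{w\tau l}$ but not in $t(\mathcal{M},\tau)$; since $t_{w\tau l}\leqslant t_{\tau l}$, such a net refutes only $(i)$ and says nothing about $(ii)$. So in your scheme condition $(ii)$ is never connected to $(iii)$. The repair is to note that the counterexample converges in the \emph{stronger} topology $t_{\tau l}$ as well (as $Q_n$ does), which yields not-$(iii)\Rightarrow$ not-$(ii)$ directly. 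For comparison, the paper avoids any construction here: it derives $(i)\Rightarrow(ii)$ and $(ii)\Rightarrow(iii)$ from the joint continuity of multiplication in $t(\mathcal{M})$ combined with Bikchentaev's theorem that joint continuity of multiplication in $t_{w\tau l}$ (resp.\ $t_{\tau l}$) forces $t_{\tau l}=t_{w\tau l}$ and $\mathcal{M}$ finite. Your direct route is more elementary and would work, but only after the two corrections above.
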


\begin{proof}
  $(i)\Rightarrow (ii)$. If $t_{w\tau l}=t(\mathcal{M},\tau)$, then
  the operation of multiplication in $(S(\mathcal{M},\tau),t_{w\tau
    l})$ is jointly continuous. In this case, as was shown
  in~\cite[Theorem~4.1]{Bik2}, $t_{\tau l}=t_{w\tau l}$ and
  $\mathcal{M}$ is of finite type. The implication
  $(ii)\Rightarrow(iii)$ is proved similarly.

  $(iii) \Rightarrow (i)$. Let $\mathcal{M}$ be a finite von Neumann algebra. Then
  $t_{w\tau l}=t_{\tau l}$~\cite[Theorem~4.1]{Bik2}. Let
  $\{T_\alpha\}\subset S(\mathcal{M},\tau)$ and $T_\alpha \stackrel{
    t_{w\tau l}}{\longrightarrow}0$. It follows from the identity
  $t_{w\tau l}=t_{\tau l}$ and Proposition~\ref{p_sv_top} that
  $|T_\alpha| \stackrel{ t_{w\tau l}}{\longrightarrow}0$.

  If $\lambda\geqslant 1$, we have that $0\leqslant
  E^\bot_\lambda(|T_\alpha|)\leqslant |T_\alpha|$, hence
  $E^\bot_\lambda(|T_\alpha|) \stackrel{ t_{w\tau
      l}}{\longrightarrow}0$ by Proposition~\ref{p_sv_top}. Using
  Proposition~\ref{p_t_ekv}, item~2, we get that
  $E^\bot_\lambda(|T_\alpha|) \stackrel{
    t(\mathcal{M})}{\longrightarrow}0$ for all $\lambda\geqslant 1$. Since
$$
E^\bot_\lambda(|T_\alpha|E^\bot_1(|T_\alpha|))=
\begin{cases}{ E^\bot_1(|T_\alpha|), \ \ \ \ \ \ \ 0<\lambda < 1,} \\
  {E^\bot_\lambda(|T_\alpha|), \ \ \ \ \ \ \ \lambda\geqslant 1,}
\end{cases}
$$
by Proposition~\ref{plm-spk1}$(ii)$ we get that
$|T_\alpha|E^\bot_1(|T_\alpha|) \stackrel{
  t(\mathcal{M})}{\longrightarrow}0$. Now, it follows from the
inequality $|T_\alpha|E_1(|T_\alpha|)\leqslant|T_\alpha|$ that
$|T_\alpha|E_1(|T_\alpha|) \stackrel{ t_{w\tau
    l}}{\longrightarrow}0$. Since $t_{w\tau l}=t_{\tau l}$ and
$\|\,|T_\alpha|E_1(|T_\alpha|)\,\|_{\mathcal{M}}\leqslant 1$, we
have that $|T_\alpha|E_1(|T_\alpha|) \stackrel{
  t(\mathcal{M})}{\longrightarrow}0$ by Proposition~\ref{p_t_ekv},
item~2.

Hence, $|T_\alpha|=|T_\alpha|E_1(|T_\alpha|)+|T_\alpha|
E^\bot_1(|T_\alpha|) \stackrel{ t(\mathcal{M})}{\longrightarrow}0$,
and so $T_\alpha \stackrel{ t(\mathcal{M})}{\longrightarrow}0$. With
a use of Theorem~\ref{t_tl>ttl} this shows that $t_{w\tau l}=
t(\mathcal{M},\tau)$.
\end{proof}

\section{Comparison of the topologies $t_{\tau l}$
  and $t_{w \tau
    l}$ with the $(o)$-topology
  on $S_h(\mathcal{M},\tau)$}

Let us denote by $t_{h \tau l}$ (resp., $t_{h w\tau l}$) the
topology on $S_h(\mathcal{M},\tau)$ induced by the topology $t_{\tau
  l}$ (resp., $t_{w \tau l}$), and find a
connection between these topologies and the $(o)$-topology
$t_{o\tau}(\mathcal{M})$.

\begin{prop}\label{p_t0>wtl}
  $t_{h w\tau l} \leqslant t_{h\tau l}\leqslant t_{o\tau}(\mathcal{M})$.
\end{prop}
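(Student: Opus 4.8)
The inequality $t_{hw\tau l}\leqslant t_{h\tau l}$ is immediate: restricting the relation $t_{w\tau l}\leqslant t_{\tau l}$ (noted just before Proposition~\ref{p_red_top}) to $S_h(\mathcal{M},\tau)$ gives it at once. For the second inequality I would exploit that $t_{o\tau}(\mathcal{M})$ is, by definition, the strongest topology on $S_h(\mathcal{M},\tau)$ for which $(o)$-convergence implies topological convergence; hence $t_{h\tau l}\leqslant t_{o\tau}(\mathcal{M})$ is equivalent to the assertion that every net $\{T_\alpha\}\subset S_h(\mathcal{M},\tau)$ with $T_\alpha\stackrel{(o)}{\longrightarrow}0$ also satisfies $T_\alpha\stackrel{t_{\tau l}}{\longrightarrow}0$. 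This is what I would prove.

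Fix such a net and choose $\{S_\alpha\}\subset S_h(\mathcal{M},\tau)$ with $S_\alpha\downarrow 0$ and $-S_\alpha\leqslant T_\alpha\leqslant S_\alpha$. Since $S_\alpha\downarrow 0$ is $(o)$-convergence to $0$, Remark~\ref{r_por_1} gives $S_\alpha\stackrel{t_\tau}{\longrightarrow}0$, and in particular $S_\alpha\stackrel{t_{w\tau l}}{\longrightarrow}0$. The crucial step is to control $T_\alpha$ in spite of the fact that $-S_\alpha\leqslant T_\alpha\leqslant S_\alpha$ does \emph{not} imply $|T_\alpha|\leqslant S_\alpha$ or $T_\alpha^2\leqslant S_\alpha^2$. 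Fix $\alpha_0$ and put $X=(I+S_{\alpha_0})^{-\frac12}\in\mathcal{M}$, so $0\leqslant X\leqslant I$ and $X^{-1}=(I+S_{\alpha_0})^{\frac12}\in S(\mathcal{M},\tau)$. For $\alpha\geqslant\alpha_0$ the operator matrix $\left(\begin{smallmatrix} S_\alpha & T_\alpha\\ T_\alpha & S_\alpha\end{smallmatrix}\right)$ is positive (this is exactly $S_\alpha\pm T_\alpha\geqslant 0$); conjugating it by $\mathrm{diag}(X,X)$ yields a positive matrix with entries $Y_\alpha=XS_\alpha X$ and $X_\alpha=XT_\alpha X$, which now lie in $\mathcal{M}$ with $0\leqslant Y_\alpha\leqslant I$ and $-Y_\alpha\leqslant X_\alpha\leqslant Y_\alpha$. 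The elementary bounded inequality $X_\alpha^2\leqslant\|Y_\alpha\|_{\mathcal{M}}\,Y_\alpha\leqslant Y_\alpha$ (obtained from the factorisation $X_\alpha=Y_\alpha^{1/2}KY_\alpha^{1/2}$ with $\|K\|_{\mathcal{M}}\leqslant 1$, afforded by the positivity of the $2\times2$ matrix) then conjugates back, via $X^{-1}$, to $T_\alpha X^2 T_\alpha\leqslant S_\alpha$, that is $(XT_\alpha)^*(XT_\alpha)\leqslant S_\alpha$.

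Now $|XT_\alpha|^2=(XT_\alpha)^*(XT_\alpha)\leqslant S_\alpha\stackrel{t_{w\tau l}}{\longrightarrow}0$, so Proposition~\ref{p_sv_top}(iii) gives $|XT_\alpha|^2\stackrel{t_{w\tau l}}{\longrightarrow}0$ and hence, by Proposition~\ref{p_sv_top}(i), $XT_\alpha\stackrel{t_{\tau l}}{\longrightarrow}0$. To remove the factor $X$ I would localise: by Proposition~\ref{p_red_top} it suffices to show $T_\alpha P\stackrel{t_\tau}{\longrightarrow}0$ for every $P\in\mathcal{P}(\mathcal{M})$ with $\tau(P)<\infty$. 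For such $P$, again by Proposition~\ref{p_red_top}, $XT_\alpha P\stackrel{t_\tau}{\longrightarrow}0$; and since $(S(\mathcal{M},\tau),t_\tau)$ is a topological $*$-algebra, left multiplication by the fixed element $X^{-1}\in S(\mathcal{M},\tau)$ is $t_\tau$-continuous, whence $T_\alpha P=X^{-1}(XT_\alpha P)\stackrel{t_\tau}{\longrightarrow}0$. Proposition~\ref{p_red_top} then yields $T_\alpha\stackrel{t_{\tau l}}{\longrightarrow}0$, which is exactly the claim.

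The main obstacle is precisely the failure of the naive squeeze: neither $|T_\alpha|\leqslant S_\alpha$ nor $T_\alpha^2\leqslant S_\alpha^2$ holds in general, and multiplication is not jointly $t_{\tau l}$-continuous, so one cannot simply push the order bound through a square nor conjugate by the unbounded $X^{-1}$ inside $t_{\tau l}$. The device that overcomes both difficulties is the conjugation by the fixed bounded $X$, which confines the relevant operators to $\mathcal{M}$ (where the matrix inequality $X_\alpha^2\leqslant\|Y_\alpha\|_{\mathcal{M}}Y_\alpha$ is available) and lets the final un-truncation be carried out in the measure topology $t_\tau$ on the finite corners, where multiplication \emph{is} continuous.
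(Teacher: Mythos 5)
There is a genuine gap at the very first step of your treatment of the second inequality. You assert that $S_\alpha\downarrow 0$ together with Remark~\ref{r_por_1} gives $S_\alpha\stackrel{t_\tau}{\longrightarrow}0$. Remark~\ref{r_por_1} says $t_{o\tau}(\mathcal{M})\leqslant t_{h\tau}$, i.e.\ the $(o)$-topology is \emph{coarser} than the induced measure topology; it yields that $t_\tau$-convergence implies convergence in the $(o)$-topology, not the converse. The implication you use --- $(o)$-convergence (in particular $S_\alpha\downarrow 0$) forces $t_\tau$-convergence --- is precisely condition $(i)$ of Proposition~\ref{p_(o)=ttau} and is equivalent to $\tau(I)<\infty$; it fails for any decreasing sequence of projections with infinite trace (see Proposition~\ref{p_t-kon}), and indeed the strict inequality $t_{o\tau}(\mathcal{M})<t_{h\tau}$ for $\tau(I)=\infty$ is one of the main points of this section. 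What you actually need is only the weaker (and true) statement $S_\alpha\stackrel{t_{w\tau l}}{\longrightarrow}0$, but that requires its own argument: for each $P\in\mathcal{P}(\mathcal{M})$ with $\tau(P)<\infty$ one has $PS_\alpha P\downarrow 0$ in the reduced algebra, where the trace \emph{is} finite, so Proposition~\ref{p_(o)=ttau} applies there to give $PS_\alpha P\stackrel{t_\tau}{\longrightarrow}0$, and Proposition~\ref{p_red_top} then yields $S_\alpha\stackrel{t_{w\tau l}}{\longrightarrow}0$. This localization is exactly the opening step of the paper's proof and cannot be skipped.

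Once that step is repaired, the remainder of your argument is correct and takes a genuinely different route from the paper. The paper passes from $T_\alpha\downarrow 0$ to dominated nets $0\leqslant S_\alpha\leqslant T_\alpha$ via Proposition~\ref{p_sv_top}(iii), upgrades $t_{w\tau l}$ to $t_{\tau l}$ by writing $S_\alpha=\sqrt{S_\alpha}\cdot\sqrt{S_\alpha}$ and checking $t_\tau$-boundedness of $\{\sqrt{S_\alpha}\}$ through the rearrangement $\mu_t$, and then reruns the $Z_\alpha=X_\alpha+Y_\alpha$ scheme from Theorem~\ref{t_(o)=tl}. Your factorization $X_\alpha=Y_\alpha^{1/2}KY_\alpha^{1/2}$ giving $T_\alpha X^2T_\alpha\leqslant S_\alpha$, followed by removal of $X$ through $t_\tau$-continuity of multiplication by $X^{-1}$ on the finite corners, replaces both the boundedness lemma and the $Z_\alpha$ decomposition with a single operator inequality; it is a clean alternative, but it rests entirely on first knowing $S_\alpha\stackrel{t_{w\tau l}}{\longrightarrow}0$, which is the step you got wrong.
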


\begin{proof}
  Let $\{T_\alpha\}_{\alpha\in A}\subset S_h(\mathcal{M},\tau)$,
  $T_\alpha\downarrow 0$, $P\in \mathcal{P}(\mathcal{M})$,
  $\tau(P)<\infty$. Since $(PT_\alpha P)\downarrow 0$, we have that
  $PT_\alpha P \stackrel{t_{h \tau}}{\longrightarrow}0$ by
  Proposition~\ref{p_(o)=ttau}. Consequently, $T_\alpha
  \stackrel{t_{w \tau l}}{\longrightarrow}0$ by
  Proposition~\ref{p_red_top}. Let $0\leqslant S_\alpha\leqslant
  T_\alpha$, $S_\alpha \in S_h(\mathcal{M},\tau)$. By
  Proposition~\ref{p_sv_top}(iii), $S_\alpha \stackrel{t_{w \tau
      l}}{\longrightarrow}0$ and, hence, $\sqrt{S_\alpha}
  \stackrel{t_{\tau l}}{\longrightarrow}0$ by
  Proposition~\ref{p_sv_top}(i). Let us show that $S_\alpha
  \stackrel{t_{ \tau l}}{\longrightarrow}0$.

  Let $\mu_t(T)=\inf\{\|TP\|_{\mathcal{M}}: P\in
  \mathcal{P}(\mathcal{M}), \ \tau(P^\bot)\leqslant t\}$, $t>0$, be
  a non-increasing rearrangement of the operator $T$. Fix
  $\alpha_0\in A$. For every $\alpha\geqslant\alpha_0$, we have
  $$
  \mu_t(\sqrt{S_\alpha})=\sqrt{\mu_t(S_\alpha)}\leqslant
  \sqrt{\mu_t(T_\alpha)}\leqslant \sqrt{\mu_t(T_{\alpha_0})},
$$
in particular,
$$
\sup\limits_{\alpha\geqslant\alpha_0}\mu_t(\sqrt{S_\alpha})\leqslant
\sqrt{\mu_t(T_{\alpha_0})}<\infty
$$
for all $t>0$. Consequently, the net
$\{\sqrt{S_\alpha}\}_{\alpha\geqslant\alpha_0}$ is
$t_\tau$-bounded~\cite[Lemma~1.2]{Bik2} and, hence, $S_\alpha
\stackrel{t_{\tau l}}{\longrightarrow}0$ by
Proposition~\ref{p_sv_top}(ii). Repeating now the
proof of the implication $(ii)\Rightarrow (i)$ in Theorem
~\ref{t_(o)=tl} we get that $t_{h\tau l}\leqslant
t_{o\tau}(\mathcal{M})$.

The inequality $t_{h w \tau l}\leqslant t_{h \tau l}$ follows from
the inequality $t_{ w \tau l}\leqslant t_{\tau l}$.
\end{proof}

\begin{corr}\label{c_twtaul=ttau 1}
  \begin{itemize}
  \item [$(i)$] If $t_{w\tau l}=t_\tau$ (resp., $t_{\tau
      l}=t_\tau$), then $\tau(I)<\infty$.
  \item [$(ii)$] If $t_{h w\tau l}=t_{h\tau l}$, then the algebra
    $\mathcal{M}$ is finite.
  \item [$(iii)$] If $t_{h w \tau l}=t_{h \tau}$ (resp. $t_{h\tau
      l}=t_{h \tau}$), then $\tau(I)<\infty$.
  \end{itemize}
\end{corr}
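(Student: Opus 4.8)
The plan is to derive all three parts from ``sandwich'' arguments using the chains of inequalities already established together with the equivalences proved earlier; only part $(ii)$ requires a genuinely new idea. For part $(i)$, I would start from the universally valid chain $t_{w\tau l}\leqslant t_{\tau l}\leqslant t(\mathcal{M},\tau)\leqslant t_\tau$ (Remark~\ref{r_por_3}). If $t_{w\tau l}=t_\tau$ (the case $t_{\tau l}=t_\tau$ is identical), every term in this chain collapses to one topology; in particular $t_{w\tau l}=t(\mathcal{M},\tau)$ and $t(\mathcal{M},\tau)=t_\tau$. The first equality forces $\mathcal{M}$ to be finite by Theorem~\ref{t_wtl=tl}, and then the second equality, combined with finiteness, yields $\tau(I)<\infty$ by Proposition~\ref{p_t-kon}.

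For part $(iii)$, I would combine Proposition~\ref{p_t0>wtl} with Remark~\ref{r_por_1} to obtain, on $S_h(\mathcal{M},\tau)$, the chain $t_{hw\tau l}\leqslant t_{h\tau l}\leqslant t_{o\tau}(\mathcal{M})\leqslant t_{h\tau}$. Under either hypothesis $t_{hw\tau l}=t_{h\tau}$ or $t_{h\tau l}=t_{h\tau}$, the whole chain collapses, so in particular $t_{o\tau}(\mathcal{M})=t_{h\tau}$; the equivalence $(ii)\Leftrightarrow(iii)$ of Proposition~\ref{p_(o)=ttau} then gives $\tau(I)<\infty$.

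The one step that needs a fresh argument is part $(ii)$, and this is where the main (though modest) obstacle lies: I must pass from coincidence of the \emph{induced} topologies on the selfadjoint part to coincidence of the full topologies on $S(\mathcal{M},\tau)$. The key observation is that the basic neighborhoods $U_{w\tau}(\varepsilon,\delta,P)$ are invariant under the involution, since $QT^*Q=(QTQ)^*$ and $\|QT^*Q\|_{\mathcal{M}}=\|QTQ\|_{\mathcal{M}}$; hence the involution is $t_{w\tau l}$-continuous. Consequently, if $T_\alpha\stackrel{t_{w\tau l}}{\longrightarrow}0$, then its real and imaginary parts $\tfrac12(T_\alpha+T_\alpha^*)$ and $\tfrac{1}{2i}(T_\alpha-T_\alpha^*)$, which lie in $S_h(\mathcal{M},\tau)$, also converge to $0$ in $t_{w\tau l}$, hence in $t_{hw\tau l}=t_{h\tau l}$, hence in $t_{\tau l}$. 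Since $t_{\tau l}$ is a vector topology, recombining these parts gives $T_\alpha\stackrel{t_{\tau l}}{\longrightarrow}0$. This yields $t_{\tau l}\leqslant t_{w\tau l}$, and with the always-valid reverse inequality we obtain $t_{w\tau l}=t_{\tau l}$; by \cite[Theorem~4.1]{Bik2} this equality holds precisely when $\mathcal{M}$ is finite. I expect the only delicate point to be keeping the reduction to selfadjoint nets clean, namely that convergence in the subspace topology $t_{hw\tau l}$ is literally convergence in $t_{w\tau l}$ for selfadjoint nets, which is immediate from the definition of the induced topology.
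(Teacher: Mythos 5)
Your proposal is correct, and parts $(ii)$ and $(iii)$ coincide with the paper's own argument: for $(ii)$, the involution-invariance of the neighborhoods $U_{w\tau}(\varepsilon,\delta,P)$, the decomposition into $\mathrm{Re}\,T_\alpha$ and $\mathrm{Im}\,T_\alpha$, the passage through the hypothesis on the selfadjoint part, and the final appeal to \cite[Theorem~4.1]{Bik2}; for $(iii)$, the collapse of the chain $t_{hw\tau l}\leqslant t_{h\tau l}\leqslant t_{o\tau}(\mathcal{M})\leqslant t_{h\tau}$ supplied by Proposition~\ref{p_t0>wtl} and Remark~\ref{r_por_1}, followed by Proposition~\ref{p_(o)=ttau}. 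The only genuine divergence is in part $(i)$. The paper stays entirely on the order-topology side: from $t_{w\tau l}=t_\tau$ it first gets $t_{\tau l}=t_\tau$, then $t_{h\tau}=t_{h\tau l}\leqslant t_{o\tau}(\mathcal{M})$ by Proposition~\ref{p_t0>wtl}, and concludes $\tau(I)<\infty$ directly from Proposition~\ref{p_(o)=ttau}, never needing finiteness of $\mathcal{M}$ as an intermediate step. You instead collapse the chain of Remark~\ref{r_por_3}, extract finiteness of $\mathcal{M}$ from the equality $t_{w\tau l}=t(\mathcal{M},\tau)$ via Theorem~\ref{t_wtl=tl}, and then apply Proposition~\ref{p_t-kon} to the remaining equality $t(\mathcal{M},\tau)=t_\tau$. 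Both routes are short and rest only on results already established; the paper's version reuses the same two propositions that settle part $(iii)$, which is slightly more economical, while yours has the merit of making explicit where finiteness of the algebra enters. In either form the corollary is fully proved.
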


\begin{proof}
  $(i)$. It follows from $t_{w\tau l}=t_\tau$ that $t_{\tau
    l}=t_\tau$. Consequently, $t_{h \tau}\leqslant
  t_{o \tau}(\mathcal{M})$ and, hence, $\tau(I)<\infty$ by
  Proposition~\ref{p_(o)=ttau}.

  $(ii)$. If $T_\alpha \stackrel{t_{w \tau l}}{\longrightarrow}0$,
  then $T_\alpha^* \stackrel{t_{w \tau l}}{\longrightarrow}0$ and,
  hence,
  $$
  Re \,T_\alpha=\frac{1}{2}(T_\alpha + T^*_\alpha) \stackrel{t_{w
      \tau l}}{\longrightarrow}0, \ \ Im
  \,T_\alpha=\frac{1}{2i}(T_\alpha - T^*_\alpha) \stackrel{t_{w \tau
      l}}{\longrightarrow}0.
 $$
 Since $t_{h w\tau l}=t_{h\tau l}$, we get that $T_\alpha
 \stackrel{t_{ \tau l}}{\longrightarrow}0$. Hence, $t_{w \tau l}=t_{
   \tau l}$, which implies that $\mathcal{M}$ is finite ~\cite{Bik2}.

 Item $(iii)$ follows from Propositions~\ref{p_(o)=ttau}
 and~\ref{p_t0>wtl}.
\end{proof}

\begin{corr}\label{c_twtaul=ttau}
  The following conditions are equivalent:
  \begin{itemize}
  \item [$(i)$] $t_{w\tau l}=t_\tau$;
  \item [$(ii)$] $t_{\tau l}=t_\tau$;
  \item [$(iii)$] $\tau(I)< \infty$.
  \end{itemize}
\end{corr}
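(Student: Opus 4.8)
The plan is to recognize that this corollary is an immediate consequence of results already assembled, so that no genuinely new argument is required: everything reduces to citing the earlier statements in the correct order and closing the cycle of implications.

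First I would dispatch the two forward implications $(i)\Rightarrow(iii)$ and $(ii)\Rightarrow(iii)$. These are precisely the content of Corollary~\ref{c_twtaul=ttau 1}$(i)$, where it was shown that $t_{w\tau l}=t_\tau$ (respectively $t_{\tau l}=t_\tau$) forces $\tau(I)<\infty$. Thus both implications are already established, and I would simply invoke that corollary, reading its ``resp.'' phrasing as covering the two cases $(i)$ and $(ii)$ separately.

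For the converse I would prove $(iii)\Rightarrow(i)$ and $(iii)\Rightarrow(ii)$ at once. When $\tau(I)<\infty$ the trace $\tau$ is finite, and it was observed right after the definition of $t_{w\tau l}$ and $t_{\tau l}$ that in this case all three topologies $t_{w\tau l}$, $t_{\tau l}$, and $t_\tau$ coincide. In particular one gets both $t_{w\tau l}=t_\tau$ and $t_{\tau l}=t_\tau$, i.e. both $(i)$ and $(ii)$.

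These four implications together yield the full equivalence, so there is essentially no obstacle: the substantive work was already carried out in Corollary~\ref{c_twtaul=ttau 1}$(i)$ (whose proof in turn rests on Proposition~\ref{p_(o)=ttau} and Proposition~\ref{p_t0>wtl}) and in the elementary collapse of the three topologies under a finite trace. The only point demanding care is to make sure the parenthetical ``resp.'' in the cited corollary is genuinely unpacked into two independent implications, so that both $(i)$ and $(ii)$ are seen to imply $(iii)$.
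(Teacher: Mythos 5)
Your proposal is correct and follows essentially the same route as the paper: the paper closes the cycle $(i)\Rightarrow(ii)\Rightarrow(iii)\Rightarrow(i)$ using the chain $t_{w\tau l}\leqslant t_{\tau l}\leqslant t_\tau$, Propositions~\ref{p_(o)=ttau} and~\ref{p_t0>wtl}, and the collapse of the three topologies for a finite trace, which are exactly the ingredients behind Corollary~\ref{c_twtaul=ttau 1}$(i)$ that you invoke. The only difference is organizational (a star of four implications versus a three-step cycle), and both are valid.
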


\begin{proof}
  The implication $(i)\Rightarrow (ii)$ follows from the
  inequalities $t_{w\tau l}\leqslant t_{\tau l}\leqslant t_\tau$,
  $(ii)\Rightarrow (iii)$ from Propositions~\ref{p_(o)=ttau}
  and~\ref{p_t0>wtl}, and the implication $(iii)\Rightarrow (i)$ is clear.
\end{proof}

\medskip

By Proposition~\ref{p_(o)=ttau}, if $\tau(I)<\infty$, we have the
following:
$$
t_{h w \tau l}=t_{h\tau l}=t_{o\tau}(\mathcal{M})=t_{h\tau}.
$$

\medskip

The following theorem permits to construct examples of von Neumann
algebras $\mathcal{M}$ for which
$$
t_{h w\tau l}< t_{h\tau l}< t_{o \tau}(\mathcal{M})<t_{h\tau}.
$$

\begin{thm}\label{t_th=tot}
  If $t_{h\tau l}=t_{o\tau}(\mathcal{M})$, then $\tau(I)<\infty$.
\end{thm}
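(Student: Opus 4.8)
The plan is to argue by contraposition: assuming $\tau(I)=\infty$, I will produce a net in $S_h(\mathcal{M},\tau)$ that converges to $0$ in $t_{h\tau l}$ but not in $t_{o\tau}(\mathcal{M})$. Since $t_{h\tau l}\leqslant t_{o\tau}(\mathcal{M})$ always holds by Proposition~\ref{p_t0>wtl}, exhibiting one such net already shows $t_{h\tau l}\neq t_{o\tau}(\mathcal{M})$, which is exactly the contrapositive of the assertion. The guiding model is the commutative infinite-measure case: realizing an abelian corner as $L_\infty[0,\infty)$ with $\tau$ the Lebesgue integral, the sequence $f_n=n\chi_{[n,n+1]}$ tends to $0$ locally in measure, since for each finite-measure $B$ one has $\mu(\{|f_n\chi_B|>\varepsilon\})=\mu(B\cap[n,n+1])\to 0$; yet no subsequence is order bounded in $S_h(\mathcal{M},\tau)$, because the supremum of any subsequence is $\sum_k n_k\chi_{[n_k,n_k+1]}$, whose super-level sets all have infinite measure, so it is not $\tau$-measurable. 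This \emph{escape of mass to infinity} is the phenomenon that separates the local topology from the order topology when $\tau(I)=\infty$.

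To realize this in general, I would build a net as follows. Let $A$ be the set of pairs $(P,n)$ with $P\in\mathcal{P}(\mathcal{M})$, $\tau(P)<\infty$, and $n\in\mathbf{N}$, directed by $(P,n)\leqslant(P',n')$ iff $P\leqslant P'$ and $n\leqslant n'$. Because $\tau$ is semifinite and $\tau(I)=\infty$, so that $\tau(I-P)=\infty$ for every such $P$, one may choose a projection $E_{(P,n)}\leqslant I-P$ with trace bounded below, and set $T_{(P,n)}=n\,E_{(P,n)}\in S_h(\mathcal{M},\tau)$. For a fixed $Q\in\mathcal{P}(\mathcal{M})$ with $\tau(Q)<\infty$, every index $(P,n)\geqslant(Q,1)$ satisfies $E_{(P,n)}\leqslant I-P\leqslant I-Q$, whence $T_{(P,n)}Q=0$; thus $T_\alpha Q\stackrel{t_{\tau}}{\longrightarrow}0$ for all finite-trace $Q$, and Proposition~\ref{p_red_top} yields $T_\alpha\stackrel{t_{\tau l}}{\longrightarrow}0$, i.e. convergence in $t_{h\tau l}$.

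It remains to see that $\{T_\alpha\}$ does not converge to $0$ in $t_{o\tau}(\mathcal{M})$. Here I would invoke the order-topology principle already used for the $l_\infty$ and $L_0$ examples above (\cite{Vul.}): a net that $t_{o\tau}$-converges admits an order-bounded subnet. If some subnet $\{T_{\alpha_\beta}\}$ were order bounded by $S\in S_h(\mathcal{M},\tau)_+$, then $n_\beta E_{\alpha_\beta}\leqslant S$ with $n_\beta\to\infty$; arranging the supports so that the projections occurring cofinally along any subnet are eventually mutually orthogonal of trace bounded below, the solidity of $S(\mathcal{M},\tau)$ inside $LS(\mathcal{M})$ forces $\tau(\{S>\lambda\})=\infty$ for all $\lambda>0$, contradicting $\tau$-measurability of $S$. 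Hence no subnet is order bounded, and $\{T_\alpha\}$ is not $t_{o\tau}$-convergent.

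The main obstacle is this last step, and it has two facets. First, the supports $E_\alpha$ must be chosen so that along \emph{every} subnet the amplitudes blow up while the supports accumulate infinite trace above each level; this is arranged by absorbing the previously used supports into the projection $P$ of the index, so that later supports live in shrinking tails $I-P$ and are eventually orthogonal to the earlier ones, but the bookkeeping is delicate. Second, the order-topology principle has to be applied at the level of nets rather than sequences: for factor summands such as $\mathcal{B}(\mathcal{H})$ every $t_{\tau l}$-convergent \emph{sequence} is norm bounded by the uniform boundedness principle, so a genuine net is indispensable, and I expect the technical effort to concentrate there. A clean alternative, paralleling the reduction in Theorem~\ref{t_t0=tl} and the localization in Proposition~\ref{p_tl_ind}, is to pass first to a suitable infinite-trace abelian subalgebra on which $t_{h\tau l}$ restricts to local measure convergence and $t_{o\tau}(\mathcal{M})$ to its own order topology, thereby reducing the whole statement to the commutative model above.
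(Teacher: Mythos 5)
Your overall strategy (contrapositive, ``escape of mass to infinity'', operators of the form $n$ times a finite-trace projection living in tails $I-P$) is the right intuition and matches the paper's in spirit, and your $t_{\tau l}$-convergence argument via Proposition~\ref{p_red_top} is fine. But the decisive step --- certifying that your net does \emph{not} converge in $t_{o\tau}(\mathcal{M})$ --- rests on the asserted ``order-topology principle'' that every $t_{o\tau}$-convergent net admits an order-bounded subnet. That is not a theorem you can quote: the $(o)$-topology is merely the finest topology in which $(o)$-convergent nets converge, and for \emph{nets} topological convergence in it is in general strictly weaker than having $(o)$-convergent (or even order-bounded) subnets; the subsequence version you use for the $l_\infty$ and $L_0$ models is a sequential fact from Vulikh and does not transfer to nets. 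You yourself flag this as the place where ``the technical effort concentrates,'' but no argument is supplied, and the subsequent orthogonality bookkeeping and the appeal to solidity to get $\tau(\{S>\lambda\})=\infty$ are likewise left as sketches. Your fallback --- restricting to an infinite-trace abelian subalgebra --- has its own gap: the $(o)$-topology of $S_h(\mathcal{M},\tau)$ need not induce the intrinsic $(o)$-topology of a subalgebra unless one proves regularity of the sublattice and $(o)$-closedness of the subalgebra, which is exactly the delicate point.

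The paper closes this gap by changing what is exhibited: instead of a net that converges in one topology but not the other, it produces a \emph{set} $F=\{\sqrt{n}P_n\}_{n=1}^\infty$ (with $P_n$ pairwise orthogonal, $1\leqslant\tau(P_n)<\infty$) that is closed in $t_{o\tau}(\mathcal{M})$ but not in $t_{h\tau l}$. Closedness in the $(o)$-topology is checkable because on a $\sigma$-finite corner $P\mathcal{M}P$, $P=\sup_n P_n$, a faithful normal positive functional lets one extract from every decreasing net a decreasing \emph{sequence} with the same infimum, reducing $(o)$-closedness to sequences, where the order-boundedness argument ($\sqrt{n_k}P_{n_k}\leqslant S$ forces $\tau(P_{n_k})\to 0$, a contradiction) is legitimate; one then checks that $S_h(P\mathcal{M}P,\tau)$ is $(o)$-closed in $S_h(\mathcal{M},\tau)$ with the induced $(o)$-topology equal to the intrinsic one. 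Non-closedness of $F$ in $t_{h\tau l}$ is shown by proving $0$ lies in the $\sigma$-strong closure of $F$ (if $V(\varphi,\varepsilon)\cap F=\emptyset$ then $\varphi(P_n)>\varepsilon^2/n$ for all $n$, contradicting $\sum_n\varphi(P_n)<\infty$) and that $t_\sigma$ majorizes $t_{\tau l}$ on $\mathcal{M}$. Note that here too no \emph{sequence} from $F$ need converge to $0$ in $t_{\tau l}$ (your uniform-boundedness remark is exactly why); the argument works with closure points, not convergent sequences. To repair your proof you would need either to prove your net principle for the order topology of nets (unlikely to hold) or to recast your construction in terms of a $t_{o\tau}$-closed set, which essentially lands you on the paper's argument.
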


\begin{proof}
  Assume that $\tau(I)=+\infty$ and first consider the
  $\sigma$-finite von Neumann algebra $\mathcal{M}$. In this case,
  there is a faithful normal positive linear functional $\varphi$ on
  $\mathcal{M}$~\cite[Ch.~II, $\S\,3$]{Take1.}. If we have a net
  $\{T_\alpha\}\subset \mathcal{M}_+$ and $T_\alpha\downarrow 0$,
  then $\varphi(T_\alpha)\downarrow 0$ and, hence, there is a
  sequence of indices $\alpha_1\leqslant\alpha_2\leqslant\ldots$
  such that $\varphi(T_{\alpha_n})\downarrow 0$, which implies
  $T_{\alpha_n}\downarrow 0$.

  Let a net $\{T_\alpha\}\subset S_h(\mathcal{M},\tau)$
  $(o)$-converge to zero in $S_h(\mathcal{M},\tau)$. Choose
  $S_\alpha\in S_+(\mathcal{M},\tau)$ such that $-S_\alpha\leqslant
  T_\alpha\leqslant S_\alpha$ and $S_\alpha\downarrow 0$. Fix
  $\alpha_0$ and set $X=(I+S_{\alpha_0})^{-\frac{1}{2}}$,
  $Y_\alpha=XS_\alpha X$, $\alpha\geqslant \alpha_0$. Then $Y_\alpha
  \in \mathcal{M}_+$, $Y_\alpha\downarrow 0$, and hence there is a
  sequence $\alpha_1\leqslant\alpha_2\leqslant\ldots$ such that
  $Y_{\alpha_n}\downarrow 0$. Consequently, $S_{\alpha_n}\downarrow
  0$ and $T_{\alpha_n} \stackrel{ (o)}{\longrightarrow}0$.

  Hence, the subset $F\subset S_h(\mathcal{M},\tau)$ is closed in
  the $(o)$-topology $t_{o\tau}(\mathcal{M})$ if and only if $F$
  contains $(o)$-limits of all $(o)$-convergent sequences of
  elements in $F$.

  Choose a sequence $\{P_n\}$ of nonzero pairwise orthogonal
  projections in $\mathcal{P}(\mathcal{M})$ satisfying $1\leqslant
  \tau(P_n)<\infty$ and show that $F=\{\sqrt{n}P_n\}_{n=1}^\infty$
  is closed in the $(o)$-topology $t_{o\tau}(\mathcal{M})$.

  If $\{T_k\}_{k=1}^\infty\subset F$ is an $(o)$-convergent sequence
  of pairwise distinct elements, then
  $T_k=\sqrt{n_k}P_{n_k}\leqslant S$, $k=1,2,\ldots$, for some $S\in
  S_+(\mathcal{M},\tau)$ and, hence, $0\leqslant P_{n_k}\leqslant
  \frac{1}{\sqrt{n_k}}S\stackrel{ t_\tau}{\longrightarrow}0$.
  Consequently, $\tau(P_{n_k})\rightarrow 0$, which contradicts the
  inequality $\tau(P_{n_k})\geqslant 1$, $k=1,2,\ldots$

  Hence, the set $F$ is closed in the $(o)$-topology
  $t_{o\tau}(\mathcal{M})$.

  It remains to show that this set $F=\{\sqrt{n}P_n\}_{n=1}^\infty$
  is not closed in the topology $t_{w\tau l}$ (here we do not use
  that the algebra $\mathcal{M}$ is $\sigma$-finite).

  Denote by $\mathcal{M}_*^+$ the set of all positive normal linear
  functionals on $\mathcal{M}$, and let $t_\sigma$ be the
  $\sigma$-strong topology on $\mathcal{M}$ generated by the family of
  seminorms $p_\psi(T)=\psi(T^*T)^{\frac{1}{2}}$, $\psi\in
  \mathcal{M}_*^+$, $T\in \mathcal{M}$~\cite[Ch.~II,
  $\S\,2$]{Take1.}.  It is clear that the linear functional
  $\varphi_Q(T)=\tau(QTQ)$ belongs to $\mathcal{M}_*^+$ for all
  $Q\in \mathcal{P}_{\tau}(\mathcal{M})$ such that $\tau(Q)<\infty$. Thus,
  the convergence $T_\alpha\stackrel{ t_\sigma}{\longrightarrow}0$
  implies that $\tau(QT_\alpha^*T_\alpha
  Q)=\varphi_Q(T_\alpha^*T_\alpha)=p^2_{\varphi_Q}(T_\alpha)\rightarrow 0$, that
  is, $|T_\alpha Q|^2=QT^*_\alpha T_\alpha Q \stackrel{
    t_\tau}{\longrightarrow}0$. By~\cite{Tish} we have that
  $|T_\alpha Q|\stackrel{ t_\tau}{\longrightarrow}0$ and, hence,
  $T_\alpha Q\stackrel{ t_\tau}{\longrightarrow}0$, that is,
  $T_\alpha\stackrel{ t_{\tau l}}{\longrightarrow}0$ by
  Proposition~\ref{p_red_top}. Consequently, the topology $t_\sigma$
  majorizes the topology $t_{\tau l}$ on $\mathcal{M}$.

  Let us now show that $T=0$ belongs to the closure of the set $F$
  in the topology $t_\sigma$. The sets
$$
V(\varphi_1,\ldots,\varphi_n,\varepsilon)=\{T\in \mathcal{M}: p_{\varphi_i}(T)\leqslant \varepsilon, \ i=1,2,\ldots, n\}
$$
form a neighborhood base around zero in the topology $t_\sigma$, where
$\{\varphi_i\}_{i=1}^n\subset \mathcal{M}^+_*$, $\varepsilon>0$,
$n\in \mathbf{N}$. If $\varphi=\sum\limits_{i=1}^n \varphi_i$, then
$\varphi\in \mathcal{M}^+_*$ and $p_{\varphi_i}(T)\leqslant
p_\varphi(T)$, $i=1,2,\ldots, n$. Hence, the system of subsets
$\{V(\varphi,\varepsilon): \varphi\in \mathcal{M}^+_*,
\varepsilon>0\}$ is a neighborhood base around zero in the topology
$t_\sigma$. If $V(\varphi,\varepsilon)\cap F=\emptyset$, then
$\varepsilon<p_\varphi(\sqrt{n}P_n)=\sqrt{n}\varphi(P_n)^{\frac{1}{2}}$
and, hence, $\varphi(P_n)>\frac{\varepsilon^2}{n}$ for all
$n=1,2,\ldots$, which is impossible, since
$\sum\limits_{n=1}^\infty\varphi(P_n)=\varphi(\sup\limits_{n\geqslant
  1}P_n)<+\infty$.

Consequently, $V(\varphi,\varepsilon)\cap F\neq \emptyset$ for all
$\varphi\in \mathcal{M}^+_*$, $\varepsilon>0$. This means that $T=0$
belongs to the closure of the set $F$ in the topology $t_\sigma$.
Since $t_\sigma$ majorizes the topology $t_{\tau l}$ on
$\mathcal{M}$, zero belongs to the closure of the set $F$ in the
topology $t_{\tau l}$. Consequently, the set $F$ is not closed in
$(S_h(\mathcal{M},\tau),t_{h\tau l})$ and, hence, $t_{h\tau l}<
t_{o\tau}(\mathcal{M})$.

Let now $\mathcal{M}$ be a not $\sigma$-finite von Neumann algebra,
$\{P_n\}_{n=1}^\infty\subset \mathcal{P}(\mathcal{M})$ be as before,
$P=\sup\limits_{n\geqslant 1}P_n$, and $\mathcal{A}=P\mathcal{M}P$.
It is clear that $\varphi(T)=\sum\limits_{n=1}^\infty\frac{\tau(P_n
  T P_n)}{2^n\tau(P_n)}$, $T\in \mathcal{A}$, is a faithful normal
linear functional on $\mathcal{A}$, and thus the algebra
$\mathcal{A}$ is $\sigma$-finite~\cite[Ch.~II, $\S\,3$]{Take1.}.

Let $\{T_\alpha\}_{\alpha\in A}\subset S_h(\mathcal{A},\tau)$, $T\in
S_h(\mathcal{M},\tau)$, and
$T_\alpha\stackrel{(o)}{\longrightarrow}T$ in
$S_h(\mathcal{M},\tau)$, that is, there is a net
$\{S_\alpha\}_{\alpha\in A}\subset S_+(\mathcal{M},\tau)$ such that
$-S_\alpha \leqslant T_\alpha-T \leqslant S_\alpha$ and
$S_\alpha\downarrow 0$. Then
$$
-PS_\alpha P\leqslant P(T_\alpha-T)P=T_\alpha -PTP\leqslant
PS_\alpha P
$$
and $PS_\alpha P\downarrow 0$, that is,
$T_\alpha\stackrel{(o)}{\longrightarrow}PTP$ in
$S_h(\mathcal{A},\tau)$ and in $S_h(\mathcal{M}, \tau)$.
Consequently, $T=PTP$ so that $T\in S_h(\mathcal{A},\tau)$. This
means that $S_h(\mathcal{A},\tau)$ is closed in
$(S_h(\mathcal{M},\tau), t_{o\tau}(\mathcal{M}))$, and the
$(o)$-topology $t_{o\tau}(\mathcal{M})$ induces the $(o)$-topology
$t_{o\tau}(\mathcal{A})$ on $S_h(\mathcal{A},\tau)$. In particular,
the set $F=\{\sqrt{n}P_n\}_{n=1}^\infty$ is closed in
$(S_h(\mathcal{M},\tau), t_{o\tau}(\mathcal{M}))$, although it is
not closed in the topology $t_{h\tau l}$.
\end{proof}

\medskip

Proposition~\ref{p_t0>wtl} and Theorem~\ref{t_th=tot} immediately
give the following.

\begin{corr}\label{c_str_ner}
  \begin{itemize}
  \item [$(i)$] If $t_{hw\tau l}=t_{o\tau}(\mathcal{M})$, then $\tau(I)<\infty$.

  \item [$(ii)$] If  $\mathcal{M}$ is not finite, then
    $ t_{h w\tau l}< t_{h\tau l}< t_{o \tau}(\mathcal{M})<t_{h\tau}.$
\item[$(iii)$] If $\mathcal{M}$ is finite and
  $\tau(I)=+\infty$, then
  $t_{h w\tau l}= t_{h\tau l}< t_{o \tau}(\mathcal{M})<t_{h\tau}.$
  \end{itemize}
\end{corr}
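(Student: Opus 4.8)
The plan is to read off all three assertions from the chain of inequalities $t_{h w\tau l}\leqslant t_{h\tau l}\leqslant t_{o\tau}(\mathcal{M})$ supplied by Proposition~\ref{p_t0>wtl}, together with the inequality $t_{o\tau}(\mathcal{M})\leqslant t_{h\tau}$ coming from Remark~\ref{r_por_1}, and then to upgrade the relevant ``$\leqslant$'' signs to strict inequalities by invoking the rigidity results already established. The key observation is that each consecutive pair in the chain
$$
t_{h w\tau l}\leqslant t_{h\tau l}\leqslant t_{o\tau}(\mathcal{M})\leqslant t_{h\tau}
$$
can coincide only under a hypothesis that the present assumptions are designed to exclude: $t_{h w\tau l}=t_{h\tau l}$ forces $\mathcal{M}$ finite (Corollary~\ref{c_twtaul=ttau 1}$(ii)$); $t_{h\tau l}=t_{o\tau}(\mathcal{M})$ forces $\tau(I)<\infty$ (Theorem~\ref{t_th=tot}); and $t_{o\tau}(\mathcal{M})=t_{h\tau}$ forces $\tau(I)<\infty$ (the equivalence $(ii)\Leftrightarrow(iii)$ of Proposition~\ref{p_(o)=ttau}). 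So the whole proof reduces to matching each desired strict inequality with the statement that rules out the corresponding equality.

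For $(i)$, if $t_{h w\tau l}=t_{o\tau}(\mathcal{M})$ then, being squeezed between them, $t_{h\tau l}$ coincides with both, so in particular $t_{h\tau l}=t_{o\tau}(\mathcal{M})$, and Theorem~\ref{t_th=tot} immediately yields $\tau(I)<\infty$. For $(ii)$ I would first record that a von Neumann algebra carrying a faithful normal finite trace is finite, so a non-finite $\mathcal{M}$ forces $\tau(I)=+\infty$; then all three equalities listed above are forbidden, and the non-strict inequalities of Proposition~\ref{p_t0>wtl} and Remark~\ref{r_por_1} sharpen to the strict chain $t_{h w\tau l}<t_{h\tau l}<t_{o \tau}(\mathcal{M})<t_{h\tau}$.

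For $(iii)$ the first inequality becomes an equality: since $\mathcal{M}$ is finite, $t_{w\tau l}=t_{\tau l}$ by \cite[Theorem~4.1]{Bik2}, whence $t_{h w\tau l}=t_{h\tau l}$ on the selfadjoint part; the remaining two inequalities stay strict because the hypothesis $\tau(I)=+\infty$ again excludes the equalities coming from Theorem~\ref{t_th=tot} and Proposition~\ref{p_(o)=ttau}, giving $t_{h w\tau l}=t_{h\tau l}<t_{o \tau}(\mathcal{M})<t_{h\tau}$. There is no genuine obstacle here, as the substance has been front-loaded into Proposition~\ref{p_t0>wtl} and the three rigidity statements; the only point demanding a moment's care is the bookkeeping — attaching each strict inequality to the precise result that forbids the corresponding collapse, and noting that ``$\mathcal{M}$ not finite'' is exactly what supplies $\tau(I)=+\infty$ in part $(ii)$.
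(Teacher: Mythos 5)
Your proof is correct and follows essentially the same route as the paper, which derives the corollary directly from the chain $t_{hw\tau l}\leqslant t_{h\tau l}\leqslant t_{o\tau}(\mathcal{M})\leqslant t_{h\tau}$ (Proposition~\ref{p_t0>wtl} and Remark~\ref{r_por_1}) together with the rigidity statements of Corollary~\ref{c_twtaul=ttau 1}$(ii)$, Theorem~\ref{t_th=tot} and Proposition~\ref{p_(o)=ttau}; the paper merely compresses this into ``immediately give.'' Your explicit bookkeeping, including the observation that non-finiteness of $\mathcal{M}$ forces $\tau(I)=+\infty$ and the use of \cite[Theorem~4.1]{Bik2} for the equality in part $(iii)$, fills in exactly the steps the paper leaves implicit.
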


\section{ The locally measure topology on atomic algebras}

Necessary and sufficient conditions on the algebra $\mathcal{M}$ so
that the topology $t_{\tau l}$ would be locally convex (resp.,
normable) were given in the paper of A.~M.~Bikchentaev~\cite{Bik3}.
Let us give a similar criterion for the topology $t(\mathcal{M})$.

A nonzero projection $P\in \mathcal{P}(\mathcal{M})$ is called an
atom if $0\neq Q \leqslant P$, $Q\in \mathcal{P}(\mathcal{M})$,
implies that $Q=P$.

A von Neumann algebra $\mathcal{M}$ is atomic if every nonzero
projection in $\mathcal{M}$ majorizes some atom. Any atomic von
Neumann algebra $\mathcal{M}$ is $*$-isomorphic to the
$C^*$-product
$$
C^*-\prod\limits_{j\in J}\mathcal{M}_j=\{\{T_j\}_{j\in J}: T_j\in
\mathcal{M}_j, \ \sup\limits_{j\in
  J}\|T_j\|_{\mathcal{M}_j}<+\infty\},
$$
where $\mathcal{M}_j=\mathcal{B}(\mathcal{H}_j)$, $j\in J$. Since
$LS(\mathcal{B}(\mathcal{H}_j))=\mathcal{B}(\mathcal{H}_j)$ and
$LS(C^*-\prod\limits_{j\in J}\mathcal{M}_j)=\prod\limits_{j\in
  J}LS(\mathcal{M}_j)$~\cite[Ch.~II, $\S3$]{Mur_m}, we have that, for an atomic von
Neumann algebra $\mathcal{M}$, the $*$-algebra $LS(\mathcal{M})$ is
$*$-isomorphic to the direct product $\prod\limits_{j\in
  J}\mathcal{B}(\mathcal{H}_j)$ of the algebras
$\mathcal{B}(\mathcal{H}_j)$. By Proposition~\ref{plm-spk1}, the
topology $t(\mathcal{M})$ coincides with the Tychonoff product of the
topologies $t(\mathcal{B}(\mathcal{H}_j))$. Since
$t(\mathcal{B}(\mathcal{H}_j))$ is a uniform topology
$t_{\|\cdot\|_{\mathcal{B}(\mathcal{H}_j)}}$ on
$\mathcal{B}(\mathcal{H}_j)$ generated by the norm
$\|\cdot\|_{\mathcal{B}(\mathcal{H}_j)}$, the topology
$t(\mathcal{M})$ is locally convex. For every $0\leqslant
\{T_j\}_{j\in J}\in C^*-\prod\limits_{j\in
  J}\mathcal{B}(\mathcal{H}_j)$, set $\tau(\{T_j\}_{j\in
  J})=\sum\limits_{j\in J} tr_j(T_j)$, where $tr_j$ is the canonical
trace on $\mathcal{B}(\mathcal{H}_j)$. It is clear that $\tau$ is a
faithful normal semifinite trace on the atomic von Neumann algebra
$\mathcal{M}=C^*-\prod\limits_{j\in J}\mathcal{M}_j$, and the
topology $t_{\tau l}$ is also locally convex~\cite{Bik3}, however,
$t_{\tau l}\neq t(\mathcal{M})$ if $ \dim \mathcal{H}_j=\infty$ for
at least one index $j\in J$.

\begin{prop}\label{p_tm_lv}
  The topology $t(\mathcal{M})$ is locally convex if and only if
  $\mathcal{M}$ is $*$-isomorphic to the $C^*$-product
  $C^*-\prod\limits_{j\in J}\mathcal{M}_j$, where $\mathcal{M}_j$
  are factors of type $I$ or type $III$.
\end{prop}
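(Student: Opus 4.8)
The plan is to prove the two implications separately, in each case reducing the question to the behaviour of $t(\mathcal{M})$ on the center $\mathcal{Z}(\mathcal{M})$ and on the corners $P\mathcal{M}P$.

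For sufficiency, suppose $\mathcal{M}=C^*-\prod\limits_{j\in J}\mathcal{M}_j$ with each $\mathcal{M}_j$ a factor of type $I$ or $III$. In either case $LS(\mathcal{M}_j)=\mathcal{M}_j$: for type $I$ this is $\mathcal{B}(\mathcal{H}_j)$, while for type $III$ one has $\mathcal{P}_{fin}(\mathcal{M}_j)=\{0\}$, so every (locally) measurable operator is already bounded. Moreover $t(\mathcal{M}_j)$ coincides with the uniform topology on $\mathcal{M}_j$; the verification is the one carried out for $\mathcal{B}(\mathcal{H})$ in the Introduction, the only new point being that in a factor the central projection $Z$ appearing in $V(B,\varepsilon,\delta)$ can only be $0$ or $I$. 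By \cite{Mur_m}, $LS(\mathcal{M})=\prod\limits_{j\in J}LS(\mathcal{M}_j)=\prod\limits_{j\in J}\mathcal{M}_j$, and by Proposition~\ref{plm-spk1} the topology $t(\mathcal{M})$ is the Tychonoff product of the $t(\mathcal{M}_j)$. A Tychonoff product of locally convex topologies is locally convex, so $t(\mathcal{M})$ is locally convex.

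For necessity, assume $t(\mathcal{M})$ is locally convex. First I would localise to the center. Since $t(\mathcal{M})$ induces $t(\mathcal{Z}(\mathcal{M}))$ on $LS(\mathcal{Z}(\mathcal{M}))$ (the remark following Proposition~\ref{plm-spk1}), the induced topology on the linear subspace $S(\mathcal{Z}(\mathcal{M}))$ is locally convex. Identifying $\mathcal{Z}(\mathcal{M})$ with $L_\infty(\Omega,\Sigma,\mu)$, this is the local measure topology on $L_0(\Omega,\Sigma,\mu)$. Were $\mu$ to have a diffuse part, multiplication by $\chi_{B_0}$ for a diffuse $B_0$ of finite measure would exhibit $L_0(B_0)$ with ordinary convergence in measure as a subspace, which is not locally convex \cite{Sar.}; hence $\mu$ is purely atomic, i.e. $\mathcal{Z}(\mathcal{M})$ is atomic. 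Its atoms are minimal central projections $\{Z_j\}_{j\in J}$ with $\sup_j Z_j=I$, and $\mathcal{M}=C^*-\prod\limits_{j\in J}\mathcal{M}_j$ with each $\mathcal{M}_j=Z_j\mathcal{M}$ a factor.

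It remains to show that each $\mathcal{M}_j$ is of type $I$ or $III$; this is the main obstacle. By Proposition~\ref{p_tl_ind} (with $P=Z_j$) the topology $t(\mathcal{M})$ induces $t(\mathcal{M}_j)$, so $t(\mathcal{M}_j)$ is locally convex. Suppose, for contradiction, that some $\mathcal{M}_j$ is of type $II$. Choosing a finite projection $P\in\mathcal{P}(\mathcal{M}_j)$ (with $P=I$ in the $II_1$ case) and applying Proposition~\ref{p_tl_ind} once more inside $\mathcal{M}_j$, the corner $\mathcal{N}=P\mathcal{M}_jP$ is a type $II_1$ factor whose topology $t(\mathcal{N})$ is still locally convex. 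In $\mathcal{N}$ I would choose a diffuse abelian von Neumann subalgebra $\mathcal{A}\cong L_\infty[0,1]$ — a MASA, which is automatically diffuse since $\mathcal{N}$ has no minimal projections. Because $\mathcal{Z}(\mathcal{N})=\mathbf{C}$, Proposition~\ref{plm-spk1}(ii) shows that a net in $S(\mathcal{A})$ converges to $0$ in $t(\mathcal{N})$ if and only if $\tau(E^\bot_\lambda(|T_\alpha|))\to 0$ for every $\lambda>0$; as the spectral projections of elements of $S(\mathcal{A})$ lie in $\mathcal{A}$, this is precisely convergence in measure in $S(\mathcal{A})=L_0[0,1]$. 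Thus $t(\mathcal{N})$ induces on the subspace $S(\mathcal{A})$ the measure topology of $L_0[0,1]$, which is not locally convex \cite{Sar.}, a contradiction. Hence no $\mathcal{M}_j$ is of type $II$, and the proof is complete.
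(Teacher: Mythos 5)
Your proof is correct, and its overall architecture is the same as the paper's: sufficiency via $LS(\mathcal{M}_j)=\mathcal{M}_j$ and the Tychonoff product of norm topologies; necessity by first deducing that $\mathcal{Z}(\mathcal{M})$ is atomic from the non-local-convexity of $L_0$ over a diffuse measure space, then passing to a type $II_1$ corner $P\mathcal{M}P$ via Proposition~\ref{p_tl_ind} to exclude type $II$ summands. The one place you genuinely diverge is the final contradiction in the type $II_1$ corner: the paper simply cites \cite{Sar.} for the fact that $S(P\mathcal{M}P)$ with $t(P\mathcal{M}P)$ admits no nonzero continuous linear functional, whence it cannot be locally convex, whereas you restrict to a diffuse MASA $\mathcal{A}$, use Proposition~\ref{plm-spk1} together with the triviality of $\mathcal{Z}(P\mathcal{M}P)$ to identify the induced topology on $S(\mathcal{A})\cong L_0[0,1]$ with convergence in measure, and conclude from the non-local-convexity of that space. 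Your route is more self-contained and reuses the same commutative fact already needed for the center, at the cost of the small extra verifications that $S(\mathcal{A})\subset S(P\mathcal{M}P)$ and that the trace restricted to $\mathcal{A}$ is a diffuse probability measure; the paper's route is shorter but leans on an external result about the dual of $S$ of a $II_1$ factor. Both are sound.
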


\begin{proof}
  Let $t(\mathcal{M})$ be a locally convex topology on
  $LS(\mathcal{M})$. Since $t(\mathcal{M})$ induces the topology
  $t(\mathcal{Z}(\mathcal{M}))$ on $\mathcal{Z}(\mathcal{M})$, we
  have that $(S(\mathcal{Z}(\mathcal{M})),
  t(\mathcal{Z}(\mathcal{M})))$ is a locally convex space. It
  follows from~\cite[Ch.~V, $\S3$]{Sar.} that
  $\mathcal{Z}(\mathcal{M})$ is an atomic von Neumann algebra.
  Hence, the algebra $\mathcal{M}$ is $*$-isomorphic to the
  $C^*$-product $C^*-\prod\limits_{j\in J}\mathcal{M}_j$, where
  $\mathcal{M}_j$ are factors for all $j\in J$. Let $M_{j_0}$ be of
  type $II$-factor. Then there exists a nonzero finite projection
  $P\in \mathcal{P}(\mathcal{M})$ such that $P\mathcal{M}P$ is of
  type $II_1$. It follows
  from ~\cite[Ch.~V~$\S\,3$]{Sar.}  that $S(P\mathcal{M}P, t(P\mathcal{M}P))$ has not nonzero continuous linear
  functional and,
  hence, the topology $t(P\mathcal{M}P)$ can not be locally convex.
  By Proposition~\ref{p_tl_ind}, the topology $t(\mathcal{M})$ can
  not be locally convex too. Consequently, $\mathcal{M}_j$ are
  either of type $I$ or type $III$ factors for all $j\in J$.

  Conversely, let $\mathcal{M}=C^*-\prod\limits_{j\in
    J}\mathcal{M}_j$, where $\mathcal{M}_j$ are of type $I$ or type
  $III$ factors. Then $LS(\mathcal{M}_j)=\mathcal{M}_j$,
  $t(\mathcal{M}_j)=t_{\|\cdot\|_{\mathcal{M}_j}}$,
  $LS(\mathcal{M})=\prod\limits_{j\in J}\mathcal{M}_j$ and, hence,
  the topology $t(\mathcal{M})$ is a Tychonoff product of the normed
  topologies $t(\mathcal{M}_j)$, that is, $t(\mathcal{M})$ is a
  locally convex topology.
\end{proof}

\begin{corr}\label{c_t_norm}
  The topology $t(\mathcal{M})$ can be normed if and only if
  $\mathcal{M}=\prod\limits_{j=1}^n \mathcal{M}_j$, where
  $\mathcal{M}_j$ are of type $I$ or type $III$ factors, $j=1,2,...,n
  $, and $n$ is a positive integer.
\end{corr}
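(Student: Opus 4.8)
The plan is to derive this from Proposition~\ref{p_tm_lv} together with the elementary fact that an infinite Tychonoff product of nontrivial topological vector spaces is never normable. First I would observe that every normable topology is in particular locally convex, so if $t(\mathcal{M})$ is normable then Proposition~\ref{p_tm_lv} applies and $\mathcal{M}$ is $*$-isomorphic to $C^*-\prod_{j\in J}\mathcal{M}_j$ with each $\mathcal{M}_j$ a factor of type $I$ or type $III$. As recorded in the proof of Proposition~\ref{p_tm_lv}, in this situation $LS(\mathcal{M})=\prod_{j\in J}\mathcal{M}_j$ and $t(\mathcal{M})$ is the Tychonoff product of the normed topologies $t(\mathcal{M}_j)=t_{\|\cdot\|_{\mathcal{M}_j}}$. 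It then remains only to show that the index set $J$ must be finite.

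The key step is to rule out infinite $J$. A normable space must be locally bounded, that is, it must admit a bounded neighborhood of zero. In the product topology, every neighborhood $W$ of zero contains a basic neighborhood of the form $\prod_{j\in F}U_j\times\prod_{j\notin F}\mathcal{M}_j$ with $F\subset J$ finite; consequently $W$ contains the entire linear subspace $L_F=\{(T_j)_{j\in J}: T_j=0\ \text{for}\ j\in F\}$. If $J$ is infinite then $L_F\neq\{0\}$, and a nonzero linear subspace of a Hausdorff topological vector space is never bounded, since scaling any fixed nonzero vector by arbitrarily large scalars keeps it inside the subspace while forcing it out of any prescribed bounded set. Hence no neighborhood of zero is bounded, so $t(\mathcal{M})$ is not locally bounded and therefore not normable. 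This contradiction shows that $J$ is finite, which gives the stated form $\mathcal{M}=\prod_{j=1}^{n}\mathcal{M}_j$.

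For the converse I would argue directly: if $\mathcal{M}=\prod_{j=1}^{n}\mathcal{M}_j$ is a finite product of type $I$ or type $III$ factors, then $LS(\mathcal{M})=\prod_{j=1}^{n}\mathcal{M}_j$ and $t(\mathcal{M})$ is the product of the finitely many normed topologies $t(\mathcal{M}_j)$, which is itself normed, for instance by $\|(T_j)_{j=1}^{n}\|=\max_{1\leqslant j\leqslant n}\|T_j\|_{\mathcal{M}_j}$. The main obstacle is the middle step: one must argue carefully that the product topology fails to be even locally bounded once $J$ is infinite, and this is precisely where the \emph{tail-subspace} observation, combined with the unboundedness of nonzero subspaces in a Hausdorff topological vector space, does the essential work. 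The local convexity classification of Proposition~\ref{p_tm_lv} supplies the type-$I$/type-$III$ constraint for free, so the corollary reduces entirely to the finiteness of $J$.
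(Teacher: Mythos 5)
Your proof is correct, but it reaches the finiteness of the index set by a different route than the paper. The paper first uses normability of the induced topology $t(\mathcal{Z}(\mathcal{M}))$ on the center together with a cited result from~\cite[Ch.~V, $\S\,3$]{Sar.} to conclude that $\mathcal{Z}(\mathcal{M})$ is finite dimensional; this immediately yields $\mathcal{M}=\prod_{j=1}^{n}\mathcal{M}_j$ with factors $\mathcal{M}_j$, and only then invokes Proposition~\ref{p_tm_lv} to restrict the types to $I$ and $III$. You instead apply Proposition~\ref{p_tm_lv} first, obtaining the full $C^*$-product decomposition over a possibly infinite index set $J$ and the identification of $t(\mathcal{M})$ with the Tychonoff product of norm topologies, and then rule out infinite $J$ by the elementary observation that every basic product neighborhood of zero contains a nonzero tail subspace, which cannot be bounded in a Hausdorff topological vector space, so the topology is not locally bounded and hence not normable. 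Your argument is self-contained where the paper leans on an external reference for the commutative case, at the (small) cost of having to make explicit the identification of $t(\mathcal{M})$ with the product topology before the finiteness step; both arguments use Proposition~\ref{p_tm_lv} for the type-$I$/type-$III$ restriction, and your converse via the max norm is the same ``obvious'' implication the paper leaves unwritten.
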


\begin{proof}
  If the topology $t(\mathcal{M})$ is normable, then
  $(S(\mathcal{Z}(\mathcal{M})), t(\mathcal{Z}(\mathcal{M})))$ is a
  normable vector space. It follows from~\cite[Ch.~V,~$\S\,3$]{Sar.}
  that $\mathcal{Z}(\mathcal{M})$ is a finite dimensional algebra,
  which implies that $\mathcal{M}=\prod\limits_{j=1}^n
  \mathcal{M}_j$, where $\mathcal{M}_j$ are factors, $j=1,2,...,n$.
  By Proposition~\ref{p_tm_lv}, the factors $\mathcal{M}_j$ are
  either of type $I$ or type $III$ for all $j=1,2,..., n$.

  The converse implication is obvious.
\end{proof}

\medskip

Let us also mention one more useful property of the topologies
$t(\mathcal{M})$ if $\mathcal{M}$ is an atomic finite algebra.

\begin{prop}\label{p_tm_conat}
  The following conditions are equivalent:
  \begin{itemize}
  \item [$(i)$] $\mathcal{M}$ is an atomic finite von Neumann algebra;
  \item [$(ii)$] if $\{T_n\}_{n=1}^\infty\subset LS_h(\mathcal{M})$,
    then $T_n \stackrel{t(\mathcal{M})}{\longrightarrow}0$ if and
    only if $T_n \stackrel{(o)}{\longrightarrow}0$.
  \end{itemize}
\end{prop}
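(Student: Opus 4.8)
The plan is to treat the two implications separately, since they are governed by different mechanisms: finiteness is exactly what forces $(o)$-convergence to imply $t(\mathcal{M})$-convergence (this is the sequential content of Theorem~\ref{t_(o)=tl}), while atomicity is what forces the converse implication for sequences. Throughout I use that, for an atomic finite algebra, the discussion preceding Proposition~\ref{p_tm_lv} identifies $\mathcal{M}$ with a $C^*$-product $C^*\text{-}\prod_{j\in J}M_{n_j}$ with $n_j=\dim\mathcal{H}_j<\infty$, so that $LS(\mathcal{M})=\prod_{j\in J}M_{n_j}$, the topology $t(\mathcal{M})$ is the Tychonoff product of the norm topologies, and the order is coordinatewise.

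For $(i)\Rightarrow(ii)$ the direction ``$(o)\Rightarrow t(\mathcal{M})$'' is immediate from Theorem~\ref{t_(o)=tl}, since a finite algebra satisfies $t_h(\mathcal{M})\leqslant t_o(\mathcal{M})$. For the reverse direction I would argue directly in the product model. If $T_n\stackrel{t(\mathcal{M})}{\longrightarrow}0$, then for each $j$ the coordinate $(T_n)_j\to 0$ in the norm of $M_{n_j}$, so $c_{n,j}:=\sup_{m\geqslant n}\|(T_m)_j\|_{M_{n_j}}$ is finite and decreases to $0$ in $n$. Setting
$$R_n=\bigl(c_{n,j}\,1_{n_j}\bigr)_{j\in J}\in LS_h(\mathcal{M}),$$
one has $|(T_n)_j|\leqslant\|(T_n)_j\|\,1_{n_j}\leqslant c_{n,j}1_{n_j}$, hence $-R_n\leqslant T_n\leqslant R_n$, while $R_n\downarrow 0$ and $-R_n\uparrow 0$ because suprema and infima in the product are computed coordinatewise. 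Thus $T_n\stackrel{(o)}{\longrightarrow}0$. Note this step uses finiteness only through $n_j<\infty$ (to get norm convergence and scalar dominators) and atomicity through the product structure.

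For $(ii)\Rightarrow(i)$ I would first get finiteness for free: under $(ii)$ every sequence of projections $Q_n\downarrow 0$ satisfies $Q_n\stackrel{(o)}{\longrightarrow}0$, hence $Q_n\stackrel{t(\mathcal{M})}{\longrightarrow}0$, and by Remark~\ref{r_(o)=tl} this forces $\mathcal{M}$ to be finite. It then remains to show $\mathcal{M}$ is atomic, which I would do by contradiction, producing a ``typewriter'' sequence that $t(\mathcal{M})$-converges but does not $(o)$-converge. If $\mathcal{M}$ is finite but not atomic, there is a nonzero projection majorizing no atom; compressing by it (using Proposition~\ref{p_tl_ind} for the topology, and the fact that compressing dominating nets $eR_ne\downarrow 0$ preserves $(o)$-convergence) I may assume $\mathcal{M}$ is diffuse with $\tau(I)<\infty$. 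A diffuse finite algebra contains a diffuse masa $\mathcal{A}\cong L_\infty(\Omega,\nu)$, $\nu(\Omega)<\infty$; choose projections $\chi_{E_n}\in\mathcal{A}$ with $\nu(E_n)=\tau(\chi_{E_n})\to 0$ but $\bigcup_{m\geqslant n}E_m=\Omega$ for every $n$. Since $\mathcal{M}$ is finite, Proposition~\ref{plm-spk1}$(i)$ (with $Z_n=I$) reduces $\chi_{E_n}\stackrel{t(\mathcal{M})}{\longrightarrow}0$ to $\mathcal{D}(\chi_{E_n})=\Phi(\chi_{E_n})\stackrel{t(\mathcal{Z}(\mathcal{M}))}{\longrightarrow}0$, where $\Phi$ is the center-valued trace; and $\int_\Omega\Phi(\chi_{E_n})\,d\mu=\tau(\chi_{E_n})\to 0$ gives $L_1$- and hence measure-convergence on the center, so indeed $\chi_{E_n}\stackrel{t(\mathcal{M})}{\longrightarrow}0$.

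The remaining point, and the one I expect to be the main obstacle, is to show that $\{\chi_{E_n}\}$ does \emph{not} $(o)$-converge to $0$. Here the danger is that a dominating net $R_n\downarrow 0$ with $\chi_{E_n}\leqslant R_n$ need not lie in (nor commute with) $\mathcal{A}$, so one cannot simply work inside $L_0(\Omega,\nu)$. The clean way when the diffuse part is central is to take the $\chi_{E_n}$ central: then in the central disintegration $R_n\geqslant\chi_{E_n}$ means $R_n(\omega)\geqslant 1$ on $\{\chi_{E_n}=1\}$, so any decreasing dominator satisfies $R_n\geqslant\sup_{m\geqslant n}\chi_{E_m}=I$, contradicting $R_n\downarrow 0$. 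In the genuinely noncommutative (type $II_1$) case this fiberwise shortcut fails, and I would instead use the $\tau$-preserving conditional expectation $\mathbb{E}\colon\mathcal{M}\to\mathcal{A}$: applying its extension to $S(\mathcal{M},\tau)$ gives $\chi_{E_n}=\mathbb{E}(\chi_{E_n})\leqslant\mathbb{E}(R_n)$, and if $\mathbb{E}(R_n)\downarrow 0$ in $\mathcal{A}=L_0(\Omega,\nu)$ then $\chi_{E_n}\to 0$ $\nu$-a.e., contradicting that the tails $\bigcup_{m\geqslant n}E_m$ are all of $\Omega$. The crux is therefore the \emph{order-continuity (normality) of the extended conditional expectation on the decreasing net $R_n\downarrow 0$ in $S_+(\mathcal{M},\tau)$}, i.e. $\mathbb{E}(R_n)\downarrow 0$; this is where the unboundedness of the dominators must be handled, and it is the step I would need to pin down with care (reducing to $\tau(I)<\infty$ via the compression above makes all dominators $\tau$-measurable, which is what makes the standard normality of $\mathbb{E}$ on $S(\mathcal{M},\tau)$ applicable).
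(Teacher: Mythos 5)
Your $(i)\Rightarrow(ii)$ argument is correct and is essentially the paper's: finiteness gives ``$(o)\Rightarrow t(\mathcal{M})$'' via Theorem~\ref{t_(o)=tl}, and in the product model $\prod_j M_{n_j}$ the scalar dominators $c_{n,j}1_{n_j}$ give the converse (the paper phrases this as coordinatewise $(o)$-convergence via $|T_n^{(j)}|\leqslant\|T_n^{(j)}\|\,I_j$). Likewise your derivation of finiteness in $(ii)\Rightarrow(i)$ from Remark~\ref{r_(o)=tl}, the reduction to a diffuse (type $II_1$) corner with $\tau(P)<\infty$, the construction of a typewriter sequence in a masa, and the verification that it is $t(\mathcal{M})$-null all match the paper. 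The problem is exactly the step you flag as the crux: you need that a decreasing sequence $R_n\downarrow 0$ in $S_+(\mathcal{M},\tau)$ satisfies $\mathbb{E}(R_n)\downarrow 0$ for the trace-preserving conditional expectation onto the masa. As a general lemma this is \emph{false} for unbounded dominators: already for $\mathcal{M}=L_\infty([0,1]^2)$, $\mathcal{A}=L_\infty([0,1]_x)$ and $R_n(x,y)=\max(1/y-n,0)$ one has $R_n\downarrow 0$ in $L_0$ while $\mathbb{E}(R_n)\equiv+\infty$ for every $n$. The extended expectation is order-continuous on increasing nets, not on decreasing nets of unbounded operators, so your contradiction does not get off the ground without first taming the dominators; and once you compress by a spectral projection of $S_1$ to make them bounded, the compressed operators no longer dominate elements of $\mathcal{A}$ and the expectation trick no longer applies. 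So the gap is real and is not a routine verification.

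The paper closes this gap by a different mechanism, and it is worth noting the two ingredients it uses that your sketch lacks. First, it does not take the bare typewriter projections but the \emph{scaled} operators $T=nQ_n^{(k)}$ (with $\tau(Q_n^{(k)})=1/n$), which are still $t_\tau$-null but quantitatively much harder to dominate. Second, given dominators $S_n\downarrow 0$ with $0\leqslant nQ_n^{(k)}\leqslant S_n$, it truncates spectrally: it picks $E=PE_{n_0}(S_1)$ with $\tau(E^\bot)<\tfrac12$ so that $L_n=ES_nE\leqslant n_0E$ is a bounded decreasing sequence with $\tau(L_n)\downarrow 0$, then a second spectral cut $G=PE_\varepsilon(L_{n_1})$ with $\tau(G^\bot)<\tfrac12$ gives $\|Q_n^{(k)}EG\|_{\mathcal{M}}\leqslant\sqrt{\varepsilon/n}$; finally a unit vector $\xi\in(E\wedge G)(\mathcal{H})$ (nonzero by the trace count) yields $1=\sum_{k=1}^n\|Q_n^{(k)}\xi\|^2\leqslant\varepsilon<1$. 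This vector-state computation is what replaces your appeal to normality of $\mathbb{E}$; without the $\sqrt{n}$-type scaling the final sum would only be bounded by $n\varepsilon$ and no contradiction would result, which is a second reason the unscaled typewriter sequence alone may not suffice (it is not even clear that it fails to $(o)$-converge, since an upper bound of noncommuting projections need not dominate their lattice join). To repair your proof you would either have to establish the order-continuity of $\mathbb{E}$ after a preliminary spectral truncation compatible with $\mathcal{A}$, or adopt the paper's scaling-plus-truncation argument.
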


\begin{proof}
  $(i)\Rightarrow(ii)$. Since $\mathcal{M}$ is a finite von
  Neumann algebra, it follows from $T_n
  \stackrel{(o)}{\longrightarrow}0$ that $T_n
  \stackrel{t(\mathcal{M})}{\longrightarrow}0$ by
  Theorem~\ref{t_(o)=tl}. Since $\mathcal{M}$ is atomic, we have
  $\mathcal{M}=C^*-\prod\limits_{j\in J}\mathcal{B}(\mathcal{H}_j)$.
  If $T_n=\{T_n^{(j)}\}_{j\in J}$, $T_n^{(j)}\in
  \mathcal{B}(\mathcal{}H_j)$, and $T_n
  \stackrel{t(\mathcal{M})}{\longrightarrow}0$, then
  $\|T_n^{(j)}\|_{\mathcal{B}(\mathcal{H}_j)}\rightarrow 0$ as $n\to
  \infty$ for all $j\in J$. Since $|T_n^{(j)}|\leqslant
  \|T_n^{(j)}\|_{\mathcal{B}(\mathcal{H}_j)}\cdot
  I_{\mathcal{B}(\mathcal{H}_j)}$, it follows that
  $\{T_n^{(j)}\}_{n=1}^\infty$ $(o)$-converges to zero in
  $\mathcal{B}(\mathcal{H}_j)$ and, hence, $T_n
  \stackrel{(o)}{\longrightarrow}0$.

  $(ii)\Rightarrow(i)$. It follows from Remark~\ref{t_(o)=tl} that
  $\mathcal{M}$ is finite. Identify the center
  $\mathcal{Z}(\mathcal{M})$ with $L_\infty(\Omega,\Sigma,\mu)$. By
  condition~$(ii)$, any sequence in $L_0(\Omega,\Sigma,\mu)$ that
  $\mu$-almost everywhere converges is convergent in the topology $t(\mathcal{M})$.
   Consequently, $\mathcal{Z}(\mathcal{M})$ is an atomic von
  Neumann algebra and, hence, $\mathcal{M}=C^*-\prod\limits_{j\in J}
  \mathcal{M}_j$, where $\mathcal{M}_j$ are finite factors of types
  $I$ or $II$. If there is an index $j_0\in J$ for which
  $\mathcal{M}_{j_0}$ is of type $II$, then there exists a nonzero
  projection $P\in \mathcal{P}(\mathcal{M})$, $\tau(P)=1$, such that
  $P\mathcal{M}P$ is of type $II_1$.

  Let $\mathcal{A}$ be a maximal commutative $*$-subalgebra of
  $P\mathcal{M}P$. Then $\mathcal{A}$ has no atoms and there is a
  collection $Q_n^{(k)}$, $1\leqslant k \leqslant n$, of pairwise
  orthogonal projections in $\mathcal{P}(\mathcal{A})$ such that
  $\sup\limits_{1\leqslant k\leqslant n}Q_n^{(k)}=P$ and
  $\tau(Q_n^{(k)})=\frac{1}{n}, \ k=1,2,\ldots, n$. Set $X^{(k)}_n=n
  Q_n^{(k)}$, $k=1,2,\ldots,n$, and index the operators $X_n^{(k)}$
  by setting $T_1=X_1^{(1)}, \ T_2=X_2^{(1)}, \ T_3=X_2^{(2)},
  \ldots$ It is clear that $T_n \stackrel{t_\tau}{\longrightarrow}0$
  and, by condition~$(ii)$, $T_n \stackrel{(o)}{\longrightarrow}0$
  in $LS_h(P\mathcal{M}P)$, that is, there exists a sequence
  $\{S_n\}_{n=1}^\infty \subset LS_+(P\mathcal{M}P)$ such that
  $S_n\downarrow 0 $ and $0\leqslant T_n\leqslant S_n$,
  $n=1,2,\ldots$

  Since $E_n^\bot(S_1)\in P\mathcal{M}P$ and
  $E_n^\bot(S_1)\downarrow 0$, there is an index $n_0$ such that
  $\tau(E_{n_0}^\bot(S_1))<\frac{1}{2}$. Set $E=PE_{n_0}(S_1)$ and
  $L_n=ES_n E$. It is clear that $\frac{1}{2}<\tau(E)\leqslant 1$,
  $0\leqslant L_n \leqslant n_0 E$, $L_n\downarrow 0$, and
  $0\leqslant E T_n E \leqslant L_n$, $n=1,2,\ldots$

  Since $\tau(L_n)\downarrow 0$, the inequality
  $\tau(E^\bot_\varepsilon(L_n))\leqslant
  \frac{1}{\varepsilon}\tau(L_n)$ implies that
  $\tau(E^\bot_\varepsilon(L_n))\rightarrow 0$ for all
  $\varepsilon>0$.

  Fix $\varepsilon\in (0,1)$ and choose an index $n_1$ such that
  $\tau(E^\bot_\varepsilon(L_{n_1}))<\frac{1}{2}$. For the
  projection $G=PE_\varepsilon(L_{n_1})$, we have that $GET_n
  EG\leqslant GL_n G\leqslant GL_{n_1}G\leqslant \varepsilon G$ for
  all $n\geqslant n_1$, that is,
  $\|Q_n^{(k)}EG\|_{\mathcal{M}}\leqslant
  \sqrt{\frac{\varepsilon}{n}}$ for any $n\geqslant n_1$,
  $k=1,2,\ldots, n$. If $E\wedge G= 0$, then $1=\tau(P)\geqslant
  \tau(E \vee G)=\tau(E)+\tau(G)>1$. Consequently, $E\wedge G\neq
  0$, so that there exists a vector $\xi\in (E\wedge
  G)(\mathcal{H})\subset P(\mathcal{H})$ with
  $\|\xi\|_{\mathcal{H}}=1$, where $\|\cdot\|_{\mathcal{H}}$ is the
  norm on the Hilbert space $\mathcal{H}$ on which the von Neumann
  algebra $\mathcal{M}$ acts. For each $n\geqslant n_1$, we have
  that
  $$
  1=\|P\xi\|^2_{\mathcal{H}}=\sum
  \limits_{k=1}^n\|Q_n^{(k)}\xi\|^2_{\mathcal{H}}=
  \sum\limits_{k=1}^n\|(Q_n^{(k)}EG)\xi\|^2_{\mathcal{H}}\leqslant
  \frac{\varepsilon}{n}+\frac{\varepsilon}{n}+
  \ldots+\frac{\varepsilon}{n}=\varepsilon<1.
  $$
  This contradiction shows that the sequence $\{T_n\}_{n=1}^\infty$
  can not be \hbox{$(o)$-con}vergent to zero in
  $LS_h(P\mathcal{M}P)$.  Hence, all the factors $\mathcal{M}_j$,
  $j\in J$, are of type $I$, that is, $\mathcal{M}$ is an atomic von
  Neumann algebra.
\end{proof}

\begin{corr}\label{c_M-atom}
  The following conditions are equivalent.
  \begin{itemize}
  \item [$(i)$]  Any $t_{w\tau l}$-convergent sequence in
    $S_h(\mathcal{M},\tau)$ is $(o)$-convergent.
  \item [$(ii)$] Any $t_{\tau l}$-convergent sequence in
    $S_h(\mathcal{M},\tau)$ is $(o)$-convergent.
  \item [$(iii)$] Any $t_{\tau }$-convergent sequence in
    $S_h(\mathcal{M},\tau)$ is $(o)$-convergent.
  \item [$(iv)$] $\mathcal{M}$ is an atomic von Neumann algebra and
    $\tau(I)<\infty$.
  \end{itemize}
\end{corr}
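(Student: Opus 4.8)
The plan is to prove the cycle $(iv)\Rightarrow(i)\Rightarrow(ii)\Rightarrow(iii)\Rightarrow(iv)$. The implications $(i)\Rightarrow(ii)\Rightarrow(iii)$ require no work: since $t_{w\tau l}\leqslant t_{\tau l}\leqslant t_\tau$, a finer topology has fewer convergent sequences, so every $t_\tau$-convergent sequence is $t_{\tau l}$-convergent and every $t_{\tau l}$-convergent sequence is $t_{w\tau l}$-convergent. Thus the hypothesis that \emph{every} $t_{w\tau l}$-convergent sequence be $(o)$-convergent is the strongest of the three, and it specializes downward to the smaller classes.

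For $(iv)\Rightarrow(i)$ I would use the collapse of all the relevant topologies in the finite-trace case. If $\tau(I)<\infty$ then $\mathcal{M}$ is finite, $S(\mathcal{M},\tau)=S(\mathcal{M})=LS(\mathcal{M})$, and the three trace topologies $t_{w\tau l}$, $t_{\tau l}$, $t_\tau$ all coincide and agree with $t(\mathcal{M})$ on $S(\mathcal{M},\tau)$. Since $\mathcal{M}$ is assumed atomic and finite, Proposition~\ref{p_tm_conat} applies and tells us that for sequences in $LS_h(\mathcal{M})$ the relations $T_n\xrightarrow{t(\mathcal{M})}0$ and $T_n\xrightarrow{(o)}0$ are equivalent. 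In particular any $t_{w\tau l}$-convergent sequence, being $t(\mathcal{M})$-convergent, is $(o)$-convergent, which is exactly $(i)$.

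The substantive step is $(iii)\Rightarrow(iv)$, which I would split into showing $\tau(I)<\infty$ first and then atomicity. To force $\tau(I)<\infty$, I would argue by contradiction: assuming $\tau(I)=\infty$, use semifiniteness to select pairwise orthogonal projections $P_n$ with $\tau(P_n)\to 0$ but $\sum_n\tau(P_n)=\infty$, together with scalars $c_n\uparrow\infty$, and consider the positive sequence $\{c_nP_n\}\subset\mathcal{M}\subset S_h(\mathcal{M},\tau)$. Choosing $Q=P_n^{\bot}$ in the definition of the $t_\tau$-neighborhoods shows $c_nP_n\xrightarrow{t_\tau}0$, whereas $\sup_n c_nP_n=\sum_n c_nP_n$ has $\tau\bigl(E_\lambda^{\bot}(\sum_n c_nP_n)\bigr)=\sum_{c_n>\lambda}\tau(P_n)=\infty$ for every $\lambda$, so this supremum lies outside $S(\mathcal{M},\tau)$; hence $\{c_nP_n\}$ admits no upper bound in $S_+(\mathcal{M},\tau)$, and therefore cannot be $(o)$-convergent, contradicting $(iii)$. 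Once $\tau(I)<\infty$ is in hand, $S_h(\mathcal{M},\tau)=LS_h(\mathcal{M})$, $t_\tau=t(\mathcal{M})$, and $\mathcal{M}$ is finite, so $(iii)$ says precisely that $t(\mathcal{M})$-convergence implies $(o)$-convergence for sequences; the converse follows from Theorem~\ref{t_(o)=tl} (finiteness gives $t_h(\mathcal{M})\leqslant t_o(\mathcal{M})$). The two notions of convergence thus coincide, and Proposition~\ref{p_tm_conat} then forces $\mathcal{M}$ to be atomic, completing $(iv)$.

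I expect the main obstacle to be the first half of $(iii)\Rightarrow(iv)$, namely the production of the orthogonal projections $P_n$ of small trace with divergent total trace and the verification that the resulting scaled sequence fails to be order bounded in $S_h(\mathcal{M},\tau)$; this is exactly the point where the semifinite structure of $\tau$ and the order-theoretic behavior of $S(\mathcal{M},\tau)$ interact, and it is where one must be careful, since the analogous construction used for the weaker topologies $t_{\tau l}$ and $t_{w\tau l}$ in Theorem~\ref{t_th=tot} relied on projections of trace bounded \emph{below}. The remaining implications are essentially bookkeeping built on Proposition~\ref{p_(o)=ttau}, Theorem~\ref{t_(o)=tl}, and Proposition~\ref{p_tm_conat}.
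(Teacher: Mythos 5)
Your handling of $(i)\Rightarrow(ii)\Rightarrow(iii)$ and of $(iv)\Rightarrow(i)$ matches the paper: the first is the bookkeeping with $t_{w\tau l}\leqslant t_{\tau l}\leqslant t_\tau$, and the second combines the collapse of the trace topologies when $\tau(I)<\infty$ (the paper cites Theorem~\ref{t_wtl=tl}) with Proposition~\ref{p_tm_conat}. The genuine gap is in the first half of $(iii)\Rightarrow(iv)$, precisely where you flagged your doubt. Semifiniteness of $\tau$ with $\tau(I)=\infty$ gives pairwise orthogonal projections of \emph{finite} trace with divergent total trace, but it does not give projections with $\tau(P_n)\to 0$: in $\mathcal{M}=\mathcal{B}(\mathcal{H})$ with $\tau=tr$ every nonzero projection has trace at least $1$, so the sequence $c_nP_n$ you want cannot be built. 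Moreover this is not a repairable defect of the construction: for $(\mathcal{B}(\mathcal{H}),tr)$ with $\dim\mathcal{H}=\infty$ the topology $t_\tau$ is the uniform topology, and every norm-convergent sequence $T_n\to T$ in $\mathcal{B}_h(\mathcal{H})$ is $(o)$-convergent (sandwich it between $T\pm\delta_nI$ with $\delta_n=\sup_{m\geqslant n}\|T_m-T\|_{\mathcal{M}}\downarrow 0$), so condition $(iii)$ as literally stated holds there although $\tau(I)=\infty$. There is also a secondary flaw: even where your projections exist, you pass from ``$\sum_nc_nP_n\notin S(\mathcal{M},\tau)$'' to ``$\{c_nP_n\}$ has no upper bound in $S_+(\mathcal{M},\tau)$'', but an upper bound $S$ of all the $c_nP_n$ need not dominate the finite partial sums: already for two orthogonal rank-one projections $P_1,P_2$ in $M_2(\mathbf{C})$ one can choose a positive $S$ with $P_1\leqslant S$ and $P_2\leqslant S$ but $P_1+P_2\not\leqslant S$, so identifying $\sup_nc_nP_n$ with $\sum_nc_nP_n$ requires an argument you have not supplied.

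For comparison, the paper proves $(iii)\Rightarrow(iv)$ by an entirely different, non-constructive route: it asserts that metrizability of $t_\tau$ together with Remark~\ref{r_por_1} yields $t_{h\tau}=t_{o\tau}(\mathcal{M})$, whence $\tau(I)<\infty$ by Proposition~\ref{p_(o)=ttau}; atomicity then follows from Proposition~\ref{p_tm_conat} exactly as in your second half, which is fine. Be aware, though, that Remark~\ref{r_por_1} supplies only the inequality $t_{o\tau}(\mathcal{M})\leqslant t_{h\tau}$, and hypothesis $(iii)$ pushes in the same direction (it shows that $t_\tau$-convergent sequences are $t_{o\tau}(\mathcal{M})$-convergent), so the reverse inequality needed to invoke Proposition~\ref{p_(o)=ttau} is not really established there either; the $\mathcal{B}(\mathcal{H})$ example above is the symptom of this. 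The statement only becomes provable if $(i)$--$(iii)$ are read two-sidedly, i.e.\ as asserting that the class of topologically convergent sequences \emph{coincides} with the class of $(o)$-convergent sequences, as in Proposition~\ref{p_tm_conat}$(ii)$; under that reading the one direction you would still owe is that $(o)$-convergence of sequences forces $t_\tau$-convergence, which is what actually drives $\tau(I)<\infty$.
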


\begin{proof}
  The implications $(i)\Rightarrow (ii)\Rightarrow (iii)$ follow
  from the inequalities $t_{w\tau l}\leqslant t_{\tau l}\leqslant
  t_{\tau }$.

  $(iii)\Rightarrow (iv)$. Since the topology $t_\tau$ is
  metrizable, it follows from Remark~\ref{r_por_1} that
  $t_{h\tau}=t_{o\tau}$ and, hence, $\tau(I)<\infty$ by
  Proposition~\ref{p_(o)=ttau}, in particular,
  $t_\tau=t(\mathcal{M})$. It remains to apply
  Proposition~\ref{p_tm_conat}.

  The implication $(iv)\Rightarrow (i)$ follows from
  Theorem~\ref{t_wtl=tl} and Proposition~\ref{p_tm_conat}.
\end{proof}

\begin{note}\label{r_neosh}
  It was shown in the proof of the implication $(ii)\Rightarrow(i)$
  in Proposition~\ref{p_tm_conat} that for a non-atomic von Neumann
  algebra $\mathcal{M}$ with a faithful normal trace $\tau$ there
  always exists a sequence $\{E_n\}_{n=1}^\infty$ of pairwise
  commuting projections in $\mathcal{P}(\mathcal{M})$ such that $E_n
  \stackrel{t_\tau}{\longrightarrow}0$, however,
  $\{E_n\}_{n=1}^\infty$ does not $(o)$-converge in
  $S_h(\mathcal{M},\tau)$.
\end{note}

\medskip

\bigskip
\noindent\begin{tabular}{ll} Vladimir Chilin &  \\
National University of Uzbekistan, &
\\Tashkent, 100174, Republica of Uzbekistan. \hspace{0.2cm} &
\\E-mail: chilin@ucd.uz &
\end{tabular}

\bigskip
\noindent\begin{tabular}{ll} Mustafa Muratov &  \\
Taurida National University, &
\\Simferopol, 95007, Ukraine.  \hspace{0.2cm} &
\\E-mail: mustafa-muratov@mail.ru &
\end{tabular}

\end{document}